\newcommand{\indep}{\perp \!\!\! \perp}
\numberwithin{equation}{section}
\newtheorem{theorem}{Theorem}[section]
\newtheorem{lemma}[theorem]{Lemma}
\newtheorem{proposition}[theorem]{Proposition}
\theoremstyle{definition}
\newtheorem{definition}[theorem]{Definition}
\newtheorem{remark}[theorem]{Remark}
\newenvironment{assumption}[1]
  {\innercustomthm}
  {\endinnercustomthm}
\def\E{{\mathbb E}}
\def\R{{\mathbb R}}
\def\N{{\mathbb N}}
\def\FF{{\mathbb F}}
\def\GG{{\mathbb G}}
\def\HH{{\mathbb H}}
\def\PP{{\mathbb P}}
\def\QQ{{\mathbb Q}}
\def\P{{\mathcal P}}
\def\V{{\mathcal V}}
\def\M{{\mathcal M}}
\def\H{{\mathcal H}}
\def\X{{\mathcal X}}
\def\L{{\mathcal L}}
\def\G{{\mathcal G}}
\def\S{{\mathcal S}}
\def\W{{\mathcal W}}
\def\A{{\mathcal A}}
\def\F{{\mathcal F}}
\def\C{{\mathcal C}}
\def\CP{{C([0,T];\P(\R^d))}}
\def\CE{{C([0,T];E)}}
\title[Closed-loop convergence  for mean field games with common noise]{Closed-loop convergence  for mean field games with common noise}
\author{Daniel Lacker and Luc Le Flem}
\thanks{This work was partially supported by the Air Force Office of Scientific Research Grant FA9550-19-1-0291.}
\address{Department of Industrial Engineering \& Operations Research, Columbia University}
\email{daniel.lacker@columbia.edu, ll3240@columbia.edu}
\begin{document}

\begin{abstract}
This paper studies the convergence problem for mean field games with common noise. We define a suitable notion of weak mean field equilibria, which we prove captures all subsequential limit points, as $n\to\infty$, of closed-loop approximate equilibria from the corresponding $n$-player games. 
This extends to the common noise setting a recent result of the first author, while also simplifying a key step in the proof and allowing unbounded coefficients and non-i.i.d.\ initial conditions.
Conversely, we show that every weak mean field equilibrium arises as the limit of some sequence of approximate equilibria for the $n$-player games, as long as the latter are formulated over a broader class of closed-loop strategies which may depend on an additional common signal.
\end{abstract}

\maketitle

\section{Introduction}

We consider $n$-player stochastic differential games of mean field type. The state processes $\bm{X}^n = (X^{n,1}, \ldots, X^{n,n})$ are governed by the stochastic differential equation (SDE) system 
\begin{align}
dX^{n,i}_t &= b(t,X^{n,i}_t,\mu^n_t,\alpha^i(t,\bm{X}^n_t))dt + \sigma dW^i_t + \gamma dB_t, \quad\quad \mu^n_t = \frac{1}{n}\sum_{k=1}^n\delta_{X^{n,k}_t}. \label{def:intro_SDE}
\end{align}
Here $B, W^1,\ldots,W^n$ are independent Brownian motions and $X^{n,1}_0,\ldots,X^{n,n}_0$ are given.
We mostly omit specific assumptions in this introduction but highlight that $\sigma$ must be non-degenerate. As usual, we refer to $W^1,\ldots,W^n$ as \emph{idiosyncratic noises}, and $B$ is the \emph{common noise}.
The empirical measure process $\mu^n$ encapsulates the aggregate behavior of the $n$ players, and it is the large-$n$ behavior of $\mu^n$, in equilibrium, which will be the primary focus of this paper.

The control $\alpha^i = \alpha^i(t,\bm{X}^n_t)$ chosen by player $i$ can be any measurable function of the time and current state processes of all players, taking values in a compact space $A$. We focus for now on \emph{Markovian} controls and will discuss important generalizations in Section \ref{se:summary}. Each player $i=1, \ldots, n$ aims to maximize their personal reward
\begin{align*}
J_i^n(\alpha^1, \ldots, \alpha^n) = \E\left[\int_0^T f(t, X^{n,i}_t, \mu^n_t, \alpha^i(t,\bm{X}^n_t))dt + g(X^{n,i}_T, \mu^n_T)\right].
\end{align*}
For $\epsilon \ge 0$, we work with the usual notion of $\epsilon$-Nash equilibrium, which is defined as a vector of controls $(\alpha^1, \ldots, \alpha^n)$ such that, for any other control $\beta$ and any player $i\in\{1, \ldots n \}$,
\begin{align*}
J_i^n(\alpha^1, \ldots, \alpha^n) \geq J_i^n(\alpha^1, \ldots, \alpha^{i-1}, \beta, \alpha^{i+1}, \ldots, \alpha^{n}) - \epsilon.
\end{align*}
In other words, player $i$ can do no better than $\alpha^i$, with $\epsilon$-precision, if all other players maintain their choices $(\alpha^j)_{j \neq i}$.
In this paper, we mostly take for granted the existence of equilibria, focusing instead on their large-$n$ behavior when they do exist, but see \cite[Section I.2.1]{carmona-delarue-book} for a general discussion and some existence theorems for $n$-player stochastic differential games.

Mean field game (MFG) theory arose in the series of papers \cite{lasrylions, lasry2006jeux1,lasry2006jeux2, huang2006large, huang2007large} as a systematic framework for identifying \emph{continuum limits} of stochastic games of the above form, which are often simpler to analyze.
There is now a rich analytic and probabilistic literature on the theory, which has found widespread applications; see \cite{carmona-delarue-book} for a comprehensive recent overview. 
When there is no common noise ($\gamma=0$), the continuum MFG model can be formulated as a fixed point problem on the space $C([0,T];\P(\R^d))$ of measure flows.
When common noise is present, the natural fixed point problem is instead over \emph{stochastic} $B$-adapted measure flows.
A mean field equilibrium (MFE), loosely speaking, is defined as a $\P(\R^d)$-valued process $\mu=(\mu_t)_{t \in [0,T]}$, adapted to the filtration $\FF^B=(\F^B_t)_{t \in [0,T]}$ generated by the common noise $B$, such that if $(X^*_t)_{t \in [0,T]}$ denotes the optimal state process of the stochastic optimal control problem
\begin{align}
\begin{split}
\sup_{\alpha} \ & \E\left[\int_0^Tf(t,X_t,\mu_t,\alpha_t)dt + g(X_T,\mu_T)\right], \\
dX_t & = b(t,X_t,\mu_t,\alpha_t)dt + \sigma dW_t + \gamma dB_t,
\end{split} \label{intro:MFE}
\end{align}
then the consistency condition $\mu_t = \mathrm{Law}(X^*_t\,|\,\F^B_t)$ holds for all $t$. 

There are by now many applications of MFG models with common noise, particularly in economics under the name of \emph{heterogeneous agent models} (with \emph{aggregate shocks}); see \cite{achdou2014partial,achdou2014heterogeneous,ahn2018inequality} and references therein.
But the theoretical literature on MFGs with common noise remained relatively limited until recent years.
Typically, the above fixed point problem is recast in terms of a forward-backward stochastic partial differential equation (SPDE) or a forward-backward conditional McKean-Vlasov equation. 
These systems have been successfully analyzed under the Lasry-Lions monotonicity condition \cite{chassagneux2014probabilistic,cardaliaguet-delarue-lasry-lions,cardaliaguet2020first}, and a so-called \emph{weak monotonicity condition} \cite{ahuja2016wellposedness,ahuja2019forward}.
In both cases, monotonicity plays an important structural role in ensuring existence and uniqueness, without which one is guaranteed uniqueness only on a small time horizon.
The other main approach to construct equilibria for MFGs with common noise is based on probabilistic compactness arguments, initiated in \cite{carmona-delarue-lacker}, and see also \cite[Volume II, Chapter 3]{carmona-delarue-book}.
This approach to existence theorems was extended in several directions, including controlled diffusion coefficients \cite{barrasso2020controlled}, interactions involving the controls (also known as \emph{extended} MFGs) \cite{djete2020mean}, and models with absorption \cite{burzoni-campi}.
The idea is to analyze the fixed point problem \eqref{intro:MFE} by discretizing the common noise (for compactness purposes) and then taking weak limits. In general, one can only expect to find \emph{weak mean field equilibria} (defined in analogy with weak solutions of SDEs), in which $\mu$ is not necessarily $B$-measurable, and the consistency condition is instead $\mu_t=\mathrm{Law}(X^*_t\,|\,\F^{B,\mu}_t)$. In contrast, a \emph{strong} MFE is one in which $\mu$ is $B$-measurable.

The advantage of this compactness approach is that it works in very general settings, notably avoiding monotonicity assumptions. 
The advantage of the SPDE and McKean-Vlasov approaches (as well as the master equation, discussed later) is that they tend to provide \emph{strong} MFE, and much more information about them (regularity, etc.).
Some other techniques exist: Common noise models with a special structure of \emph{translation-invariance} permit a reduction to the case without common noise \cite{lacker-webster}.
See also \cite{dianetti2019submodular} for a construction of strong MFE for so-called \emph{submodular} models, via order-theoretic rather than topological methods.

\subsection{The convergence problem}
This paper is devoted to what has come to be known as \emph{the convergence problem}, which is to rigorously establish that the equilibria of the $n$-player game  converge to the MFG, thereby justifying the MFG as the correct limiting model. More precisely, suppose we are given for each $n$ an $\epsilon_n$-Nash equilibrium $(\alpha^{n,1},\ldots,\alpha^{n,n})$ for the $n$-player game, where $\epsilon_n\to 0$. Let $\mu^n$ denote the empirical measure process, as in \eqref{def:intro_SDE}.
The key question is if $\mu^n$ converges to the MFE in some sense.

The convergence problem has seen significant developments in recent years. It can be divided into two categories, corresponding to the choice of open-loop and closed-loop controls for the $n$-player game. In open-loop equilibria, each player  chooses a control as an adapted process on some given filtered probability space. In the closed-loop setting, which is the focus of this paper, controls are specified as functions of the states. Unlike in (one-player) stochastic control problems, the open-loop and closed-loop equilibria are typically distinct; see \cite[Volume I, Section 2.1.2]{carmona-delarue-book} for a careful explanation and \cite{carmona-fouque-sun} for an instructive, explicitly solvable example.

The convergence problem for open-loop MFGs has been well understood since \cite{lacker2016general} (see also \cite{fischer2017connection}), which proved a general result in the common noise setting using weak convergence and compactness techniques. Notably, no monotonicity or uniqueness of mean field equilibria are needed here. More recent progress appeared in \cite{djete2020mean}, treating mean field games of controls with common noise.
Similar techniques have resolved the open-loop convergence problem for other kinds of MFG models, such as discrete-time models \cite{acciaio2020cournot}, first-order (noiseless) models \cite{fischer2019asymptotic}, and  correlated equilibria \cite{campi2020correlated}, as well as for cooperative (mean field control) models in great generality \cite{lacker2017limit,djete2020mckean,djete2020extended}. A new approach was developed recently in \cite{lauriere2019backward,LauriereTangpi} based on propagation of chaos for mean field (F)BSDEs.

Closed-loop equilibria are often considered more realistic, with players able to react to each other, but their analysis is significantly more complicated than the open-loop case. It was shown in \cite{cardaliaguet-delarue-lasry-lions} how to solve the convergence problem if one has access to a sufficiently smooth solution of the master equation, an infinite-dimensional PDE system, by using it to constructing approximate solutions of the $n$-dimensional  PDE system associated with the Nash equilibrium of the $n$-player game. This powerful idea was extended in \cite{DelarueLackerRamananCLT,DelarueLackerRamananLDP} to derive a central limit theorem, large deviations, and non-asymptotic concentration bounds for $\mu^n$. This versatile approach has been adapted to models with finite state space \cite{cecchin2019convergence,bayraktar2017analysis}, a major player \cite{cardaliaguet2018remarks}, local couplings \cite{cardaliaguet2017convergence}, and graph-based interactions \cite{delarue-ERgraph}.
Proving well-posedness of the master equation, however, is a notoriously difficult task, especially when common noise is involved. 
There has been great progress in recent years, beginning with the groundbreaking work \cite{cardaliaguet-delarue-lasry-lions} which heavily exploited the Lasry-Lions monotonicity condition. Well-posedness of the master equation has since been shown in several settings, 
including major player models \cite{cardaliaguet2018remarks}, finite state space \cite{bayraktar2021finite,bertucci2019some}, lower regularity \cite{mou2020wellposedness}, degenerate idiosyncratic noise \cite{cardaliaguet2020first}, and the recent paper \cite{gangbo2021mean} using the alternative \emph{weak} (or \emph{displacement}) \emph{monotonicity}  concept originating in \cite{ahuja2016wellposedness}.

There is an important shortcoming of the master equation approach to the convergence problem, apart from the difficulty of constructing a solution: The  existence of a smooth enough classical solution of the master equation already implies its uniqueness; see \cite[Remark 4.5]{DelarueLackerRamananCLT}.
 This essentially limits the scope of the approach to situations in which the MFE is unique, which are well known to be atypical.
 These shortcomings were recently addressed by the first author \cite{Lacker_closedloop}, which adapts to the closed-loop setting the probabilistic compactness approach which was previously limited to the open-loop setting. This avoids any use of the master equation and, in the case of no common noise, solves the convergence problem for a much broader class of models than in \cite{cardaliaguet-delarue-lasry-lions}.
Notably, it covers not only Markovian equilibria but also path-dependent equilibria, though we refer to  \cite{Cardaliaguet2020counterexample} for a strong caveat regarding how general a result one can hope for: Essentially, if the $n$ players are able to observe each other's \emph{controls}, then they can punish deviations in the spirit of the Folk theorem, leading to a large set of $n$-player equilibria which are not approximable by the mean field limit; this mechanism is unavailable in our setting because of the non-degenerate idiosyncratic noise.

In another direction,  the recent work \cite{possamai2021non} introduces a quantitative stochastic approach to the closed-loop convergence problem for MFGs without common noise, based on propagation of chaos for BSDEs and the weak formulation of MFGs due to \cite{carmona-lacker}. Their assumptions are quite different from those required for the other approaches discussed above, though the framework is much closer in spirit to the master equation approach than the compactness approach. Notably, the results of \cite{possamai2021non} do not appear to cover many cases of non-unique MFE because of their key assumption 2.6(iii), which requires existence and uniqueness for a McKean-Vlasov BSDE that appears to be, in most cases, equivalent to the MFE itself.

Another interesting recent idea is to study the convergence problem via the \emph{set of approximate equilibrium values} \cite{iseri-zhang}. The paper \cite{iseri-zhang} embraces non-uniqueness and clarifies some connections between $n$-player and mean field information structures. In the continuous-time case \cite[Section 7]{iseri-zhang}, however, the analysis is limited so far to models without common noise and with symmetric equilibrium controls which are Lipschitz functions of the state process and empirical measure, with Lipschitz constant crucially uniform in the number of players $n$.  

\subsection{Summary of main results} \label{se:summary}

The first main result of the paper, Theorem \ref{th:mainlimit}, resolves the closed-loop convergence problem in a setting including common noise, non-i.i.d.\ initial conditions, and unbounded coefficients.
If the initial distribution $\mu^n_0$ converges, then we show that the sequence of empirical measure processes $(\mu^n_\cdot)_{n \in \N}$ associated to any Markovian approximate equilibria is tight, and every limit in distribution is a MFE in a suitable weak sense discussed below.
This gives the first convergence result for closed-loop equilibria with common noise which does not resort to the master equation, and it is thus not limited in scope to the unique MFE regime.
The arguments extend the probabilistic compactness approach of \cite{Lacker_closedloop}, but with non-trivial adaptations to cover the case of common noise. We also make a notable simplification in the key step of checking the optimality property, taking advantage of the recent superposition principle for SPDEs obtained in \cite{superposition_theorem}.
Along the way, we identify a natural MFE concept, which we call \emph{weak semi-Markov} MFE as it adapts the notion used in \cite{Lacker_closedloop} to the common noise setting. This notion involves controls of the form $\alpha(t,X_t,\mu,B)$, which can depend on the current value of the state process as well as the histories of the measure flow and common noise. It is simpler than the weak equilibrium concept proposed in \cite{carmona-delarue-lacker} (see also \cite[Volume II, Chapter 3]{carmona-delarue-book}), which requires a delicate \emph{compatibility} condition and a somewhat unnatural augmentation of the measure flow to include the conditional law of not only $X$ but rather $(X,\alpha,W)$. However, the latter notion of equilibrium turns out to be equivalent to our own, in a certain distributional sense; see Section \ref{se:connectionsWeakMFE}.

Our convergence theorem in fact holds for a broader class of $n$-player equilibria, beyond the Markovian case. Players' controls may depend only on the \emph{histories} of all state processes (as in \cite{Lacker_closedloop}), or more generally, also on an additional \emph{common signal} process 
 $S=(S_t)_{t \in [0,T]}$, seen by all players, which takes values in some Polish space. We call the latter \emph{$S$-closed-loop} controls. The common signal must be independent of the idiosyncratic noises and the initial states, but it may depend on (or even equal) the common noise.
We show in Proposition \ref{pr:np-eq-inclusion0} that any Markovian or history-dependent equilibrium is also an $S$-closed-loop equilibrium.

Our second main result, Theorem \ref{th:converselimit-strongMFE}, is a converse to the first: \emph{Every weak MFE} arises as the limit of a sequence of $S$-closed-loop $\epsilon_n$-Nash equilibria for the $n$-player games, for some $\epsilon_n\to 0$. This is known from \cite{lacker2016general} in the case where the $n$-player games are posed in the open-loop sense. In the closed-loop case, it was shown in \cite[Theorem 2.12]{Lacker_closedloop} that every \emph{strong MFE} arises as a limit of \emph{Markovian} approximate equilibria, but nothing was settled about weak MFE in general. It remains an open question, even in the case without common noise, whether or not all \emph{weak} MFE can be achieved as limits of \emph{Markovian} approximate equilibria of the $n$-player game. However, our Theorem \ref{th:converselimit-strongMFE} shows how to recover all weak MFE via a natural extension of the $n$-player closed-loop equilibrium concept.

We finally mention the important concurrent work \cite{Djete2021}, which appeared on arXiv shortly after the first version of the present paper. It uses similar compactness arguments to solve the closed-loop convergence problem for models with common noise and even covers extended MFGs for the first time. It introduces a notion of \emph{measure–valued MFG equilibrium}, which in the non-extended case is equivalent to our own notion of weak equilibrium \cite[Remark 2.8]{Djete2021}. Another major achievement of \cite{Djete2021} is to prove a converse result similar to our Theorem \ref{th:converselimit-strongMFE} but without resorting to the additional randomness built into our notion of $S$-closed-loop controls, instead constructing $n$-player equilibria over closed-loop path-dependent controls.
Several  assumptions made in \cite{Djete2021} are not needed in our paper, such as bounded data, Lipschitz drift, and most notably a nondegenerate common noise $\gamma > 0$.

\subsection{Outline of the paper}

The paper is organized as follows. Section \ref{se:mainresults} introduces the setup, equilibrium concepts, and the main results of the paper, including a summary of some key ideas of the proofs in Section \ref{se:ideas_of_main_proof}. Section \ref{se:estimates} summarizes some simple moment bounds that will recur throughout the paper. Section \ref{se:mainlimitproof} is devoted to the main line of the proof of the convergence theorem.  Section \ref{se:constructing_nash_eq} proves the converse result, showing that all weak MFE arise as limits of a class $n$-player equilibria. Finally, Section \ref{se:connectionsWeakMFE} connects our notion of MFE with those of previous work on common noise models.

\section{Main results}\label{se:mainresults}

We start this section by summarizing some notations and the main assumptions that will be in force throughout the paper. Sections \ref{se:nplayergame} and \ref{se:MFE} then introduce the $n$-player game and mean field game equilibrium concepts, respectively.   The main convergence result appears in Section \ref{se:mainlimit_sec}, and the converse in Section \ref{se:converse}. Lastly, Section \ref{se:ideas_of_main_proof} discusses key new ideas of the proof.

\subsection{Notation}
If $X$ is a random variable on a probability space $(\Omega, \F,\PP)$, we write $\L(X)$ or $\PP \circ X^{-1}$ for the law of $X$. We write $X \sim \lambda$ to mean that $\L(X) = \lambda$. For another random variable $Y$ defined on the same space, we denote by $\L(X|Y)$ a version of the conditional law of $X$ given $Y$, which is well defined up to a.s.\ equality if $X$ takes values in a Polish space \cite[Theorem 6.3]{kallenberg-foundations}.  We will also use the usual integral shorthand $\langle m,\varphi \rangle := \int \varphi \, dm$.
 
If $Z=(Z_t)_{t\in [0,T]}$ is an $E$-valued stochastic process defined on some space $\Omega$, we write $\FF^Z = (\F_t^Z)$ for the filtration it generates on $\Omega$. For multiple processes $Z^1,\ldots,Z^n$ defined on the same space, taking values in some Polish spaces, we similarly write $\FF^{Z^1,\ldots,Z^n}=(\F^{Z^1,\ldots,Z^n}_t)_{t \in [0,T]}$ for the filtration generated by the process $(Z^1_t,\ldots,Z^n_t)_{t \in [0,T]}$. For an $\R^d$-valued random variable $\xi$, we write $\FF^{\xi,Z^1,\ldots,Z^n}$ for the filtration $(\sigma(\xi) \vee \F^{Z^1,\ldots,Z^n}_t)_{t \in [0,T]}$.

For a given complete separable metric space $(E, \rho)$ and $T>0$, we will always equip the space $\CE$ of continuous functions from $[0, T]$ to $E$ with the sup-metric $(x, y) \mapsto \sup_{t\in[0, T]} \rho(x_t, y_t)$ for $x,y \in \CE$. If $E'$ is another Polish space, we identify the spaces $C([0,T];E \times E') \cong C([0,T];E) \times C([0,T];E')$. A function $h:[0, T] \times \CE \mapsto E'$ is called \emph{progressively measurable} if it is Borel measurable and satisfies $h(t, x) = h(t, y)$ for all $t\in[0, T]$ and $x,y \in \CE$ such that $x_s = y_s$ for all $s\in[0,t]$.
For the following generalization of progressively measurable functions, we adopt a more specialized terminology, as in \cite{Lacker_closedloop}:

\begin{definition} \label{def:semiMarkov}
A function $h: [0,T] \times \R^d \times \CE \to E'$ is said to be \emph{semi-Markov} if it is Borel measurable and satisfies $h(t,z,x)= h(t,z,y)$ for all $t\in[0, T]$, $z \in \R^d$, and $x,y \in \CE$ such that $x_s = y_s$ for all $s\in[0,t]$.
\end{definition}

Let $\P(E)$ be the space of Borel probability measure on $E$. For $r \geq 0$, let $\P^r(E)$ denote the set of  $m\in\P(E)$ that satisfy $\int_E \rho(x, x_0)^r m(dx) < \infty$ for some $x_0 \in E$, with the convention $\P^0(E):=\P(E)$. We endow $\P(E)$  with the weak convergence topology and its corresponding Borel $\sigma$-field. Define the $r$-Wasserstein distance between $\mu,\nu \in \P^r(E)$ as usual by
\begin{align*}
\W_r(\mu, \nu) := \left(\inf_\pi \int_{E \times E} \rho(x,y)^r \,\pi(dx, dy) \right)^{1/(1 \vee r)}, \quad r > 0,
\end{align*}
where the infimum is over all couplings $\pi$ of $(\mu,\nu)$. For the case $r=0$, define
\begin{align}
\W_0(\mu, \nu) := \inf_\pi \int_{E \times E} 1 \wedge \rho(x,y) \,\pi(dx, dy). \label{def:W0}
\end{align}
As shown in  \cite[Theorem 7.3 \& 7.12]{Villani2003}, $(\P^r(E),\W_r)$ defines a complete separable metric space.
For $r=0$, the metric $\W_0$ on $\P(E)$ is compatible with weak convergence.

For the special case $E=\R^d$, we will denote $\C^d := C([0, T]; \R^d)$ equipped with the supremum norm.
As usual, for $k \in \N$, $C^\infty_c(\R^k)$ denotes the set of smooth functions $\R^k\to\R$ of compact support.

\subsection{Main assumptions}
We fix a time horizon $T>0$ and a dimension $d\in \N$. We are given a control space $A$ and two exponents $p,p' \ge 0$. We are also given an initial distribution $\lambda \in \P(\R^d)$ and the functions
\begin{align*}
(b,f) &: [0,T] \times \R^d \times \P^p(\R^d) \times A \rightarrow \R^d \times \R, \\ 
g &: \R^d \times \P^p(\R^d) \rightarrow \R,
\end{align*}
along with the non-singular matrix $\sigma \in \R^{d\times d}$ and the general matrix $\gamma \in \R^{d\times d}$. We make the following assumptions throughout the paper:
\begin{assumption}{\textbf{A}} \label{assumption:A}
{\ }
\begin{enumerate}
\item[(A.1)] $A$ is a compact metric space.
\item[(A.2)] The functions $b, f, g$ of $(t, x, m, a)$ are measurable in $t$ and continuous in $(x, m, a)$.
\item[(A.3)] There exists $c_1 > 0$ such that, for all $(t, x, m, a) \in [0, T ] \times \R^d \times \mathcal{P}^p(\mathbb{R}^d) \times A$,
\begin{align}
|b(t, x, m, a)|  \leq c_1 \left(1 + |x| 1_{\{p > 0\}}  + 1_{\{p > 0\}}\left(\int_{\R^d} |z|^p\,m(dz)\right)^{1/(1 \vee p)} \right).
\label{def:Ass3}
\end{align}
\item[(A.4)] There exist $c_2 > 0$ such that, for all $(t, x, m, a) \in [0, T ] \times \R^d \times \P^p(\R^d) \times A$, 
\begin{align}
     |f(t, x, m, a)| + |g(x, m)| \leq c_2 \Big(1 + |x|^{p} + \int_{\R^d} |z|^{p} \, m(dz) \Big).
    \label{def:Ass4}
\end{align}
\item[(A.4)] The distribution $\lambda$ belongs to $\P^{p'}(\R^d)$, and either $0< p\leq p  \vee 2 < p'$ or $p'=p=0$.
\item[(A.5)] For each $(t,x,m) \in [0,T] \times \R^d \times \P^p(\R^d)$, the following set is convex:
\[
K(t,x,m) = \left\{(b(t,x,m,a),z) : a \in A, \ z \le f(t,x,m,a)\right\} \subset \R^d \times \R.
\]
\end{enumerate}
\end{assumption}
The most unfortunate of these assumptions is the compactness of $A$, which we were unable to generalize; see Remark \ref{re:possible_extensions} for discussion.
The convexity assumption (\ref{assumption:A}.5) holds, for instance, if $b=b(t,x,m,a)$ is affine in $a$ and $f=f(t,x,m,a)$ concave in $a$, for each $(t,x,m)$. As a particular case, this assumption also covers the \emph{relaxed control setup}, where $A = \P(\widetilde A)$ for some compact metric space $\widetilde A$, and $(b,f)$ are of the form $(b,f)(t,x,m,a) = \int_{\widetilde A}(b,f)(t,x,m,\widetilde a)\,a(d\widetilde a)$. In particular, Assumption (\ref{assumption:A}.5) can be dropped at the price of stating our main results in terms of relaxed controls; see Remark \ref{re:working-with-relaxed-controls} for details.
Assumption (\ref{assumption:A}.4) is designed to cover either the case where all coefficients are bounded ($p'=p=0$), for which no integrability is needed for $\lambda$, or the case where the coefficients can be unbounded, for which $\lambda$ must admit a finite moment of order $p' > p \vee 2$.
Our methods should adapt with little change to cover the case where $\sigma=\sigma(t,x)$ is non-constant, as long as it is bounded, Lipschitz, and uniformly non-degenerate. However, it does not seem that our methods could accommodate control or interactions in $\sigma$, because of our repeated use of Girsanov's theorem. It is even more difficult to handle a non-constant $\gamma$, as our proofs exploit a common change of variables $X\to X-\gamma B$.

\subsection{The n-player games} \label{se:nplayergame}

Let $n\in\N$.  Assume the $n$-player game is defined on a filtered probability space $(\Omega^n,\F^n,\FF^n=(\F^n_t)_{t \in [0,T]},\PP^n)$, supporting independent $d$-dimensional $\FF^n$-Brownian motions $B,W^1,\ldots,W^n$ and $\F^n_0$-measurable $\R^d$-valued random variables $X^{n,1}_0,\ldots,X^{n,n}_0$.

We introduce three classes of controls for the $n$-player game.
\begin{itemize}
\item The set of \emph{Markovian controls} $\M_n$ is the set of measurable functions $\alpha : [0, T] \times (\R^d)^n \to A$. These controls are functions of time and the current values of the $n$ state processes.
\item The set of \emph{closed-loop controls} $\A_n$ is the set of progressively measurable functions $\alpha : [0, T] \times (\C^d)^n \to A$. These controls are functions of time and the trajectories of the $n$ state processes.
\item 
A \emph{signal process} is a continuous $\FF^n$-adapted process  $S=(S_t)_{t \in [0,T]}$ taking values in a Polish space $\mathcal{S}$, such that $S$ is independent of $(X^{n,i}_0,W^i)_{i=1}^n$, and $B$ is adapted to $\FF^S$. The set of \emph{$S$-closed-loop controls} $\A_n(S)$ is the set of progressively measurable functions $\alpha : [0, T] \times (\C^d)^n \times C([0,T];\mathcal{S}) \to A$. These controls are functions of time, the trajectories of the $n$ state processes, and the trajectory of the signal.
\end{itemize}

It is clear, accepting a minor abuse of notation, that we have the inclusions $\M_n \subset \A_n \subset \A_n(S)$, and we thus focus on $\A_n(S)$ in the subsequent definitions.
For  $\alpha^1,\ldots,\alpha^n \in \A_n(S)$, we define the associated state process $\bm{X}^n = (X^{n,1}, \ldots, X^{n,n})$ as the unique in law weak solution of the system of SDEs
\begin{align}
dX^{n,i}_t &= b(t,X^{n,i}_t,\mu^n_t,\alpha^i(t,\bm{X}^n,S))dt + \sigma dW^i_t + \gamma dB_t , \quad\quad \mu^n_t = \frac{1}{n}\sum_{k=1}^n\delta_{X^{n,k}_t}, 
\label{def:n_player_games}
\end{align}
starting from the given initial states $X^{n,1}_0,\ldots,X^{n,n}_0$, and we call the process $\mu^n=(\mu^n_t)_{t \in [0,T]}$ the \emph{associated measure flow}, viewed as a random element of $C([0,T];\P^p(\R^d))$. Were it not for  the presence of $S$ in \eqref{def:n_player_games}, the existence and uniqueness in law would be a well known consequence of Girsanov's theorem. There is some subtlety in our setting of random coefficients. 

\begin{definition}\label{def:nplayerSDE}
A \emph{weak solution} $\bm{X}^n = (X^{n,1}, \ldots, X^{n,n})$ of the SDE \eqref{def:n_player_games} corresponding to $(\alpha^1,\ldots,\alpha^n)$ is an extension of the filtered probability space $(\Omega^n,\F^n,\FF^n,\PP^n)$ such that the following hold:
\begin{itemize}
\item $B,W^1,\ldots,W^n$ remain independent Brownian motions in the enlarged filtration, and $\bm{X}^n=(X^{n,1},\ldots,X^{n,n})$ is a continuous adapted $(\R^d)^n$-valued process.
\item The SDE \eqref{def:n_player_games} holds.
\item The process $(W^1,\ldots,W^n)$ is a $\FF^{\bm{X}^n,S}$-Brownian motion under the conditional measure $\PP^n(\cdot\,|\,S=s)$, for $\L(S)$-a.e.\ $s \in C([0,T];\mathcal{S})$.
\end{itemize}
\end{definition} 
\begin{lemma} \label{le:nplayerSDE-lemma}
Suppose Assumption \ref{assumption:A} holds, and let $\alpha^1,\ldots,\alpha^n \in \A_n(S)$. There exists a weak solution to \eqref{def:n_player_games} in the sense of Definition \ref{def:nplayerSDE}, and it is unique in law in the sense that any two weak solutions induce the same joint law of $(\bm{X}^n,S)$.
\end{lemma}

Lemma \ref{le:nplayerSDE-lemma} follows from Lemmas \ref{ap:le:SDE-exist} and \ref{ap:le:SDE-uniq} in Appendix \ref{ap:SDE}, and see also Remark \ref{re:ap:SDEs-nplayer}.
The last bullet point in Definition \ref{def:nplayerSDE} is the important one, constraining the relationship between the signal and state processes, which allows us to assert uniqueness. 

\begin{remark}
Regarding the third property in Definition \ref{def:nplayerSDE}, notice that the SDE \eqref{def:n_player_games} and non-degeneracy of $\sigma$ imply that $\bm{W}^n = (W^1,\ldots,W^n)$  is adapted to $\FF^{\bm{X}^n,S,B}$, and the assumed adaptedness of $B$ with respect to $\FF^S$ then implies that $\bm{W}^n$  is adapted to $\FF^{\bm{X}^n,S}$.
Note also that the third property does not follow automatically from the independence of $S$ and $\bm{W}^n$; it is equivalent to $\bm{W}^n_t-\bm{W}^n_s$ being conditionally independent of $\F^{\bm{X}^n,S}_s$ given $\F^S_T$, for $0 \le s < t \le T$.
\end{remark}

To simplify notation, we use the same symbols $(\Omega^n,\F^n,\FF^n,\PP^n)$ to denote any extension of the probability space arising from Lemma \ref{le:nplayerSDE-lemma}.
We also avoid indexing the Brownian motions by $n$, but this should cause no confusion.
We similarly suppress the dependence  of  the signal process $S$ on $n$, although $S$ and its domain $\mathcal{S}$ could be different for each $n$.  Lastly, we write simply $\E$ rather than $\E^{\PP^n}$ for expectation.

When using $S$-closed-loop controls, players have access to the \emph{common signal process} $S$. Of course, the notion of equilibrium depends on the choice of signal process. This may simply be $B$ itself, or it may contain additional randomness, but this additional randomness must crucially be independent of $(W^1,\ldots,W^n)$. Otherwise, this would include open-loop controls,  and Proposition \ref{pr:np-eq-inclusion0} would fail.
Note that the \emph{filtration} of $S$ is important in the above definitions, but there is no real need for a topological structure or continuity of the process $S$; however, it makes the formulations simpler and covers our main case of interest in Theorem \ref{th:converselimit-strongMFE}.

We need not assume i.i.d.\ initial positions, but rather just the following:

\begin{assumption}{\textbf{B}} \label{assumption:B}
The initial conditions satisfy the moment bound
\begin{align}
    \sup_{n\in \N}\frac{1}{n} \sum_{k=1}^n\E|X_0^{n,k}|^{p'} < \infty.
    \label{ass:assumption_C} 
\end{align}
Moreover, the sequence of empirical measures $(\mu_0^n)_{n \in \N}$ converges to $\lambda$ in probability in $\P^p(\R^d)$.
\end{assumption}

Player $i$ seeks to maximize an objective functional, given for $\alpha^1,\ldots,\alpha^n \in \A_n(S)$ by
\begin{align*}
J^n_i(\alpha^1,\ldots,\alpha^n) := \E\left[\int_0^Tf(t,X^{n,i}_t,\mu^n_t,\alpha^i(t,\bm{X}^n, S))dt + g(X^{n,i}_T,\mu^n_T)\right].
\end{align*}
Given our three notions of control, the relevant Nash equilibrium concepts are:

\begin{definition} \label{def:nplayer-eq}
Let $\epsilon \ge 0$. 
A \emph{Markovian $\epsilon$-Nash equilibrium} is a tuple $(\alpha^1,\ldots,\alpha^n) \in \M_n^n$ such that
\begin{align*}
J^n_i(\alpha^1,\ldots,\alpha^n) \ge \sup_{\beta \in \M_n}J^n_i(\alpha^1,\ldots,\alpha^{i-1},\beta,\alpha^{i+1},\ldots,\alpha^n) - \epsilon, \quad \text{for } i=1,\ldots,n.
\end{align*}
A \emph{closed-loop $\epsilon$-Nash equilibrium} is a tuple $(\alpha^1,\ldots,\alpha^n) \in \A_n^n$ such that
\begin{align*}
J^n_i(\alpha^1,\ldots,\alpha^n) \ge \sup_{\beta \in \A_n}J^n_i(\alpha^1,\ldots,\alpha^{i-1},\beta,\alpha^{i+1},\ldots,\alpha^n) - \epsilon, \quad \text{for }  i=1,\ldots,n.
\end{align*}
An \emph{$S$-closed-loop $\epsilon$-Nash equilibrium} is a tuple $(\alpha^1,\ldots,\alpha^n) \in \A_n^n(S) := (\A_n(S))^n$ such that
\begin{align*}
J^n_i(\alpha^1,\ldots,\alpha^n) \ge \sup_{\beta \in \A_n(S)}J^n_i(\alpha^1,\ldots,\alpha^{i-1},\beta,\alpha^{i+1},\ldots,\alpha^n) - \epsilon, \quad \text{for }  i=1,\ldots,n.
\end{align*}
\end{definition}

The following Proposition relates these three equilibrium concepts, extending \cite[Proposition 2.2]{Lacker_closedloop}. The proof is given in Appendix \ref{ap:relations_Nash_eq}.

\begin{proposition} \label{pr:np-eq-inclusion0}
Suppose Assumptions \ref{assumption:A} and \ref{assumption:B} hold. Let $\epsilon \ge 0$.  
\begin{enumerate}[(a)]
\item Any Markovian $\epsilon$-Nash equilibrium is also a closed-loop $\epsilon$-Nash equilibrium.
\item Any closed-loop $\epsilon$-Nash equilibrium is also an $S$-closed-loop $\epsilon$-Nash equilibrium, for any signal process $S$. 
\end{enumerate} 
\end{proposition}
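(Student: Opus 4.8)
### Proof Plan for Proposition \ref{pr:np-eq-inclusion0}

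\textbf{Part (a).} The plan is to show that no profitable closed-loop deviation exists, given that no profitable Markovian one does. Fix a Markovian $\epsilon$-Nash equilibrium $(\alpha^1,\ldots,\alpha^n)\in\M_n^n$ and a candidate deviation $\beta\in\A_n$ for player $i$. The key observation is that when players $j\neq i$ use Markovian feedback controls $\alpha^j(t,\bm{X}^n_t)$, the full state system \eqref{def:n_player_games}, driven additionally by player $i$'s deviation $\beta(t,\bm{X}^n)$, is still a well-posed SDE (by Lemma \ref{le:nplayerSDE}, or rather its no-signal specialization). I would then invoke the standard fact that for a single controlled diffusion with non-degenerate noise $\sigma$, the value obtained by any open-loop/path-dependent control can be matched by a Markovian one: concretely, letting $u(t,x) := \E[\beta(t,\bm{X}^n)\mid \bm{X}^n_t = x]$ fails in general because $\beta$ takes values in $A$ which need not be convex — so instead I would use the mimicking/Markovian projection argument exactly as in \cite[Proposition 2.2]{Lacker_closedloop}. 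The cleanest route: the conditional law of the idiosyncratic drivers given the past is unaffected, the drift of $X^{n,i}$ under the deviation is a progressively measurable $A$-valued process, and by a measurable-selection argument one constructs a Markovian control $\widetilde\beta\in\M_n$ whose induced law of $(\bm{X}^n,\mu^n)$ has the same one-dimensional time-marginals of $(X^{n,i}_t,\mu^n_t,\text{drift}_t)$, hence the same value of $J^n_i$. Since $J^n_i$ depends on $\beta$ only through these marginals, $J^n_i(\ldots,\beta,\ldots) = J^n_i(\ldots,\widetilde\beta,\ldots) \le J^n_i(\alpha^1,\ldots,\alpha^n)+\epsilon$ by the Markovian equilibrium property.

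\textbf{Part (b).} Here the plan is a conditioning argument. Fix a closed-loop $\epsilon$-Nash equilibrium $(\alpha^1,\ldots,\alpha^n)\in\A_n^n$, a signal process $S$, and a candidate $S$-closed-loop deviation $\beta\in\A_n(S)$ for player $i$. Write $S=s$ and work under the conditional measure $\PP^n(\cdot\mid S=s)$. By the third bullet of Lemma \ref{le:nplayerSDE}, under $\PP^n(\cdot\mid S=s)$ the processes $(W^1,\ldots,W^n)$ remain Brownian motions with respect to $\FF^{\bm{X}^n,S}$, $B$ is $\FF^S$-measurable hence essentially frozen, and the system \eqref{def:n_player_games} with player $i$ using $\beta(\cdot,\bm{X}^n,s)$ — now a genuine closed-loop control in $\A_n$ for each fixed $s$ — is a well-posed SDE of the original (signal-free) type. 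Thus for a.e.\ $s$, the conditional objective $\E[\,\cdot\mid S=s]$ for player $i$ against this frozen deviation is no larger than the conditional objective of the equilibrium profile plus $\epsilon$ — \emph{provided} the equilibrium profile $(\alpha^j)_j$ viewed conditionally is still an equilibrium. Since the $\alpha^j$ do not depend on $s$, this requires comparing the conditional value to a conditional closed-loop optimum. Integrating over $\L(S)$ then gives $J^n_i(\ldots,\beta,\ldots)\le J^n_i(\alpha^1,\ldots,\alpha^n)+\epsilon$.

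\textbf{Main obstacle.} The delicate point in (b) is that the closed-loop equilibrium inequality is stated for the \emph{unconditional} objective, and one cannot directly disintegrate ``$J^n_i(\alpha^1,\ldots,\alpha^n)\ge \sup_{\gamma\in\A_n}J^n_i(\ldots,\gamma,\ldots)-\epsilon$'' into a per-$s$ statement without knowing the conditional system is exactly an instance of the original game. The resolution is the uniqueness in law from Lemma \ref{le:nplayerSDE}: the law of $(\bm{X}^n,\mu^n)$ under $\PP^n(\cdot\mid S=s)$, when players $j\ne i$ use $\alpha^j$ and player $i$ uses $\gamma\in\A_n$, coincides with the law in a fresh $n$-player system (no signal) driven by Brownian motions $W^1,\ldots,W^n$ and with $B$ replaced by the deterministic-once-conditioned path — i.e.\ a game with a non-random common-noise realization. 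One then needs that the closed-loop $\epsilon$-equilibrium property is preserved under conditioning on $S$; this follows because any $\gamma\in\A_n$ yielding a conditional improvement on a non-null set of $s$ could be assembled (using measurable selection in $s$, legitimate since $\A_n$-controls composed measurably with $s$ land in $\A_n(S)$, then re-projected) into an unconditional improving deviation, contradicting the equilibrium property. Formalizing this selection-and-reassembly step, and checking the integrability needed to pass expectations through the conditioning (guaranteed by Assumption \ref{assumption:B} and the moment bounds of Section \ref{se:estimates}), is the crux; everything else is bookkeeping. I expect the write-up to lean on the analogous argument in \cite[Proposition 2.2]{Lacker_closedloop} for the structural pieces, adding only the conditioning layer for the signal.
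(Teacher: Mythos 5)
Your part (a) is essentially the right idea (Markovian projection of the deviating player's drift and running cost, realized as a genuine control via measurable selection under Assumption (\ref{assumption:A}.5), then the Brunick--Shreve mimicking lemma), and the paper itself only sketches (a) by reference to \cite[Proposition 2.2]{Lacker_closedloop}. Part (b), however, has a genuine gap, and it is one you half-recognize in your ``main obstacle'' paragraph. Your conditioning-on-$S=s$ strategy reduces to bounding $\int \sup_{\gamma\in\A_n}\E[J^n_i(\ldots,\gamma,\ldots)\,|\,S=s]\,\L(S)(ds)$ by $J^n_i(\alpha^1,\ldots,\alpha^n)+\epsilon$. The closed-loop equilibrium hypothesis only controls $\sup_{\gamma\in\A_n}J^n_i(\ldots,\gamma,\ldots)$, i.e.\ the supremum \emph{outside} the integral; pulling the supremum inside is strictly stronger because the near-optimal $\gamma$ may depend on $s$. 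Your proposed fix --- measurably select $\gamma_s$ and assemble them --- produces a control $(t,\bm x,s)\mapsto\gamma_s(t,\bm x)$ that lies in $\A_n(S)$, not in $\A_n$, so its improving value does not contradict the closed-loop equilibrium property; it would only contradict the $S$-closed-loop property, which is the conclusion you are trying to prove. The parenthetical ``then re-projected'' is where the entire content of the proposition is hiding, and you never supply that projection.

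What the paper actually does for (b) is exactly that projection, applied directly to the deviation $\beta\in\A_n(S)$ without any conditioning: let $\bm Y$ be the state system under $(\beta,\alpha^2,\ldots,\alpha^n)$, define $c_1(t,\bm Y)=\E[b(t,Y^1_t,\nu^n_t,\beta(t,\bm Y,S))\,|\,\F^{\bm Y}_t]$ and $c_2$ analogously for $f$, observe that $(c_1,c_2)\in K(t,Y^1_t,\nu^n_t)$ by the convexity assumption (\ref{assumption:A}.5), select a progressively measurable $\widehat\alpha\in\A_n$ with $c_1=b(\cdot,\widehat\alpha)$ and $c_2\le f(\cdot,\widehat\alpha)$, and invoke the mimicking lemma (Lemma \ref{le:markovprojection}) to conclude that the state law under $(\widehat\alpha,\alpha^2,\ldots,\alpha^n)$ coincides with $\L(\bm Y)$, whence $J^n_1(\beta,\ldots)\le J^n_1(\widehat\alpha,\ldots)\le J^n_1(\bm\alpha)+\epsilon$. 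Note that (\ref{assumption:A}.5) is indispensable here and is absent from your part (b); without it the conditional expectation of the drift need not be attainable by any $A$-valued control, which is precisely the objection you raised yourself in part (a).
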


The rough intuition here is that if every other player is using a certain information set (e.g., Markovian), then no single player can do any better by incorporating a broader information set, at least as long as this broader information does not directly affect the players' objective functions.
The opposite implications do not hold (e.g., there can be closed-loop equilibria which are not Markovian), simply because the inclusion of control sets fails, i.e., $\A_n \not\subset \M_n$; the simplest counterexamples arise by taking $f\equiv 0$ and $g \equiv 0$, so that \emph{any} controls form an equilibrium.

Proposition \ref{pr:np-eq-inclusion0} remains valid under more general assumptions; for instance, the action space $A$ does not need to be compact, as long as suitable growth or integrability assumptions are imposed.
We could also define S-closed-loop controls without requiring that $B$ is $\FF^S$-adapted and Proposition  \ref{pr:np-eq-inclusion0} would stay valid. However, this restriction is needed for our main theorem, so we prefer to include it in the definition.

\subsection{Mean field equilibrium, existence and uniqueness} \label{se:MFE}

We are now ready to define the mean field equilibrium (MFE).  The definition that follows should more precisely be referred to as \emph{MFE corresponding to the model inputs $(b,\sigma,\gamma,f,g,\gamma,A,\lambda,p,d)$}, but we will leave this dependence as implicit.
The claimed existence and uniqueness for the SDEs \eqref{def:weakMFE-SDE} and \eqref{def:SDE-semimarkov-strong} below are justified in Appendix \ref{ap:SDE}, by Lemmas \ref{ap:le:SDE-exist}, \ref{ap:le:SDE-uniq}, and \ref{ap:le:SDE-uniq-strong}, as well as Remark \ref{re:ap:SDEs-MFG}.

\begin{definition} \label{def:weakMFE}
We say that the tuple $(\Omega,\F,\FF,\PP ,W , B, \alpha^*,\mu, X^*)$ is a \emph{weak semi-Markov mean field equilibrium} (weak MFE, for short) if the following hold:
\begin{enumerate}[(1)]
\item $(\Omega,\F,\FF,\PP)$ is a filtered probability space.
\item $\mu = (\mu_t)_{t \in [0,T]}$ is a continuous $\P^p(\R^d)$-valued $\FF$-adapted process, and $W$ and $B$ are independent $\FF$-Brownian motions.
\item $X^*_0$, $W$, and $(\mu, B)$ are independent.
\item $\alpha^* : [0,T] \times \R^d \times C([0, T]; \P^p(\R^d) \times \R^d)  \rightarrow A$ is semi-Markov, in the sense of Definition \ref{def:semiMarkov}.
\item $X^*$ is a continuous $\R^d$-valued $\FF$-adapted process, with $\L(X^*_0)=\lambda$, which is the unique strong solution of the SDE
\begin{align}
dX^*_t = b(t,X^*_t,\mu_t,\alpha^*(t,X^*_t,\mu, B))dt + \sigma dW_t + \gamma dB_t, \label{def:weakMFE-SDE}
\end{align}
started from $X^*_0$. Moreover, the consistency condition  $\mu_t = \L(X^*_t \, | \, \F_t^{\mu,B})$ holds a.s.\ for each $t \in [0,T]$.
\item For every alternative semi-Markov $\alpha : [0,T] \times \R^d \times C([0, T]; \P^p(\R^d) \times \R^d)  \rightarrow A$, we have
\begin{align}
\begin{split}
\E&\left[\int_0^T f(t,X^*_t,\mu_t,\alpha^*(t,X^*_t,\mu, B)) dt + g(X^*_T,\mu_T)\right] \\
	&\ge \E\left[\int_0^T f(t, X_t,\mu_t, \alpha(t, X_t,\mu, B)) dt + g( X_T, \mu_T)\right],
\end{split}
	\label{def:optimality_part_generalized_MFE}
\end{align}
where $X$ is the unique strong solution of
\begin{align}
d X_t = b(t, X_t,\mu_t,\alpha(t, X_t,\mu, B))dt + \sigma dW_t + \gamma dB_t, \qquad X_0 = X_0^*. 
\label{def:SDE-semimarkov-strong}
\end{align}
\end{enumerate}
If $\mu$ is adapted to the completion of $\FF^B$, then we call $(\Omega,\F,\FF,\PP ,W , B, \alpha^*,\mu, X^*)$ a \emph{strong} MFE. 
\end{definition}

The \emph{semi-Markov} control in the Definition \ref{def:weakMFE} depends on $(t,X_t)$ plus the \emph{history of the common noise}.
This common noise can be decomposed into two parts: the exogeneous $B$, and the endogenous $\mu$. The latter need not be $B$-measurable in a weak MFE, and it then represents a common signal to which all players correlate their behavior.

We can add the restriction that $\FF$ equals the completion of $\FF^{X_0^*,W,B,\mu}$ in Definition \ref{def:weakMFE}(1), essentially without loss of generality, because we have strong solutions in \eqref{def:weakMFE-SDE} and \eqref{def:SDE-semimarkov-strong}. Indeed, it is straightforward to check that $(\Omega,\F,\FF,\PP ,W , B, \alpha^*,\mu, X^*)$ is a weak MFE if and only if $(\Omega,\F^{X_0^*,W,B,\mu}_T,\FF^{X_0^*,W,B,\mu},\PP ,W , B, \alpha^*,\mu, X^*)$ is a weak MFE.

\begin{remark}
Definition \ref{def:weakMFE} admits a natural reformulation purely in terms of stochastic Fokker-Planck equations (which is equivalent by the superposition principle quoted in Theorem \ref{th:superposition} below). Indeed, we could omit $(W,X^*)$ from the definition, replace the SDEs \eqref{def:weakMFE-SDE} and \eqref{def:SDE-semimarkov-strong} respectively by the SPDEs (in weak form) satisfied by $\mu$ and by $\nu_t=\L(X_t\,|\,\F^{\mu,B}_t)$, and rewrite \eqref{def:optimality_part_generalized_MFE} in terms of these random measures. The proofs of the following results rely mainly on these SPDEs and thus would not change much. We favor the form given in Definition \ref{def:weakMFE}, which we find to be more transparent.
\end{remark}

\begin{remark}
For a weak MFE, it holds that $\F^{X^*}_t$ is conditionally independent of $\F^{\mu,B}_T$ given $\F^{\mu,B}_t$, and thus $\mu_t=\L(X^*_t\,|\,\F^{\mu,B}_T)$ a.s. Indeed, this is because $(X^*_0,W)$ is independent of $(\mu,B)$, and because $X^*$ is a strong solution and is thus adapted to the completion of $\FF^{X_0^*,W,B,\mu}$.
\end{remark}

We will explain in Section \ref{se:connectionsWeakMFE} that the notion of weak MFE is equivalent, in a distributional sense, to the notion of \emph{weak MFG solution} introduced in \cite{carmona-delarue-lacker,lacker2016general,carmona-delarue-book}.
This will let us quickly transfer existence and uniqueness theorems from \cite{carmona-delarue-lacker} to our new notion of equilibrium, such as:

\begin{theorem} \label{th:existence}
In addition to Assumption \ref{assumption:A}, suppose also that $b$ is uniformly Lipschitz in $x$. That is, there exists $C < \infty$ such that
\begin{align*}
|b(t,x,m,a)-b(t,x',m,a)| \le C|x-x'|, \qquad \forall x,x' \in \R^d, \ (t,m,a) \in [0,T] \times \P^p(\R^d) \times A.
\end{align*}
Then there exists a weak MFE. Suppose in  addition that the following conditions hold:
\begin{itemize}
\item The drift $b(t, x, m, a) = b(t,x,a)$ does not depend on the measure $m$.
\item The running cost is separable, i.e., $f(t, x, m, a) = f_1(t, x, m) + f_2(t, x, a)$ for some measurable functions $f_1$ and $f_2$.
\item $A$ is a compact convex subset of $\R^k$ for some $k$, 
\item For each $(t,m)$, $(x,a) \mapsto b(t,x,a)$ is affine, $x\mapsto g(x,m)$ is concave, and $(x,a) \mapsto f_2(t,x,a)$ is strictly concave.
\item The Lasry-Lions monotonicity condition holds: For all $t \in [0,T]$ and $m,\widetilde{m} \in \P^p(\R^d)$,
\begin{align*}
\int_{\R^d} \left[g(x, m) - g(x, \widetilde{m})\right](m-\widetilde{m})(dx) &\le 0, \\
\int_{\R^d} \left[f_1(t,x, m) - f_1(t,x, \widetilde{m})\right](m-\widetilde{m})(dx) &\le 0.
\end{align*}
\end{itemize}
Then there exists a unique in law weak MFE, in the sense that $\L(\mu^1,B^1)=\L(\mu^2,B^2)$ for any two weak MFE $(\Omega^i,\F^i,\FF^i,\PP^i ,W^i , B^i, \alpha^i,\mu^i, X^i)$, and it is in fact a strong MFE.
\end{theorem}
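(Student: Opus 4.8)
The plan is to deduce Theorem~\ref{th:existence} from the known existence and uniqueness results for the \emph{weak MFG solutions} of \cite{carmona-delarue-lacker} (see also \cite[Volume~II, Chapter~3]{carmona-delarue-book}), via the equivalence between weak MFE and weak MFG solutions that is the subject of Section~\ref{se:connectionsWeakMFE}. That equivalence will provide two things: (i) from any weak MFG solution one can build a weak MFE with the same law of $(\mu,B)$, and conversely; and (ii) the correspondence maps strong MFG solutions to strong MFE and back. Granting this, both halves of Theorem~\ref{th:existence} reduce to the corresponding statements for weak MFG solutions.

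For existence, I would first check that Assumption~\ref{assumption:A} together with the extra Lipschitz-in-$x$ hypothesis on $b$ implies the standing hypotheses of the relevant existence theorem for weak MFG solutions in \cite{carmona-delarue-lacker}: compactness and continuity of the coefficients come from (A.1)--(A.2); the growth bounds (A.3)--(A.4) and the moment hypothesis on $\lambda$ supply the integrability needed for the tightness/compactness argument in $\P^p(\R^d)$, including in the unbounded-coefficient regime $p'>p\vee 2$; and (A.5) is exactly the convexity of the set $K(t,x,m)$ used there to pass to the limit without resorting to relaxed controls. This produces a weak MFG solution, which Section~\ref{se:connectionsWeakMFE} then converts into a weak semi-Markov MFE. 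The one non-automatic point in this conversion is producing the \emph{semi-Markov feedback} $\alpha^*(t,X^*_t,\mu,B)$ of Definition~\ref{def:weakMFE}(4) out of the a priori only open-loop, compatibly adapted optimal control of the weak MFG solution; this is where non-degeneracy of $\sigma$ is used, through a mimicking/Markovian-projection argument (as in \cite{Lacker_closedloop}) or, equivalently, the superposition principle of \cite{superposition_theorem}, which lets one match the time-marginals of $(X_t,\alpha_t)$ by a feedback control without changing the value of the objective.

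For uniqueness, under the additional structural hypotheses---$b$ independent of $m$, separable running cost $f=f_1+f_2$, affine $b$, concave $g$, strictly concave $f_2$, and the Lasry--Lions monotonicity inequalities---I would invoke the uniqueness theorem for weak MFG solutions from \cite{carmona-delarue-lacker}. Strict concavity makes the inner optimal control problem (for a frozen realization of $\mu$ and $B$) uniquely solvable, so the best response is a well-defined measurable function of $(\mu,B)$; Lasry--Lions monotonicity then forces any two solutions to share the same $\L(\mu,B)$; and a Yamada--Watanabe-type argument (weak uniqueness together with the existence of a strong solution) upgrades every weak MFG solution to the strong one. Transporting this through the equivalence of Section~\ref{se:connectionsWeakMFE} yields $\L(\mu^1,B^1)=\L(\mu^2,B^2)$ for any two weak MFE and shows they are strong, which is the assertion.

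The main obstacle is the final step of the existence argument: the equivalence of Section~\ref{se:connectionsWeakMFE} is only asserted ``in a distributional sense'', so one must verify carefully that it outputs a bona fide weak MFE in the sense of Definition~\ref{def:weakMFE}---in particular that the optimality property \eqref{def:optimality_part_generalized_MFE} against \emph{all} semi-Markov competitors $\alpha$ follows from optimality of the weak MFG solution within its larger, open-loop admissible class, and that the feedback control so obtained is genuinely semi-Markov and Borel. A secondary technical point is checking that the cited existence theorem, as stated, covers the non-i.i.d.\ initial conditions and unbounded coefficients used here; if it does not literally, one instead quotes (or re-runs) the version with the moment exponent $p'$, whose proof is the same compactness argument.
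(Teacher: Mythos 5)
Your proposal is correct and follows essentially the same route as the paper: the paper proves Theorem \ref{th:existence} precisely by invoking the equivalence of Theorem \ref{th:weaksemiMarkov-to-weakMFGsolution} to transfer the existence theorem \cite[Theorem 3.2]{carmona-delarue-lacker} and the uniqueness/strongness results \cite[Theorem 6.2 and Proposition 4.4]{carmona-delarue-lacker} to the weak semi-Markov MFE setting. The "main obstacle" you flag (producing a genuinely semi-Markov optimal feedback and verifying optimality against all semi-Markov deviations) is exactly the content of Theorem \ref{th:weaksemiMarkov-to-weakMFGsolution}, which the paper handles by adapting the projection and measurable-selection arguments of \cite{Lacker_closedloop}.
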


\subsection{The convergence theorem}\label{se:mainlimit_sec}

We may now state the first main result of the paper: 

\begin{theorem} \label{th:mainlimit}
Suppose Assumptions \ref{assumption:A} and \ref{assumption:B} hold.
Fix a sequence $\epsilon_n \ge 0$ with $\epsilon_n \rightarrow 0$. For each $n$, suppose $\bm{\alpha}^n=(\alpha^{n,1},\ldots,\alpha^{n,n}) \in \A_n^n(S)$ is an $S$-closed-loop $\epsilon_n$-Nash equilibrium for some signal process $S$. Let $\mu^n$ denote the associated measure flow. Then the sequence $\{\L(\mu^n, B) : n \in \N\} \subset \P(C([0, T]; \P^p(\R^d) \times \R^d))$ is precompact, and every limit is of the form $\L(\mu,B)$ for some weak MFE $(\Omega,\F,\FF,\PP ,W , B, \alpha^*,\mu, X^*)$.
\end{theorem}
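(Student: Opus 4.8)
{\it Proof proposal.} The plan is to follow the probabilistic compactness strategy of \cite{Lacker_closedloop}, adapted to the common noise. The argument has three phases: tightness, identification of the limiting dynamics together with the consistency condition, and verification of the optimality property \eqref{def:optimality_part_generalized_MFE}.

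\textbf{Step 1: Tightness.}
First I would show that $\{\L(\mu^n,B) : n\in\N\}$ is precompact in $\P(C([0,T];\P^p(\R^d)\times\R^d))$. Using the moment estimates of Section \ref{se:estimates} (which bound $\E\sup_t \frac1n\sum_k |X^{n,k}_t|^{p'}$ uniformly in $n$ via Assumptions \ref{assumption:A} and \ref{assumption:B} and the linear growth of $b$), together with the $p'>p$ slack, one upgrades tightness of $\mu^n$ from $C([0,T];\P(\R^d))$ to $C([0,T];\P^p(\R^d))$ by uniform integrability of the $p$-th moments. Equicontinuity of $t\mapsto\mu^n_t$ in $\W_p$ follows from the SDE: increments are controlled by a drift term (bounded in $L^{p'}$) plus the Gaussian terms $\sigma(W^i_t-W^i_s)+\gamma(B_t-B_s)$, giving a Kolmogorov-type modulus-of-continuity estimate uniform in $n$. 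Since $B$ is a fixed Brownian motion, the pair $(\mu^n,B)$ is tight. Pass to a subsequence along which $\L(\mu^n,B)\to\L(\mu,B)$ for some limit $(\mu,B)$ (Skorokhod representation lets us assume a.s. convergence on a common space).

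\textbf{Step 2: Identifying the limit and the consistency condition.}
Along the converging subsequence, I would also track the empirical measure on path space together with controls, i.e. $\frac1n\sum_k \delta_{(X^{n,k},\alpha^{n,k}(\cdot,\bm X^n,S), W^k)}$ in relaxed-control form, plus $(\mu^n,B)$; these are jointly tight by the same estimates. Any limit point is a measure-valued/relaxed object, and the key is to characterize it via a martingale-problem / SPDE argument: the limiting $\mu$ solves a conditional (stochastic) Fokker--Planck equation driven by $B$ with a relaxed drift. Here I would invoke the superposition principle for SPDEs of \cite{superposition_theorem}: it lets me build, on a suitable extension, a single representative process $X^*$ with control $\alpha^*$ such that $\mu_t=\L(X^*_t\,|\,\F^{\mu,B}_t)$ and $X^*$ solves \eqref{def:weakMFE-SDE}; non-degeneracy of $\sigma$ plus the convexity Assumption (\ref{assumption:A}.5) lets me collapse the relaxed control into a genuine semi-Markov feedback control $\alpha^*(t,X^*_t,\mu,B)$ (mimicking-theorem style, using the drift $= \E[b(t,X^*_t,\mu_t,\cdot)\,|\,\F^{X^*}_t\vee\F^{\mu,B}_T]$ and measurable selection). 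Independence properties in Definition \ref{def:weakMFE}(2)--(3) come from the independence of $(X^{n,i}_0,W^i)$ from $(S,B)$ in the $n$-player setup and the compatibility built into Lemma \ref{le:nplayerSDE}.

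\textbf{Step 3: Optimality.}
This is the hard part and the main obstacle. I need to show that $\alpha^*$ beats every alternative semi-Markov $\alpha$ against the \emph{limiting} $\mu$. The strategy, following \cite{Lacker_closedloop}: fix a semi-Markov deviation $\alpha$, and for each $n$ let player $1$ deviate to a nearly-optimal approximation of $\alpha$ that only uses information player $1$ is allowed to use — crucially, player $1$ can reconstruct $\mu^{n}$ (hence an approximation of $\mu$) from $\bm X^n$, and $B$ is $\FF^S$-adapted so it is available; the other $n-1$ players keep $\alpha^{n,j}$. The $\epsilon_n$-Nash property gives $J^n_1(\bm\alpha^n)\ge J^n_1(\text{deviation})-\epsilon_n$. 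The subtlety absent without common noise is that after the deviation the empirical measure is \emph{perturbed} by one coordinate; one shows this perturbation is $O(1/n)$ in $\W_p$ and propagates (Gronwall, using growth bounds) so the deviating player faces essentially the same measure flow $\mu^n$. Taking $n\to\infty$, the left side converges to $\E[\int f(t,X^*_t,\mu_t,\alpha^*)dt+g(X^*_T,\mu_T)]$ and the right side to $\E[\int f(t,X_t,\mu_t,\alpha)dt+g(X_T,\mu_T)]$ (continuity of $f,g$, uniform integrability from Assumption (\ref{assumption:A}.4) and the $p'>p\vee2$ slack, continuous dependence of the controlled SDE on $\mu$). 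The delicate points are: (i) handling conditioning on $S$ so that the deviating player's problem is genuinely a one-player control problem with $\mu^n,B$ frozen as data (this is exactly where the third bullet of Lemma \ref{le:nplayerSDE} is used); (ii) the measurable/continuous approximation of $\alpha$ by admissible feedback controls, requiring the semi-Markov structure and non-degeneracy of $\sigma$; and (iii) the limit passage in the cost, which is where unbounded coefficients force the moment bookkeeping. I expect (i) — correctly isolating the single deviator's control problem in the presence of the common signal — to be the step requiring the most care, since it is the main structural novelty over the no-common-noise case of \cite{Lacker_closedloop}.
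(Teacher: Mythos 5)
Your Steps 1 and 2 match the paper's approach (tightness of the extended empirical measure on $\C^d\times\V$, passage to an SPDE limit, projection of the relaxed drift onto $\F^{\mu,B}_t$, measurable selection via Assumption (\ref{assumption:A}.5), and the superposition principle of \cite{superposition_theorem} to produce $X^*$ and the consistency condition). The gap is in Step 3, and it is exactly at the point you flag as delicate.

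Your plan for the deviating player's value rests on the claim that the one-coordinate perturbation ``is $O(1/n)$ in $\W_p$ and propagates (Gronwall, using growth bounds), so the deviating player faces essentially the same measure flow $\mu^n$.'' This cannot work as stated: the non-deviating players' controls $\alpha^{n,j}$ are arbitrary measurable (possibly path-dependent) functions of the full state vector, with no continuity or Lipschitz regularity in any variable. A Gronwall comparison between $\bm Y^{n,k}$ and $\bm X^n$ is therefore unavailable, and indeed the perturbed trajectories need not be pathwise close to the original ones at all. The paper's resolution is a Girsanov change of measure: the law of $(\bm Y^{n,k},S)$ equals the law of $(\bm X^n,S)$ under the reweighted measure $\zeta^{n,k}_T\,d\PP^n$, where $\zeta^{n,k}$ is an explicit exponential martingale driven by $W^k$, so the deviation values become $\frac1n\sum_k\E[\zeta^{n,k}_T\Gamma(X^{n,k},\mu^n,B)]$ \emph{exactly}, with no stability estimate needed. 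One then studies the augmented empirical measure $\frac1n\sum_k\delta_{(X^{n,k},\zeta^{n,k},\Lambda^{n,k})}$, derives a degenerate Fokker--Planck SPDE for its limit, and applies test functions of the form $(x,y)\mapsto y\varphi(x)$ to obtain a well-posed SPDE for the reweighted flow $\nu_t(dx)=\int y\,\widetilde\mu_t(dx,dy)$; the superposition principle then identifies $\nu_t=\L(X_t[\beta]\,|\,\F^{\nu,\mu,B}_t)$, which is what yields convergence of the deviation values to the mean-field cost of $\beta$. (Some care is still needed to check that Girsanov applies conditionally on $S$, which is where the third bullet of Lemma \ref{le:nplayerSDE} enters — consistent with your point (i).)

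A secondary issue: you let only player $1$ deviate. Since the equilibrium controls are not assumed symmetric or exchangeable, there is no way to identify the limit law of the single tuple $(Y^{n,1,1},\mu^{n,1},B)$; the paper averages the Nash inequality over all players $k=1,\dots,n$, each deviating in turn, precisely so that the deviation values aggregate into an empirical-measure functional amenable to the compactness machinery. You also need the preliminary reduction (Lemma \ref{le:approx-optimizer-nice}) to continuous semi-Markov deviations before any weak-convergence argument for the cost can go through; you allude to this in point (ii) but it should be an explicit approximation step.
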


The proof is given in Section \ref{se:mainlimitproof}. By Proposition \ref{pr:np-eq-inclusion0}, the conclusions of Theorem \ref{th:mainlimit} hold if $\bm{\alpha}^n$ is assumed instead to be a closed-loop or Markovian $\epsilon_n$-Nash equilibrium.

\begin{remark}
As in \cite[Remark 2.8]{Lacker_closedloop}, Theorem \ref{th:mainlimit} does not need the full strength of the Nash equilibrium concept.
It remains true, with the same proof, if we assume merely that
\[
\frac{1}{n}\sum_{k=1}^{n}J^{n}_k(\alpha^{n,1},\ldots,\alpha^{n,n}) + \epsilon_n \ge \sup_{\beta \in \A_n(S)}\frac{1}{n}\sum_{k=1}^n J^{n}_k(\alpha^{n,1},\ldots,\alpha^{n,k-1},\beta,\alpha^{n,k+1},\ldots,\alpha^{n,n}).
\]
\end{remark}

\begin{remark} \label{re:fullyMarkov1}
If one is interested in characterizing limits of \emph{Markovian} equilibria of the $n$-player games, it would be arguably more natural to do so in terms of a MFE concept in which controls depend on $(t, X_t, \mu_t)$ instead of $(t,X_t,\mu,B)$.
This at first seemed feasible, in light of the recent \emph{mimicking theorem} in \cite{superposition_theorem} for McKean-Vlasov processes with common noise. In particular, given a weak semi-Markov MFE $(\Omega,\F,\FF,\PP ,W , B, \alpha^*,\mu, X^*)$, we could apply  \cite[Corollary 1.6]{superposition_theorem} to obtain a tuple $(\widehat\Omega,\widehat\F,\widehat\FF,\widehat\PP ,\widehat W , \widehat B, \widehat\alpha^*,\widehat\mu, \widehat X^*)$, satisfying properties (1--5) of Definition \ref{def:weakMFE}, except with a ``fully Markov" control $\widehat\alpha^*(t,\widehat X^*_t,\widehat\mu_t)$, and also satisfying the ``mimicking" property $\L(\widehat\mu_t,\widehat X_t)=\L(\mu_t,X_t)$ for all $t \in [0,T]$. But the joint law of $(\mu,B)$ is not preserved, and it is thus unclear that this resulting tuple satisfies any meaningful optimality property. More concretely, we were unable to show that it is optimal against all ``fully Markov" deviations. It is similarly unclear how to obtain a meaningful notion of equilibrium in which controls are semi-Markov but depending only on (the history of) $\mu$, not $B$. The essential issue seems to be that the law of the state process $X$ is determined by the control $\alpha$ as well as the joint law $\L((\mu_t,B_t)_{t \in [0,T]})$; on the other hand, the control and either $\L((\mu_t)_{t \in [0,T]})$ or $(\L(\mu_t))_{t \in [0,T]}$ do not together determine $\L(X)$.
Interestingly, this  issue does not arise in cooperative (i.e., mean field control) problems, where the mimicking theorem does yield optimal ``fully Markov" controls \cite[Section 8]{superposition_theorem}.
\end{remark}

\begin{remark}\label{re:working-with-relaxed-controls}
If we drop the convexity assumption (\ref{assumption:A}.5), a version of Theorem \ref{th:mainlimit} still holds in which every limit  is of the form $\L(\mu,B)$ for some weak \emph{relaxed} MFE. A weak \emph{relaxed} MFE $(\Omega,\F,\FF,\PP ,W , B, \alpha^*,\mu, X^*)$ is defined as in Definition \ref{def:weakMFE}, except that $\alpha^*$ takes value in $\P(A)$ instead of $A$. That is $\alpha^*:[0, T] \times \R^d \times C([0, T]; \P^p(\R^d) \times \R^d) \rightarrow \P(A)$, and the SDE \eqref{def:weakMFE-SDE} becomes
\begin{align*}
dX^*_t = \int_A b(t,X^*_t,\mu_t,a)\alpha^*(t,X^*_t,\mu, B)(da) dt + \sigma dW_t + \gamma dB_t, 
\end{align*}
whereas the optimality condition \eqref{def:optimality_part_generalized_MFE} becomes 
\begin{align*}
\begin{split}
\E&\left[\int_0^T \int_A f(t,X^*_t,\mu_t,a)\alpha^*(t,X^*_t,\mu, B)(da) dt + g(X^*_T,\mu_T)\right] \\
	&\ge \E\left[\int_0^T\int_A f(t, X_t,\mu_t, a)\alpha(t, X_t,\mu, B)(da) dt + g( X_T, \mu_T)\right],
\end{split}
\end{align*}
for every alternative semi-Markov $\alpha : [0,T] \times \R^d \times C([0, T]; \P^p(\R^d) \times \R^d) \rightarrow \P(A)$, where $X$ is the unique strong solution of
\begin{align*}
d X_t = \int_A b(t, X_t,\mu_t,a)\alpha(t, X_t,\mu, B)(da) dt + \sigma dW_t + \gamma dB_t, \qquad X_0 = X_0^*. 
\end{align*}
Under Assumption (\ref{assumption:A}.5) it can be shown that these definitions coincide: given a weak relaxed MFE as above, there exists $\hat\alpha^*$ with values in $A$ such that $(\Omega,\F,\FF,\PP ,W , B, \hat\alpha^*,\mu, X^*)$ is a weak MFE. We refer to \cite[Section 3]{Lacker_closedloop} for more on relaxed equilibria and how relaxed and \emph{strict} equilibria are related without the presence of a common noise. We prefer to avoid including a similar section to our paper, which would only be a straightforward adaptation in the case of a common noise.  
\end{remark}

\subsection{A converse to the main limit theorem}  \label{se:converse}

Our second main result is a converse to Theorem \ref{th:mainlimit}, showing that a weak MFE can be used to construct $\epsilon_n$-equilibria for the $n$-player games with $\epsilon_n \to 0$. 
As in \cite{Lacker_closedloop}, we adopt in this section an additional assumption which will allow us to apply a recent propagation of chaos results from \cite{lacker2018strong}. 

\begin{assumption}{\textbf{C}} \label{assumption:C}
The drift $b$ is uniformly Lipschitz with respect to total variation, meaning that there exists $c > 0$ such that, for each $(t,x,a) \in [0,T] \times \R^d \times A$ and $m,m' \in \P(\R^d)$, we have
\begin{align*}
\frac{1}{c}|b(t,x,m,a) - b(t,x,m',a)| \le  \|m-m'\|_{\mathrm{TV}} := \sup_h \int_{\R^d} h\,d(m-m'),
\end{align*}
where the supremum is over all measurable functions $h : \R^d \rightarrow [-1,1]$.
\end{assumption}

\begin{theorem} \label{th:converselimit-strongMFE}
Suppose Assumptions \ref{assumption:A} and \ref{assumption:C} hold, with $p=0$ so that $(b,f,g)$ are bounded. Suppose the initial states $X^{n,i}_0$ are i.i.d.\ with law $\lambda$. Let $(\Omega,\F,\FF,\PP ,W , B, \alpha^*,\mu, X^*)$ be a weak MFE. Let $S$ denote the  process $(\mu, B)$ which takes values in $\mathcal{S}=\P(\R^d) \times \R^d$.
Define $\alpha^{n,i} \in \A_n(S)$ for $n \ge i \ge 1$ by
\begin{align*}
\alpha^{n,i}(t,\bm x,s) &= \alpha^*(t,x^i_t,s),
\end{align*}
for $t \in [0,T]$, $\bm x=(x^1,\ldots,x^n) \in (\C^d)^n$, and $s \in C([0, T]; \P(\R^d) \times \R^d)$. Then for each $n$, $(\alpha^{n,1},\ldots,\alpha^{n,n})$ is an $S$-closed-loop $\epsilon_n$-Nash equilibrium, where $\epsilon_n \geq 0$ with $\epsilon_n \to 0$, and the associated measure flow $\mu^n$ satisfies $\L(\mu^n, B)\to \L(\mu,B)$ in $\P(C([0, T]; \P(\R^d) \times \R^d))$.
\end{theorem}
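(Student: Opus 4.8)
The plan is to show that the symmetric $n$-player profile built from the single mean-field control $\alpha^*$ is asymptotically a Nash equilibrium by comparing the $n$-player system directly to the mean field system along a single probability space, using the propagation-of-chaos estimates of \cite{lacker2018strong} that are available thanks to Assumption \ref{assumption:C}. First I would set up a common probability space carrying the driving data $B$, the i.i.d.\ idiosyncratic noises $(W^i)_{i\ge 1}$, the i.i.d.\ initial states $(X^{n,i}_0)$, and the equilibrium measure flow $\mu$ from the given weak MFE; note $S=(\mu,B)$ is a legitimate signal process since $(\mu,B)$ is independent of $(X^*_0,(W^i))$ by Definition \ref{def:weakMFE}(3), and $B$ is trivially $\FF^S$-adapted. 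On this space I would run, for each $n$, the closed-loop system \eqref{def:n_player_games} driven by $\bm\alpha^n$ (call its empirical measure $\mu^n$), and also the $n$ decoupled copies $\bar X^i$ of the MFE state process \eqref{def:weakMFE-SDE} (each solving $d\bar X^i_t=b(t,\bar X^i_t,\mu_t,\alpha^*(t,\bar X^i_t,\mu,B))dt+\sigma dW^i_t+\gamma dB_t$ with $\bar X^i_0=X^{n,i}_0$), which are conditionally i.i.d.\ given $(\mu,B)$ with common conditional law $\mu_t$ by the consistency condition. The first key step is a quantitative comparison: since the $\bar X^i$ are exchangeable and conditionally i.i.d.\ given $S$ with conditional marginal $\mu_t$, a conditional Glivenko--Cantelli/law-of-large-numbers argument gives $\E\,\W_0(\bar\mu^n_t,\mu_t)\to 0$ uniformly in $t$, where $\bar\mu^n=\frac1n\sum\delta_{\bar X^i}$; then Assumption \ref{assumption:C} (total-variation Lipschitz dependence of $b$ on the measure argument) together with Girsanov/Gronwall — this is exactly the regime of \cite{lacker2018strong} — yields $\E\sup_{t}|X^{n,i}_t-\bar X^i_t|\to 0$ and hence $\E\sup_t\W_0(\mu^n_t,\mu_t)\to 0$, and in particular $\L(\mu^n,B)\to\L(\mu,B)$, giving the last claim of the theorem.

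The second key step is the $\epsilon_n$-equilibrium property. Fix player $1$ (by symmetry) and an arbitrary deviation $\beta\in\A_n(S)$. Writing $Y$ for player $1$'s state under $(\beta,\alpha^{n,2},\ldots,\alpha^{n,n})$ and keeping the other players on $\alpha^*$, I would again invoke the total-variation stability (Assumption \ref{assumption:C}) to show that the empirical measure of players $2,\ldots,n$ stays $o(1)$-close to $\mu$ in expectation \emph{uniformly over the deviation} $\beta$ — the point is that one player's deviation perturbs the $n$-particle empirical measure by $O(1/n)$ in Wasserstein/TV, and the McKean--Vlasov stability then propagates this to an $o(1)$ bound on $\E\sup_t\W_0(\mu^{n,-1}_t,\mu_t)$ with a rate not depending on $\beta$. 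Since $(b,f,g)$ are bounded and continuous (here $p=0$), this lets me replace $\mu^n_t$ by $\mu_t$ in player $1$'s cost functional, up to an error $\delta_n\to0$ independent of $\beta$, reducing the deviation payoff $J^n_1(\beta,\alpha^{n,2},\ldots)$ to a \emph{standard single-agent stochastic control problem} with fixed random environment $(\mu,B)$: maximize $\E[\int_0^T f(t,Y_t,\mu_t,\beta(t,\cdots))dt+g(Y_T,\mu_T)]$ over $Y$ solving $dY_t=b(t,Y_t,\mu_t,\beta_t)dt+\sigma dW^1_t+\gamma dB_t$. Here I would note that in this one-player problem, by the classical equivalence of open-loop, closed-loop and Markovian controls for a controlled non-degenerate diffusion with a fixed exogenous environment, the $S$-closed-loop deviation $\beta$ cannot outperform the best semi-Markov control of the form $\alpha(t,Y_t,\mu,B)$; and that best value is exactly the left side of \eqref{def:optimality_part_generalized_MFE}, i.e.\ it is attained (to within $0$) by $\alpha^*$, by the optimality property (6) of the weak MFE. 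Combining the two replacements, $J^n_1(\alpha^{n,1},\ldots,\alpha^{n,n})\ge J^n_1(\beta,\alpha^{n,2},\ldots,\alpha^{n,n})-\epsilon_n$ with $\epsilon_n:=2\delta_n+\delta'_n\to0$, where $\delta'_n$ comes from comparing the true $n$-player equilibrium payoff to its mean-field value; symmetry gives the same for every player.

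The main obstacle I anticipate is making the error bound $\delta_n$ in the deviation step \emph{uniform over all} $\beta\in\A_n(S)$, since $\beta$ enters the drift of $Y$ and hence could in principle feed back into the empirical measure and the stability estimates; the resolution is that $\beta$ only drives the \emph{single} coordinate $Y$, the other $n-1$ coordinates solve a McKean--Vlasov-type system whose empirical measure is controlled by the $\mu$-Lipschitz (TV) stability uniformly in the perturbation, and the contribution of the deviating coordinate to $\mu^{n,-1}$ is literally excluded. A secondary technical point is the careful bookkeeping of the conditional-i.i.d.\ structure given $S=(\mu,B)$ when invoking \cite{lacker2018strong}: the cited propagation-of-chaos results are for McKean--Vlasov systems with a \emph{deterministic} flow, so one applies them conditionally on $(\mu,B)=(m,w)$ for a.e.\ realization, where $m$ plays the role of the (now fixed) mean-field flow, and then integrates; the consistency condition $\mu_t=\L(X^*_t\mid\F^{\mu,B}_t)$ is exactly what guarantees that the conditional McKean--Vlasov fixed point is the conditional law of $\bar X^i$, so the chaos estimate applies verbatim. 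Finally, tightness and the identification of the limit in the statement are immediate once $\E\sup_t\W_0(\mu^n_t,\mu_t)\to0$ is established, so no separate compactness argument is needed here.
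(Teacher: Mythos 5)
Your first step (propagation of chaos for the symmetric profile, run conditionally on $S=(\mu,B)$ so that the results of \cite{lacker2018strong} apply to a frozen mean-field flow, then integrated) is essentially the paper's Step 1, including the observation that the consistency condition identifies the conditional McKean--Vlasov solution; the paper additionally records a ``strong'' convergence $\langle\mu^n_t,\varphi(S,\cdot)\rangle\to\langle\mu_t,\varphi(S,\cdot)\rangle$ for merely measurable $\varphi$, which it needs because $\alpha^*$ is not continuous, and it handles the common noise by the explicit change of variables $\widetilde X_t=X_t-\gamma B_t$. Your stability claim for the empirical measure under a single deviation is also sound (the paper gets it via a Girsanov change of measure whose density has moments bounded uniformly in $\beta$ because $b$ is bounded).

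The genuine gap is in your second step, at the sentence invoking ``the classical equivalence of open-loop, closed-loop and Markovian controls for a controlled non-degenerate diffusion with a fixed exogenous environment.'' After you replace $\mu^n$ by $\mu$, the deviating control $\beta(t,\bm Y,S)$ still depends on the trajectories $Y^2,\ldots,Y^n$ of the other players, hence on the independent noises $W^2,\ldots,W^n$; the reduced problem is therefore a control problem with \emph{extra exogenous randomization}, in a \emph{random} environment $(\mu,B)$ that is not $B$-measurable. The assertion that such a control cannot beat the best semi-Markov control $\alpha(t,Y_t,\mu,B)$ --- which is the only class against which the optimality property (6) of Definition \ref{def:weakMFE} says anything --- is precisely the hard part, and it is not a citation to a classical result. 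The paper spends Steps 4b--4c on it: it passes to relaxed controls and extracts a weak limit $(Y,\Lambda,\nu,W,S)$ of the deviating player's tuple; it proves a \emph{compatibility} condition $\F^{Y,\Lambda}_t\indep\F^{Y_0,W,S}_T\,|\,\F^{Y_0,W,S}_t$ (itself obtained by a limiting argument from the $n$-player systems), without which conditioning on $\F^S_T$ in the It\^o expansion is not legitimate; it then projects the relaxed drift onto $\F^S_t$ to obtain a Fokker--Planck SPDE for $\eta_t=\L(Y_t\,|\,\F^S_t)$, uses the measurable-selection/convexity Assumption (\ref{assumption:A}.5) to replace the relaxed control by a strict semi-Markov one, and finally invokes the superposition principle (Theorem \ref{th:superposition}) to realize $\eta$ as the conditional law of a controlled state $Z$ whose value dominates the deviation's value. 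Your proposal skips all of this (compatibility, the relaxed-control/convexity issue, and the mimicking step), so as written the reduction to property (6) is unjustified. A secondary, fixable point: rather than seeking an error $\delta_n$ uniform over all $\beta\in\A_n(S)$, it is cleaner (and is what the paper does) to pick a $\tfrac1n$-optimal deviation $\beta^n$ for each $n$ and argue along subsequences.
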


The proof can be found in Section \ref{se:constructing_nash_eq}.
Note if the given MFE is \emph{strong} instead of \emph{weak}, then $\mu$ is $\FF^B$-adapted, and the signal process may be taken to be $S=B$.
It is similar in spirit but significantly more involved than that of \cite[Theorem 3.10]{Lacker_closedloop}, which treated the case without common noise, and only \emph{strong} MFE.
In the case without common noise, an analogue of Theorem \ref{th:converselimit-strongMFE} is valid, with the signal process $S=\mu$.

It is not clear if Theorem \ref{th:converselimit-strongMFE} remains true if the \emph{S-closed-loop} controls are instead required be \emph{Markovian}. See \cite[Sections 2.4 and 7]{Lacker_closedloop} for additional discussion of the difficulties.
The recent paper \cite{Djete2021}, which appeared on arXiv soon after the first draft of the present paper, shows under somewhat different assumptions that the $n$-player approximate equilibria in Theorem \ref{th:converselimit-strongMFE} can be constructed as \emph{closed-loop} equilibria (i.e., with path-dependent controls,  belonging to the set $\A_n$ defined in the second bullet point of Section \ref{se:nplayergame}).

If one restricts attention further and only considers the \emph{true} equilibria (rather than approximate equilibria) for the $n$-player games, then one cannot even expect to obtain all of the \emph{strong} MFE as limit points; see the recent case studies  \cite{cecchin2018convergence,delarue2020selection,nutz2018convergence}.

The analogue of Theorem \ref{th:converselimit-strongMFE} in the open-loop common noise setting was shown in \cite[Theorem 2.11]{lacker2016general}, \cite[Volume II, Theorem 6.14]{carmona-delarue-book}, and more recently \cite[Section 5]{burzoni-campi}. In the closed-loop setting without common noise, this was shown for \emph{strong} MFE in \cite{Lacker_closedloop}, and see also \cite{converse_limit} for an adaptation to models with absorptions. The closest result to ours is \cite[Volume II, Theorem 6.15]{carmona-delarue-book}, which gives a result in the closed-loop common noise case, involving what they call \emph{generalized closed loop} or \emph{semi-closed loop} controls for the $n$-player games, which are essentially equivalent to our $S$-closed-loop controls. Their result involves a different and more complicated notion of weak MFE, closely related to that of \cite{carmona-delarue-lacker}, which is equivalent in a certain sense to our notion of weak MFE as explained in Section \ref{se:connectionsWeakMFE}. Neither the assumptions of our result nor those of \cite[Volume II, Theorem 6.15]{carmona-delarue-book} includes the other, but we highlight that the latter is based on entirely different arguments, relying on Lipschitz and smoothness assumptions and the construction of a decoupling field for an FBSDE system governing the MFG.

As mentioned before, the additional assumptions  in Theorem \ref{th:converselimit-strongMFE} are needed mainly in order to apply the strong propagation of chaos result of \cite{lacker2018strong}, which requires a bounded drift satisfying the TV-Lipschitz condition. Other results on propagation of chaos could be used here, for instance if we knew $\alpha^*$ to be continuous in the spatial variable, but we prefer to avoid imposing assumptions on the MFE control. The result of \cite{lacker2018strong} also requires i.i.d.\ initial positions (as opposed to merely Assumption \ref{assumption:B}), which we also use for technical reasons in the proof of Theorem \ref{th:converselimit-strongMFE} (namely to justify applying \cite[Lemma 2.1]{beiglbock2018denseness}). The boundedness of $f$ and $g$ in Theorem \ref{th:converselimit-strongMFE} is made for simplicity and could easily be relaxed. 

Note also that the total variation distance dominates the Wasserstein distance $\W_0$ defined in \eqref{def:W0}, which makes the Lipschitz assumption less restrictive than a $\W_0$-Lipschitz assumption. The total variation distance is not directly comparable to $\W_p$ for $p>0$, though.

\subsection{Ideas of the proof of the main result}\label{se:ideas_of_main_proof}

This section discusses some key ideas of the proof of Theorem \ref{th:mainlimit}. Several steps are similar to the case without common noise, detailed in \cite[Section 2.6]{Lacker_closedloop}, and we may similarly split the proof conceptually into three steps: Tightness, limiting dynamics, and optimality. The first two steps proceed similarly to \cite{Lacker_closedloop}, but with a \emph{stochastic} PDE replacing the Fokker-Planck PDE in \cite[Section 2.6.1]{Lacker_closedloop}, and with some additional care required to handle unbounded coefficients. So let us suppose  for the rest of the section that these first two steps are resolved, i.e., that we have already shown $(\mu^n,B)$ to be tight, and that for every subsequential limit $(\mu,B)$ can be  realized as part of a tuple  satisfying properties (1--5) of Definition \ref{def:weakMFE}, in particular for some semi-Markov control $\alpha^*$. We fix one such subsequential limit, and understand that all limits appearing below are taken along this same subsequence.

The third step, showing the optimality property (6) of Definition \ref{def:weakMFE}, is the most difficult, and this is where new ideas are needed. 
We begin as in \cite[Section 2.6.2]{Lacker_closedloop}.
Fix an arbitrary alternative semi-Markov control $\beta : [0,T] \times \R^d \times C([0,T]; \P^p(\R^d) \times \R^d) \to \R^d$. The idea is to try to transfer the assumed $n$-player Nash property to the limit, to show that $\alpha^*$ is superior to $\beta$. To do this, we allow each player in the $n$-player game to try out the control $\beta$ as a deviation, one at a time. 
Let $\bm{Y}^{n,k}=(Y^{n,k,1},\ldots,Y^{n,k,n})$ be the state processes associated with the controls $(\alpha^{n,1}, \ldots, \alpha^{n,k-1}, \beta, \alpha^{n,k+1}, \ldots, \alpha^{n,n})$, for $k=1, \ldots, n$. That is,
\begin{align*}
dY^{n,k,k}_t &=  b (t,Y^{n,k,k}_t,\mu^{n,k}_t, \beta(t,Y^{n,k,k}_t,\mu^{n,k}, B))dt + \sigma dW^{k}_t + \gamma dB_t, \\
dY^{n,k,i}_t &= b(t,Y^{n,k,i}_t,\mu^{n,k}_t,\alpha^{n,i}(t,\bm{Y}^{n,k}, S))dt + \sigma dW^{i}_t + \gamma dB_t, \quad i \neq k, \\
\mu^{n,k}_t &= \frac{1}{n}\sum_{j=1}^n\delta_{Y_t^{n,k,j}} \qquad \bm{Y}^{n, k}_0 = \bm{X}^n_0.
\end{align*}
Apply the $\epsilon_n$-Nash property to each player, and average over the players to get
\begin{align}
\begin{split}
 \frac{1}{n} \sum_{k=1}^n & \E\left[\int_0^T f(t, X_t^{n,k}, \mu_t^{n}, \alpha^{n,k}(t, \bm{X}^{n}, S))\,dt + g(X_T^{n,k}, \mu_T^{n}) \right] \\
    & \ge \frac{1}{n} \sum_{k=1}^n  \E\left[\int_0^T f(t, Y_t^{n,k,k}, \mu_t^{n,k}, \beta(t, Y_t^{n,k,k}, \mu^{n,k}, B))\,dt + g(Y_T^{n,k,k}, \mu_T^{n,k}) \right]  - \epsilon_n.
\end{split} \label{def:ideas_of_proof_Nash_eq}
\end{align}
Note that this is allowed because $B$ is $S$-adapted. The left-hand side can be expressed as the expectation of an integral of the extended empirical measure $\frac{1}{n}\sum_{k=1}^n \delta_{(X^{n,k},\alpha^{n,k})}$, where $\alpha^{n,k}$ are viewed in a suitable space of relaxed controls. It is then not difficult, using the results for the omitted ``limiting dynamics" step, to show that the limsup of the left-hand side is no more than
\begin{align*}
\E\left[\int_0^T f(t, X_t^*, \mu_t, \alpha^*(t, X_t^*, \mu, B))\,dt + g(X_T^*, \mu_T)\right],
\end{align*}
which is exactly the left-hand side of \eqref{def:optimality_part_generalized_MFE}.
The most delicate step is to show that the right-hand of \eqref{def:ideas_of_proof_Nash_eq} converges to the right-hand side of \eqref{def:optimality_part_generalized_MFE}. Note first that the former can be rewritten as
\begin{align}
\frac{1}{n} \sum_{k=1}^n \E\left[ \Gamma(Y^{n,k,k},\mu^{n,k},B)\right], \label{idea:LHS1}
\end{align}
for a suitable functional $\Gamma$, which is continuous if we assume $\beta$ to be continuous, which can be justified by an approximation argument. 

To study the convergence of \eqref{idea:LHS1}, we begin with the same change of measure argument as in \cite[Section 2.6.2]{Lacker_closedloop}. Recall that we write $(\Omega^n,\F^n,\FF^n,\PP^n)$ for the filtered probability space supporting the $n$-player game. Define $\zeta^{n,k}$ as the unique solution of
\begin{align*}
d\zeta^{n,k}_t = \zeta_t^{n,k} \sigma^{-1} \Big[b(t, X_t^{n,k}, \mu_t^{n}, \beta(t, X_t^{n,k}, \mu^n, B)) - b(t, X_t^{n,k}, \mu_t^{n},\alpha^{n,k}(t, \boldsymbol{X}^n, S)) \Big]\cdot dW_t^k,
\end{align*}
with $\zeta^{n,k}_0=1$. By Girsanov's theorem, the law of $(\bm{Y}^{n,k},B)$ under $\PP^n$ is equal to the law of $(\bm{X}^n,B)$ under the measure $\zeta^{n,k}_T\,d\PP^n$. Hence, \eqref{idea:LHS1} becomes
\begin{align}
\frac{1}{n} \sum_{k=1}^n \E\left[ \Gamma(Y^{n,k,k},\mu^{n,k},B)\right] = \frac{1}{n} \sum_{k=1}^n \E\left[\zeta^{n,k}_T \Gamma(X^{n,k},\mu^{n},B)\right]. \label{idea:LHS2}
\end{align}
We express this in terms of the empirical measure $\widetilde\mu^n_t := \frac{1}{n}\sum_{k=1}^n \delta_{(X^{n,k}_t,\zeta^{n,k}_t)}$ of a $(d+1)$-dimensional particle system. Using Fubini and the martingale property of $\zeta^{n,k}$, \eqref{idea:LHS2} becomes
\begin{align*}
\E\left[ \int_0^T \int_{\R^{d+1}} y f(t,x,\mu^n_t,\beta(t,x,\mu^n,B))\,\widetilde\mu^n_t(dx,dy)\,dt + \int_{\R^{d+1}} yg(x,\mu^n_T)\,\widetilde\mu^n_T(dx,dy)\right].
\end{align*}

In the case without common noise, the very involved approach of \cite[Proposition 5.6]{Lacker_closedloop} is to describe the support of all subsequential limit points of a path-space analogue of $\widetilde\mu^n$, augmented to include the idiosyncratic Brownian motions, in terms of a certain martingale problem. We take a different approach here, better-suited to the case with common noise. The idea is as follows. By applying It\^o's formula, we identify the dynamics of $\langle \widetilde\mu^n_t,\varphi\rangle$, for smooth test functions $\varphi$ on $\R^{d+1}$. Passing to the limit, we find that any limit point $\widetilde\mu$ satisfies a certain (degenerate) Fokker-Planck SPDE on $\R^{d+1}$, in the weak sense, which we do not know to be well-posed. However, applying test functions of the form $(x,y) \mapsto y\varphi(x)$, we can show that the measure flow $\nu_t(dx) = \int_\R y\,\widetilde\mu_t(dx,dy)$ satisfies a Fokker-Planck SPDE which does turn out to be well-posed:
\begin{align*}
d\langle\nu_t,\varphi\rangle &= \left\langle \nu_t, \nabla\varphi^\top b(t,\cdot,\mu_t,\beta(t,\cdot,\mu,B)) + \frac12\mathrm{tr}[(\sigma\sigma^\top + \gamma\gamma^\top)\nabla^2\varphi]\right\rangle\,dt +\langle \nu_t,\nabla\varphi^\top\rangle \gamma  dB_t,
\end{align*}
for $\varphi \in C^\infty_c(\R^d)$.
This is exactly the SPDE associated with the conditional laws of the SDE
\begin{align*}
dX_t = b(t,X_t,\mu_t,\beta(t,X_t,\mu,B))dt + \sigma dW_t + \gamma dB_t,
\end{align*}
and the rigorous passage from the Fokker-Planck SPDE to this (well-posed) SDE takes advantage of the recent superposition principle of \cite{superposition_theorem} (recalled in Theorem \ref{th:superposition} below).

This argument of passing from $\widetilde\mu_t$ (whose first marginal is $\mu_t$) to $\nu_t$ can be seen as implementing Girsanov's theorem at the level of the Fokker-Planck equation. The essence of the idea is captured by the following formal argument. Suppose that $b_i:\R^d \mapsto \R^d$  are some given nice drifts, for $i=1,2$.
 Suppose $(\overline{m}_t)_{t \in [0,T]} \in C([0,T];\P(\R^d \times \R_+))$ satisfies $\int_{\R^d \times \R} y\,\overline{m}_0(dx,dy)=1$ and the degenerate Fokker-Planck equation
\begin{align*}
\frac{d}{dt}\langle \overline{m}_t,\varphi\rangle = \langle \overline{m}_t, \overline{L}\varphi \rangle, \ \ \varphi \in C^\infty_c(\R^{d+1}),
\end{align*}
where $\overline{L}$ acts on a function $\varphi(x,y)$ on $\R^d \times \R$ via
\begin{align*}
\overline{L}\varphi(x,y) = \ &\nabla_x\varphi(x,y) \cdot b_1(x) + \tfrac12\Delta_x\varphi(x,y) +  y \partial_y\nabla_x \varphi(x,y) \cdot (b_2(x)-b_1(x)) \\
	&+\tfrac12y^2 \partial_{yy}\varphi(x,y)|b_2(x)-b_1(x)|^2.
\end{align*}
Note that $\int_{\R^d \times \R} y\,\overline{m}_t(dx,dy)=1$ for all $t > 0$ since it is assumed true at $t=0$.
On the one hand, if $m^1_t \in \P(\R^d)$ denotes the first marginal of $\overline{m}_t$, then choosing test functions independent of $y$ shows that $m^1$ satisfies the Fokker-Planck equation
\begin{align*}
\frac{d}{dt}\langle m^1_t,\varphi \rangle = \left\langle m^1_t, \nabla\varphi \cdot b_1 + \tfrac12\Delta \varphi\right\rangle, \ \ \varphi \in C^\infty_c(\R^d).
\end{align*}
On the other hand, defining $m^2_t \in \P(\R^d)$ via $\langle m^2_t,\varphi\rangle = \int_{\R^d \times \R} y \varphi(x)\,\overline{m}_t(dx,dy)$, we may apply the equation for $\overline{m}$ with test functions of the form $(x,y)\mapsto y\varphi(x)$ to find that $m^2$ satisfies 
\begin{equation*}
\frac{d}{dt}\langle m^2_t,\varphi \rangle = \left\langle m^2_t, \nabla\varphi \cdot b_2 + \tfrac12\Delta \varphi\right\rangle, \ \ \varphi \in C^\infty_c(\R^d).
\end{equation*}
The use of this argument is that it connects $m^1$ and $m^2$, viewing both as descending from the measure flow $\overline{m}$. To see more clearly the connection with Girsanov's theorem, note formally $\overline{m}_t = \L(X_t, \xi_t)$ for $t \in [0, T]$, where $(X,\xi)$ solves the SDE system 
\begin{align*}
d X_t & = b^1(X_t)dt + d W_t, \qquad 
d \xi_t  = \xi_t\left(b^2(X_t) - b^1(X_t)\right) \cdot d W_t,
\end{align*}
and we recognize that $\xi$ is a Dol\'eans-Dade exponential.

\begin{remark} \label{re:possible_extensions}
While we allow unbounded coefficients, we still require compact action space $A$. This is unfortunately restrictive, but it appears to be quite difficult to overcome. Natural assumptions to try to work with include linear growth of $b(t,x,m,a)$ with respect to $a$, and polynomial growth and coercivity assumptions on the objective functions $f$ and $g$. But to obtain tightness, some uniform integrability is required of $\zeta^{n,k}$, as well as $\zeta^{n,k}_T g(X_T^{n,k},\mu^n_T)$ and the analogous $f$ term. It does not appear that $\zeta^{n,k}$ has a moment of order $p > 1$ which is bounded in $n$. Even a more forgiving $L\log L$ (or entropy) bound would lead to the quantity
\begin{align*}
\frac{1}{n}\sum_{k=1}^n \E\int_0^T |\alpha^{n,k}(t, \bm{Y}^{n,k})|^2dt.
\end{align*}
This a puzzling quantity. There seems to be no way to use a coercivity assumption to obtain an estimate for it, because $\alpha^{n,k}$ does not appear in the dynamics of $\bm{Y}^{n,k}$. We did not find any truncation arguments  capable of bypassing this difficulty. This difficulty would be resolved if we imposed the constraint that controls must grow at most linearly in $(x,m)$, with the constant of linear growth being bounded uniformly in $n$, but such an a priori constraint would be unnatural and difficult to verify.
\end{remark}

\section{Estimates} \label{se:estimates}

This section states two simple moment estimates that we will use throughout our paper. The first deals with the $n$-player system \eqref{def:n_player_games}, whereas the second deals with the solution $X^*$ of the McKean-Vlasov equation from Definition \ref{def:weakMFE}.
In the following, define the truncated supremum norm $\|x\|_t = \sup_{s \in [0,t]}|x_s|$ for $x \in \C^d = C([0,T];\R^d)$ and $t \in [0,T]$. Assumptions \ref{assumption:A} and \ref{assumption:B} hold throughout this section.

\begin{lemma}\label{le:inequalities}
There exists a constant $C < \infty$, depending only on $p$, $p'$, $T$ and the constant $c_1$ of assumption (\ref{assumption:A}.3), such that the following holds:
Let $n \in \N$. For some signal process $S$, let $\alpha^1,\ldots,\alpha^n \in \A_n(S)$, and let $\bm X^n=(X^{n,1},\ldots,X^{n,n})$ be the corresponding solution of the SDE system \eqref{def:n_player_games}.
For $k=1,\ldots,n$ and $t\in [0, T]$, we have
\begin{align}
\|X^{n,k}\|_t^{p'} \leq C\Bigg(1 +&   |X_0^{n,k}|^{p'} + \frac{1}{n} \sum_{j=1}^n |X_0^{n,j}|^{p'} + \|W^{k}\|_t^{p'} + \frac{1}{n} \sum_{j=1}^n \|W^{j}\|_t^{p'} +\|B\|_t^{p'}\Bigg), \  \ a.s.  \nonumber \\
    \frac{1}{n} \sum_{k=1}^n \E\|X^{n,k}\|_T^{p'} & \leq C\Bigg(1 + \frac{1}{n} \sum_{k=1}^n\E |X_0^{n,k}|^{p'}\Bigg),     \label{pf:nice_estimates_1} \\
    \E\|X^{n,k}\|_T^{p'} & \leq C\Bigg(1 + \E|X_0^{n,k}|^{p'} + \frac{1}{n} \sum_{j=1}^n\E |X_0^{n,j}|^{p'}\Bigg).
    \label{pf:nice_estimates}
\end{align}
In particular, $\frac{1}{n} \sum_{k=1}^n \E\|X^{n,k}\|_T^{p'}$ is bounded uniformly with respect to $n$, and the choice of controls $\alpha^1,\ldots,\alpha^n \in \A_n(S)$.
\end{lemma}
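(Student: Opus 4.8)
The plan is a standard Gronwall-type argument applied pathwise. Start from the integrated form of \eqref{def:n_player_games}:
\begin{align*}
X^{n,k}_t = X^{n,k}_0 + \int_0^t b(s,X^{n,k}_s,\mu^n_s,\alpha^k(s,\bm X^n,S))\,ds + \sigma W^k_t + \gamma B_t.
\end{align*}
Taking norms, using (\ref{assumption:A}.3), and the elementary bound $\left(\int_{\R^d}|z|^p\,\mu^n_s(dz)\right)^{1/(1\vee p)} = \left(\frac1n\sum_j |X^{n,j}_s|^p\right)^{1/(1\vee p)} \le C(1 + \frac1n\sum_j \|X^{n,j}\|_s)$ (using $p \le p' $, $p\le 2$, and convexity/monotonicity of $t\mapsto t^{1/(1\vee p)}$ together with $a^q \le 1 + a$ for $q \le 1$, $a \ge 0$), we get, on the event where $p>0$,
\begin{align*}
\|X^{n,k}\|_t \le |X^{n,k}_0| + C\|W^k\|_t + C\|B\|_t + C\int_0^t\Big(1 + \|X^{n,k}\|_s + \tfrac1n\sum_j\|X^{n,j}\|_s\Big)\,ds,
\end{align*}
with the $X$-dependent terms simply absent when $p=0$. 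Averaging over $k$, the quantity $m_t := \frac1n\sum_k \|X^{n,k}\|_t$ satisfies $m_t \le A_t + 2C\int_0^t m_s\,ds$ where $A_t := \frac1n\sum_k|X^{n,k}_0| + C\frac1n\sum_k\|W^k\|_t + C\|B\|_t + Ct$ is nondecreasing, so Gronwall gives $m_T \le A_T e^{2CT}$. Feeding this back into the single-particle inequality and applying Gronwall once more to $\|X^{n,k}\|_t$ alone yields the first, pathwise, display in the statement (raising to the $p'$ power and using $(\sum a_i)^{p'} \le C\sum a_i^{p'}$ for a finite sum, absorbing constants).

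Next, take expectations. The terms $\E|X^{n,k}_0|^{p'}$ and $\frac1n\sum_j\E|X^{n,j}_0|^{p'}$ are controlled by Assumption \ref{assumption:B}. For the Brownian terms, $\E\|W^k\|_T^{p'}$ and $\E\|B\|_T^{p'}$ are finite constants depending only on $p'$, $d$, $T$ by the Burkholder–Davis–Gundy inequality (or reflection/scaling). This immediately gives \eqref{pf:nice_estimates} from the pathwise bound, and averaging that over $k$ gives \eqref{pf:nice_estimates_1}. In the bounded-coefficient case $p'=p=0$, by Assumption (\ref{assumption:A}.4) we interpret $|X^{n,k}_0|^{p'}=1$ and the estimates become trivial moment bounds, consistent with the statement; I would note this case briefly. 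The uniformity in $n$ and in the controls is automatic since no constant above depends on either.

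I do not anticipate a serious obstacle here — this is the routine moment-estimate lemma. The only mild care points are: (i) treating the case $p=0$ versus $p>0$ uniformly, which the indicator $1_{\{p>0\}}$ in (\ref{assumption:A}.3) is designed to handle, so the drift bound is genuinely a constant when $p=0$; (ii) justifying $\left(\frac1n\sum_j|X^{n,j}_s|^p\right)^{1/(1\vee p)} \le C(1+\frac1n\sum_j\|X^{n,j}\|_s)$ cleanly, which follows from Jensen when $p\ge1$ and from subadditivity of $t\mapsto t^p$ when $p\le 1$ (so that $(\frac1n\sum_j|X^{n,j}_s|^p)^{1/p} \le (\frac1n\sum_j \|X^{n,j}\|_s^p)^{1/p}$, then bound by $1 + \frac1n\sum_j\|X^{n,j}\|_s$ using $a \le 1 + a^{1\vee p}$-type inequalities); and (iii) the two-stage Gronwall (first for the average $m_t$, then for the individual $\|X^{n,k}\|_t$), needed because the drift of particle $k$ sees both its own state and the empirical average. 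None of these require more than a few lines.
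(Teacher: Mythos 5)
Your overall strategy (linear growth plus a two-stage Gronwall, first for the empirical average and then for the individual particle) is exactly what the paper intends -- it explicitly leaves this proof to the reader as "a straightforward application of (\ref{assumption:A}.3) and Gronwall's inequality." However, your key step (ii) contains a genuine error when $p>1$. The inequality
\begin{align*}
\Big(\tfrac1n\textstyle\sum_j |X^{n,j}_s|^p\Big)^{1/p} \le C\Big(1+\tfrac1n\textstyle\sum_j\|X^{n,j}\|_s\Big)
\end{align*}
with $C$ independent of $n$ is false: take one particle at distance $n$ from the origin and the rest at $0$ with $p=2$; the left side is $\sqrt n$ while the right side is $2C$. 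Jensen in fact gives the \emph{reverse} inequality ($\ell^p$ averages dominate $\ell^1$ averages for $p\ge1$), so it cannot be invoked here. Consequently the Gronwall argument cannot be closed at the level of the first-power average $m_t=\frac1n\sum_k\|X^{n,k}\|_t$.

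The fix is to raise to the $p'$-th power \emph{before} averaging and applying Gronwall. From (\ref{assumption:A}.3), Jensen for the time integral, and $(\sum a_i)^{p'}\le C\sum a_i^{p'}$, one gets
\begin{align*}
\|X^{n,k}\|_t^{p'} \le C\Big(1+|X^{n,k}_0|^{p'}+\|W^k\|_t^{p'}+\|B\|_t^{p'}+\int_0^t\|X^{n,k}\|_s^{p'}\,ds+\int_0^t M_s^{p'}\,ds\Big),
\end{align*}
where $M_s=(\frac1n\sum_j|X^{n,j}_s|^p)^{1/(1\vee p)}$. Since $p'/(1\vee p)>1$ under (\ref{assumption:A}.4), Jensen now goes in the useful direction: for $p\ge1$, $M_s^{p'}=(\frac1n\sum_j|X^{n,j}_s|^p)^{p'/p}\le\frac1n\sum_j|X^{n,j}_s|^{p'}$, and for $0<p<1$ one uses $a^p\le1+a$ followed by convexity, giving in both cases $M_s^{p'}\le C(1+\frac1n\sum_j\|X^{n,j}\|_s^{p'})$. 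Averaging over $k$ then yields a closed Gronwall inequality for $m_t:=\frac1n\sum_k\|X^{n,k}\|_t^{p'}$ (which is a.s.\ finite and nondecreasing by continuity of the paths, so pathwise Gronwall applies), and feeding the resulting bound back into the single-particle inequality and applying Gronwall once more gives the pathwise display. The remainder of your argument -- taking expectations, using the finiteness of $\E\|W^k\|_T^{p'}$ and $\E\|B\|_T^{p'}$, and invoking Assumption \ref{assumption:B} only for the final uniform-boundedness claim -- is correct as written.
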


The proof of Lemma \ref{le:inequalities}  is left to the reader as a straightforward application of the linear growth assumption (\ref{assumption:A}.3) and Gronwall's inequality. The final claim of uniform boundedness of $\frac{1}{n} \sum_{k=1}^n \E\|X^{n,k}\|_T^{p'}$ follows from \eqref{pf:nice_estimates_1} and the moment bound of Assumption \ref{assumption:B}. 

\begin{lemma}\label{le:moment_property_definition_generalilzed}
There exists a constant $C < \infty$, depending only on $p$, $p'$, $T$ and the constant $c_1$ of assumption (\ref{assumption:A}.3), such that the following holds:
For any tuple $(\Omega,\F,\FF,\PP,W,B,\alpha^*,\mu,X^*)$ satisfying the conditions (1--5) of Definition \ref{def:weakMFE}, we have
\begin{align}
    \E \left[\sup_{t\in[0, T]} \int_{\R^d} |x|^{p'} \,\mu_t(dx)\right] \leq C\Bigg(1 + \int_{\R^d} |x|^{p'} \,\lambda (dx)\Bigg) < \infty.
    \label{pf:moment_property_mu_from_def}
\end{align}
Similarly, the process $X$ from \eqref{def:SDE-semimarkov-strong} satisfies
\begin{align}
\E\|X\|_T^{p'} \le  C\Bigg(1 + \int_{\R^d} |x|^{p'} \,\lambda (dx)\Bigg) < \infty. \label{moment:Xmfe}
\end{align}
\end{lemma}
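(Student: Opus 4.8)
The plan is to prove both moment bounds by a direct application of the linear growth assumption (\ref{assumption:A}.3), Gr\"onwall's inequality, and standard martingale moment estimates, exactly in the spirit of Lemma \ref{le:inequalities} but now for the McKean-Vlasov dynamics. I would treat \eqref{moment:Xmfe} first since the solution $X$ of \eqref{def:SDE-semimarkov-strong} is an honest (strong) solution of an SDE, and then deduce \eqref{pf:moment_property_mu_from_def} by applying the same argument to $X^*$ together with the consistency relation $\mu_t = \L(X^*_t \mid \F^{\mu,B}_t)$.

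\smallskip

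First I would fix a tuple $(\Omega,\F,\FF,\PP,W,B,\alpha^*,\mu,X^*)$ satisfying (1--5) and let $X$ solve \eqref{def:SDE-semimarkov-strong} with $X_0 = X^*_0 \sim \lambda$. Writing $X_t = X_0 + \int_0^t b(s,X_s,\mu_s,\alpha(s,X_s,\mu,B))\,ds + \sigma W_t + \gamma B_t$, I would take suprema and $p'$-th powers, use the elementary inequality $|a_1+\cdots+a_m|^{p'} \le C_m(|a_1|^{p'}+\cdots+|a_m|^{p'})$, and bound the drift term via (\ref{assumption:A}.3): $|b(s,X_s,\mu_s,\alpha(\cdots))| \le c_1(1 + \|X\|_s 1_{\{p>0\}} + 1_{\{p>0\}}(\int |z|^p\mu_s(dz))^{1/(1\vee p)})$. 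Here the key observation is that the measure-dependent term is controlled: by Jensen/H\"older, $(\int |z|^p\,\mu_s(dz))^{1/(1\vee p)} \le (\int |z|^{p'}\,\mu_s(dz))^{1/(1\vee p')} \cdot(\text{const})$ when $p \le p'$, but more simply, since $\mu_t = \L(X^*_t\mid\F^{\mu,B}_t)$, one has $\int |z|^p\,\mu_s(dz) = \E[|X^*_s|^p \mid \F^{\mu,B}_s]$, so $\E\sup_s (\int|z|^p\mu_s(dz))^{p'/(p\vee 1)}$ is controlled by $\E\|X^*\|_T^{p'}$ via conditional Jensen and Doob's maximal inequality (applied to the martingale-like bound) — hence a bound on $X^*$ feeds into the bound on $X$. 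Taking expectations, using $\E\|W\|_T^{p'}, \E\|B\|_T^{p'} < \infty$ (Burkholder-Davis-Gundy or Doob), and then Gr\"onwall's inequality applied to $t \mapsto \E\|X\|_t^{p'}$ (which is finite by a standard localization/stopping-time argument), yields \eqref{moment:Xmfe}, once we have \eqref{pf:moment_property_mu_from_def}.

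\smallskip

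For \eqref{pf:moment_property_mu_from_def} I would run the same computation with $\alpha^*$ in place of $\alpha$, i.e.\ on $X^*$ itself. The self-referential nature (the bound on $\E\|X^*\|_t^{p'}$ involving $\E[\int|z|^p\mu_s(dz)]$ which in turn is $\le \E\|X^*\|_s^{p'}$-controlled) closes via Gr\"onwall, giving $\E\|X^*\|_T^{p'} \le C(1 + \int|x|^{p'}\lambda(dx))$. Then, since $\sup_t \int |x|^{p'}\,\mu_t(dx) = \sup_t \E[\,|X^*_t|^{p'}\mid \F^{\mu,B}_t\,] \le \sup_t \E[\,\|X^*\|_T^{p'}\mid\F^{\mu,B}_t\,]$ and the right side is a c\`adl\`ag (indeed continuous, after choosing a good version) nonnegative martingale, Doob's $L^1$ maximal inequality gives $\E\sup_t \E[\|X^*\|_T^{p'}\mid\F^{\mu,B}_t] \le$ (a constant times) $\E\|X^*\|_T^{p'}$ — wait, Doob's inequality in $L^1$ only gives a weak-type bound, so instead I would argue more directly: $\E\sup_t\int|x|^{p'}\mu_t(dx)$ need not be finite a priori, but using that for each fixed $t$, $\E\int|x|^{p'}\mu_t(dx) = \E|X^*_t|^{p'} \le \E\|X^*\|_T^{p'}$, and that the process $t\mapsto \int|x|^{p'}\mu_t(dx)$ admits the dominating supermartingale $\E[\,\|X^*\|_T^{p'}\mid\F^{\mu,B}_t\,]$, one gets the supremum bound either by running the Gr\"onwall estimate pathwise on $\mu$ directly (mimicking the pathwise bound in Lemma \ref{le:inequalities}, treating $\mu$ as the conditional law and using the SPDE or the McKean-Vlasov SDE structure) or by a monotone-class/Fatou argument. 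The case $p'=p=0$ is trivial since then everything is bounded. Finally, feeding the now-established bound \eqref{pf:moment_property_mu_from_def} back into the $X$-computation closes \eqref{moment:Xmfe}.

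\smallskip

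The main obstacle I anticipate is handling the supremum \emph{inside} the expectation in \eqref{pf:moment_property_mu_from_def} cleanly: $\mu_t$ is only a conditional law, so one cannot just move the supremum past the conditional expectation for free. The cleanest route is probably to avoid conditional expectations entirely and instead derive a \emph{pathwise} a.s.\ bound for $\|X^*\|_t^{p'}$ of the form $\|X^*\|_t^{p'} \le C(1 + |X^*_0|^{p'} + \|W\|_t^{p'} + \|B\|_t^{p'} + \int_0^t (\int|z|^p\mu_s(dz))^{p'/(1\vee p)}\,ds)$ — which after taking conditional expectation given $\F^{\mu,B}_T$ and using $\int|z|^{p'}\mu_t(dz) \le \E[\|X^*\|_t^{p'}\mid\F^{\mu,B}_T]$ plus the remark after Definition \ref{def:weakMFE} (that $\mu_t = \L(X^*_t\mid\F^{\mu,B}_T)$) — gives a closed Gr\"onwall inequality for $h(t) := \E[\,\sup_{s\le t}\int|z|^{p'}\mu_s(dz)\,]$ after one more application of a maximal inequality to the $W$ and $B$ terms. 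Everything else is routine.
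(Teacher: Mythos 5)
Your final route (pathwise linear-growth plus Gr\"onwall bound on $\|X^*\|_t^{p'}$, conditioning on the common-noise filtration and using the independence of $(X^*_0,W)$ from $(\mu,B)$ to turn it into an a.s.\ bound on the moments of $\mu_t$, closing the self-referential estimate by another Gr\"onwall, and using monotonicity in $t$ of the right-hand side to absorb the supremum) is correct and is essentially the paper's proof. The only cosmetic differences are that the paper conditions on $\F^{\mu,B}_t$ rather than $\F^{\mu,B}_T$ and closes the Gr\"onwall loop first at the exponent $p\vee 1$ (where the integrand's power is exactly $1$) before plugging back in at $p'$, whereas you close directly at $p'$ via a power-mean inequality; both work.
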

\begin{proof}
For $p=p'=0$ there is nothing to prove, so we focus on the other case, $0 < p \le p \vee 2 < p'$. Let $Y^{(q)}_t = \sup_{s \in [0,t]}\int_{\R^d} |x|^{q}\,\mu_s(dx)$ for $q > 0$. Note that $Y^{(p)}_t < \infty$ a.s.\ since $\mu$ is a continuous $\P^p(\R^d)$-valued process.
In the following, $C < \infty$ denotes a constant which may vary from line to line but depends only on $p$, $p'$, $T$ and the constant $c_1$ of assumption (\ref{assumption:A}.3).
Using the SDE solved by $X^*$ and assumption (\ref{assumption:A}.3), we have for all $t \in [0,T]$ and $q \in [p \vee 1,p']$,
\begin{align*}
   |X^*_t|^{q} & \leq C\Big(1 + |X^*_0|^{q} +\int_0^t |X^*_s|^{q} ds +   \int_0^t (Y^{(p)}_s)^{q/(p\vee 1)} \, ds + \|W\|_t^{q} + \|B\|_t^{q} \Big)
\end{align*}
Using Gronwall's inequality, we have 
\begin{align}
|X^*_t|^{q} \leq C\Big(1 + |X^*_0|^{q} + \int_0^t (Y^{(p)}_s)^{q/(p\vee 1)}  \, ds + \|W\|_t^{q} + \|B\|_t^{q} \Big) \label{pf:Gronwall1-0}
\end{align}
Take the conditional expectation given $\F^{\mu,B}_t$ on both sides of this inequality. Since $X^*_0$ and $W$ are independent of $\F^{\mu,B}_T$, we get
\begin{align*}
\int_{\R^d}|x|^{q}\,\mu_t(dx) = \E[|X_t^*|^{q} \,|\, \F^{\mu,B}_t ]\leq C\Big(1 + \E|X^*_0|^q + \int_0^t (Y^{(p)}_s)^{q/(p\vee 1)}  \, ds + \E\|W\|_t^{q} + \|B\|_t^{q} \Big)
\end{align*}
Apply this with $q=p \vee 1$, using $|x|^{p} \leq C(1 + |x|^{p\vee 1})$ and Gronwall's inequality, to get
\begin{align}
Y_t^{(p)} \leq C(1 + Y^{(p\vee 1)}_t) \leq C\Big(1 + \E|X^*_0|^{p\vee 1} + \E\|W\|_t^{p\vee 1}+ \|B\|_t^{p\vee 1} \Big).
 \label{pf:Gronwall1-1}
\end{align}
Apply this in \eqref{pf:Gronwall1-0} and take expectations to complete the proof. 
\end{proof}

\section{Proof of the main limit theorem} \label{se:mainlimitproof}

In this section, we prove Theorem \ref{th:mainlimit}. Assumptions \ref{assumption:A} and \ref{assumption:B} hold throughout this section. We consider a sequence of $\epsilon_n$-Nash equilibria $(\alpha^{n,1},\ldots,\alpha^{n,n}) \in \A_n^n(S)$ and denote by $\bm{X}^n=(X^{n,1},\ldots,X^{n,n})$ the corresponding state process in the $n$-player game, which is the weak solution of the SDE 
\begin{align}
dX^{n,i}_t &= b(t,X^{n,i}_t,\mu^n_t,\alpha^{n,i}(t,\bm{X}^n, S))dt + \sigma dW^{ i}_t + \gamma dB_t , \quad\quad \mu^n_t = \frac{1}{n}\sum_{k=1}^n\delta_{X^{n,k}_t}. \label{def:stateprocesses-limitsection}
\end{align}
Here $W^{1}, \cdots, W^{n}, B$ are independent Brownian motions and $S$ is a signal process (possibly different for each $n\in \N$ as explained in Section \ref{se:nplayergame}). The initial states $X^{n,1}_0,\ldots,X^{n,n}_0$ have a general distribution but, by Assumption \ref{assumption:B},  have a uniformly bounded $p'$-th moment and satisfy $\mu_0^n \to \lambda$ in probability in $\P^{p}(\R^d)$.

The proof is structured in three parts. The first and simplest part studies the tightness of the sequence $(\mu^n, B)$. The second step identifies the dynamics of its limit points; that is we show that the limit points satisfies the points (1--5) of Definition \ref{def:weakMFE} for some control $\alpha^*$. The third and most difficult step is to prove the optimality of this control, i.e., (6) of Definition \ref{def:weakMFE}.

\subsection{Relaxed controls and the extended empirical measure} \label{se:relaxedcontrols}
In this short section, we define the set $\V$ of \emph{relaxed controls} as the set of Borel measures $q$ on $[0, T] \times A$ with first marginal equal to Lebesgue measure. We equip $\V$ with the topology of weak convergence, which makes it a compact metric space, as $A$ is compact itself. We equip $\V$ with an arbitrary compatible metric, and the particular choice will be immaterial.

Each $q \in \V$ may be identified via disintegration with a (uniquely defined up to a.e.\ equality) measurable function $[0,T] \ni t \mapsto q_t \in \P(A)$, where $q(dt,da)=dtq_t(da)$. The natural filtration on $\V$ at time $t$ is the $\sigma$-field generated by the functions $\V \ni q \mapsto q(C)$, where $C \subset [0,t] \times A$ is Borel; in this filtration, there is a predictable version of the map $[0,T] \times \V \ni (t,q) \mapsto q_t \in \P(A)$, and this allows us to freely identify $\V$-valued random variables and measurable $\P(A)$-valued processes (see \cite[Lemma 3.2]{lacker2015mean}). We will abuse notation somewhat by writing $q_t$ for the value of this map at $(t,q)$, so that $q_t$ is well defined for \emph{every} $t$, not just \emph{almost every} $t$.

The control $\alpha^{n,k}$ of each player $k$ in the $n$-player game induces a $\V$-valued random variable,
\begin{align*}
\Lambda^{n,k}(dt,da) := dt\delta_{\alpha^{n,k}(t,\bm X^n,S)}(da).
\end{align*}
Rather than work directly with $\mu^n=(\mu^n_t)_{t \in [0,T]}$,
we will frequently work with the extended empirical measure,
\begin{align*}
\bm{\overline\mu}^n := \frac{1}{n}\sum_{k=1}^n\delta_{(X^{n,k},\Lambda^{n,k})},
\end{align*}
which is a random element of $\P(\C^d \times \V)$.

Generically, for a (random) element $\bm{\overline{m}}$ of $\P(\C^d \times \V)$, we will write $\bm{m}$ for the (random) element of $\P(\C^d)$ obtained by marginalizing, and we write $m=(m_t)_{t \in [0,T]}$ for the corresponding flow of time-$t$ marginals, a (random) element of $\CP$.
This way, we may write $\bm{\mu}^n$ for random element of $\P(\C^d)$ given by
\[
\bm{\mu}^n := \frac{1}{n}\sum_{k=1}^n\delta_{X^{n,k}},
\]
and $\mu^n = (\mu_t^n)_{t\in [0, T]}$ for the corresponding measure flow. Notice that the map 
\begin{align}
\P^r(\C^d) \ni \bm{m} \mapsto m:=(m_t)_{t \in [0,T]} \in C([0, T]; \P^r(\R^d)) \label{def:marginalmap}
\end{align}
is continuous for any $r \ge 0$.

\subsection{Tightness and continuity}

Recall that the exponents $p,p'$ have been introduced in Assumption \ref{assumption:A}. 
This section gives the fairly straightforward tightness argument, as well as a useful continuity result. Both are naturally stated in terms of the space $\P^p(\C^d \times \V)$. Recall that the compact space $\V$ is equipped with an arbitrary compatible metric, whereas $\C^d$ is equipped with the supremum norm $\|\cdot\|_T$. We then equip $\C^d \times \V$ with the $\ell^1$ (sum) metric and define the Wasserstein metric on $\P^p(\C^d \times \V)$ accordingly, but we need only topological properties: Convergence of a sequence $(\bm{\overline{m}}^n)$ in $\P^p(\C^d \times \V)$ is characterized by the convergence of $\langle \bm{\overline{m}}^n,\psi\rangle$ for every continuous function $\psi : \C^d \times \V \to \R$ such that $\sup_{x,q}|\psi(x,q)|/(1+\|x\|_T^p) < \infty$.

Note that by continuity of the natural (marginal) mappings from $\P^p(\C^d \times \V)$ to $\P^p(\C^d)$ and from  $\P^p(\C^d)$ to $C([0,T];\P^p(\R^d))$, as mentioned in \eqref{def:marginalmap}, it is enough to show tightness of $(\bm{\overline\mu}^n, B)$ to conclude the tightness of $(\mu^n, B)$.

\begin{lemma} \label{le:tightness-main}
The sequence $(\bm{\overline\mu}^n,B)$ is a tight family of $\P^{qp'}(\C^d \times \V) \times \C^d$-valued random variables, for any $q \in [0,1)$. For any limit point $(\bm{\overline\mu},B)$, we have $\mu_0 = \lambda$ a.s., and 
\begin{align}
\E\int_{\C^d} \|x\|_T^{p'} \,\bm{\mu}(dx)< \infty.   \label{def:bound_on_mu}
\end{align}
\end{lemma}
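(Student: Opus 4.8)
The plan is to establish tightness via moment bounds and then identify the limiting properties using convergence of marginals. First I would prove tightness of the marginal processes $X^{n,k}$, which follows from Lemma \ref{le:inequalities}: the almost-sure bound on $\|X^{n,k}\|_t^{p'}$ in terms of $|X_0^{n,k}|$, the empirical average of initial moments, $\|W^k\|_t$, the empirical average of $\|W^j\|_t$, and $\|B\|_t$, combined with the standard Kolmogorov-type continuity estimate for the SDE increments (using the linear growth (\ref{assumption:A}.3) and the Burkholder-Davis-Gundy inequality), shows that each $X^{n,k}$ lives in a compact subset of $\C^d$ with uniformly high probability. Since $\V$ is compact, each pair $(X^{n,k},\Lambda^{n,k})$ is tight in $\C^d \times \V$, and by the standard criterion (e.g.\ \cite[Proposition 2.2]{lacker2015mean} or Sznitman's argument), tightness of the empirical measures $\bm{\overline\mu}^n = \frac1n\sum_k \delta_{(X^{n,k},\Lambda^{n,k})}$ in $\P(\C^d \times \V)$ follows from tightness of the mean measures $\frac1n\sum_k \L(X^{n,k},\Lambda^{n,k})$. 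To upgrade to tightness in $\P^{qp'}(\C^d \times \V)$ for $q \in [0,1)$, I would use the uniform bound $\sup_n \frac1n\sum_k \E\|X^{n,k}\|_T^{p'} < \infty$ from \eqref{pf:nice_estimates_1}: this gives uniform integrability of $\|x\|_T^{qp'}$ under the (random) measures $\bm{\mu}^n$ for $q<1$, which is exactly what is needed to strengthen weak convergence to $\W_{qp'}$-convergence. Tightness of $B$ is immediate since its law is fixed.

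Next I would identify the two claimed properties of a limit point $(\bm{\overline\mu},B)$. For $\mu_0 = \lambda$ a.s.: the evaluation map $\bm{m}\mapsto m_0$ composed with the marginal maps is continuous from $\P^p(\C^d\times\V)$ into $\P^p(\R^d)$, so $\mu^n_0 = \bm\mu^n_0$ converges in distribution to $\mu_0$; but Assumption \ref{assumption:B} gives $\mu^n_0 \to \lambda$ in probability, hence in distribution, to the constant $\lambda$, and uniqueness of limits forces $\mu_0 = \lambda$ a.s. For the moment bound \eqref{def:bound_on_mu}: the functional $\bm{m} \mapsto \int_{\C^d} (\|x\|_T^{p'}\wedge M)\,\bm m(dx)$ is continuous and bounded on $\P(\C^d)$ for each truncation level $M$, so by weak convergence and \eqref{pf:nice_estimates_1},
\[
\E\int_{\C^d} (\|x\|_T^{p'}\wedge M)\,\bm\mu(dx) = \lim_n \E\int_{\C^d}(\|x\|_T^{p'}\wedge M)\,\bm\mu^n(dx) \le \sup_n \E\int_{\C^d}\|x\|_T^{p'}\,\bm\mu^n(dx) < \infty,
\]
and then monotone convergence as $M\to\infty$ yields \eqref{def:bound_on_mu}.

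I expect the main obstacle to be the careful passage to the limit for the unbounded moment functional, i.e.\ making sure that the $p'$-moment estimate survives the weak limit in the right space. The almost-sure bound in Lemma \ref{le:inequalities} is the key tool, since it ties $\|X^{n,k}\|_T^{p'}$ to quantities (initial moments, Brownian norms) whose laws are either fixed or controlled uniformly in $n$, giving the crucial uniform integrability; the truncation-and-monotone-convergence device above is the clean way to transfer this across the limit without needing $\|x\|_T^{p'}$ itself to be a continuous bounded test function. A secondary technical point is verifying the tightness criterion for empirical measures in the Wasserstein-$qp'$ topology rather than merely the weak topology, but this is standard once the uniform $p'$-moment bound is in hand, precisely because $q<1$ leaves a margin of integrability to spare.
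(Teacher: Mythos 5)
Your proposal is correct and follows essentially the same route as the paper: the averaged $p'$-moment bound from Lemma \ref{le:inequalities}, reduction to tightness of the mean measures (with compactness of $\V$ handling the control marginal and a standard path-regularity criterion handling the state marginal --- the paper invokes Aldous's criterion where you invoke a Kolmogorov/BDG estimate, and both work here since $p'>2$ whenever the coefficients are unbounded), the spare moment $q<1$ to upgrade to $\P^{qp'}$, Assumption \ref{assumption:B} for $\mu_0=\lambda$, and a Fatou/truncation argument for \eqref{def:bound_on_mu}. One small imprecision: Assumption \ref{assumption:B} controls only the averaged initial moments $\frac1n\sum_{k}\E|X_0^{n,k}|^{p'}$, so the individual laws $\L(X^{n,k})$ need not be tight uniformly in $(n,k)$ as you first assert; this is harmless because, as you then correctly note, only tightness of the mean measures $\frac1n\sum_{k}\L(X^{n,k},\Lambda^{n,k})$ is required, and that follows from the averaged bound alone.
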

\begin{proof}
Lemma \ref{le:inequalities} implies that 
\begin{align}
\sup_n\E\int_{\C^d} \|x\|_T^{p'} \,\bm{\mu}^n(dx) &= \sup_n\frac{1}{n} \sum_{k=1}^n \E\|X^{n,k}\|_T^{p'} \leq C\Bigg(1 + \sup_n\frac{1}{n} \sum_{k=1}^n\E |X_0^{n,k}|^{p'}\Bigg) < \infty. \label{pf:tight-moment1}
\end{align}
Now, suppose that $(\bm{\overline\mu},B)$ is some subsequential limit of $(\bm{\overline\mu}^n,B)$. 
By Fatou's inequality, the $\C^d$-marginal $\bm{\mu}$ of $\bm{\overline\mu}$ must then satisfy \eqref{def:bound_on_mu}. Moreover, $\L(\mu_0) = \delta_{\lambda}$, since $\mu^n_0 = \frac{1}{n}\sum_{k=1}^n\delta_{X^{n,k}_0}$ converges in probability to $\lambda$ by assumption.

It remains to prove the claimed tightness. 
As the marginal law of $B$ does not depend on $n$, it suffices to prove tightness of $(\L(\bm{\overline\mu}^n))_{n \in \N}\subset \P(\P^{qp'}(\C^d \times \V))$. In light of the moment bound \eqref{pf:tight-moment1}, it in fact suffices (e.g., by \cite[Corollary A.2]{lacker2015mean}) to show that the sequence of mean measures $(\E\bm{\overline\mu}^n)_{n\in\N} \subset \P(\C^d \times \V)$ is tight, where $\E\bm{\overline\mu}^n$ is defined by
\begin{align*}
\E\bm{\overline\mu}^n :=  \frac{1}{n} \sum_{k=1}^n \L(X^{n,k},\Lambda^{n,k}).
\end{align*}
To prove tightness of $(\E\bm{\overline\mu}^n)$, it suffices to prove tightness of the two sequences of marginals,
\begin{align*}
\frac{1}{n} \sum_{k=1}^n \L(X^{n,k}), \qquad \frac{1}{n} \sum_{k=1}^n \L(\Lambda^{n,k}).
\end{align*}
The latter sequence is tight because $\V$ is compact. The former we can prove to be tight using Aldous's criterion \cite[Theorem 16.11, Lemma 16.12]{kallenberg-foundations}; it suffices to show that
\begin{align}
\lim_{\delta \to 0} \sup_{n \in \N} \sup_{\tau,\tau'} \frac{1}{n} \sum_{k=1}^n \E \left[ |X_{\tau}^{n, k} - X_{\tau'}^{n,k}|^{r}\right] = 0,
    \label{pf:modified_Aldous}
\end{align}
for some $r > 0$, where the inner supremum is over all stopping times $(\tau,\tau')$ in $[0,T]$ such that $\tau \le \tau' \le \tau+\delta$.
This follows from a standard argument using the linear growth assumption (\ref{assumption:A}.3) along with the uniform moment bounds of Lemma \ref{le:inequalities}, and we omit the details.
\end{proof}

We will need to check the continuity of several functionals, and the following will cover our needs. We omit its proof, as it is a straightforward consequence of results in \cite[Appendix A]{lacker2015mean}.

\begin{lemma} \label{le:continuity}
Let $\psi : [0,T] \times \R^d \times \P^p(\R^d) \times A \to \R$ be jointly measurable. Assume $(x,m,a) \mapsto \psi(t,x,m,a)$ is continuous for each $t$, and assume there exists $c < \infty$ such that
\begin{align*}
|\psi(t,x,m,a)| &\le c\left( 1 + |x|^p + \int_{\R^d} |y|^p m(dy)\right),
\end{align*}
for all $(t,x,m,a)$. Then the following functional is continuous:
\begin{align*}
\P^p(\C^d \times \V) \ni \bm{\overline{m}} \mapsto \int_{\C^d \times A} \int_0^T \int_A \psi(t,x_t,m_t,a) \, q_t(da) \, dt \, \bm{\overline{m}}(dx,dq).
\end{align*}
\end{lemma}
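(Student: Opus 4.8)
The plan is to fix a convergent sequence $\bm{\overline{m}}^n\to\bm{\overline{m}}$ in $\P^p(\C^d\times\V)$ and show that the functional, call it $F$, satisfies $F(\bm{\overline{m}}^n)\to F(\bm{\overline{m}})$. The point to keep in mind is that $F(\bm{\overline{m}})$ depends on $\bm{\overline{m}}$ in two ways: directly, through the outer integration against $\bm{\overline{m}}(dx,dq)$, and indirectly, through the marginal flow $m=(m_t)_{t\in[0,T]}$ that is fed into $\psi$. First I would record the consequences of convergence in $\P^p(\C^d\times\V)$: continuity of the marginal maps in \eqref{def:marginalmap} gives $m^n\to m$ in $C([0,T];\P^p(\R^d))$, hence $\sup_{t}\W_p(m^n_t,m_t)\to0$ and $K:=\sup_{n}\sup_{t}\int_{\R^d}|y|^p\,m^n_t(dy)<\infty$; and since convergence in $\P^p(\C^d\times\V)$ is weak convergence together with convergence of $\int\|x\|_T^p\,\bm{\overline{m}}^n$, the envelope $\|x\|_T^p$ is uniformly integrable along $(\bm{\overline{m}}^n)_n$.

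The main step is to decompose $F(\bm{\overline{m}}^n)-F(\bm{\overline{m}})=A_n+B_n$, where $B_n=\langle\bm{\overline{m}}^n,\Phi\rangle-\langle\bm{\overline{m}},\Phi\rangle$ for the \emph{fixed} map $\Phi(x,q):=\int_0^T\int_A\psi(t,x_t,m_t,a)\,q_t(da)\,dt$ built with the \emph{limiting} flow $m$, and $A_n$ is the remaining error coming from replacing $m^n_t$ by $m_t$ inside $\psi$ while still integrating against $\bm{\overline{m}}^n$. For $B_n$: the growth bound on $\psi$ together with $\sup_t\int_{\R^d}|y|^p\,m_t(dy)<\infty$ gives $\sup_{x,q}|\Phi(x,q)|/(1+\|x\|_T^p)<\infty$, and $\Phi$ is continuous on $\C^d\times\V$. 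Indeed, if $x^j\to x$ in $\C^d$ and $q^j\to q$ in $\V$, set $h_j(t,a):=\psi(t,x^j_t,m_t,a)$ and $h(t,a):=\psi(t,x_t,m_t,a)$; these are uniformly bounded (the $\|x^j\|_T$ being bounded), each $h(t,\cdot)$ is continuous, and $\sup_{a\in A}|h_j(t,a)-h(t,a)|\to0$ for every $t$ by uniform continuity of $\psi(t,\cdot,\cdot,\cdot)$ on the compact set $(\{x^j_t:j\in\N\}\cup\{x_t\})\times\{m_t\}\times A$; hence $|\int h_j\,dq^j-\int h\,dq|\le\int_0^T\sup_{a}|h_j(t,a)-h(t,a)|\,dt+|\int h\,d(q^j-q)|\to0$, the first term by dominated convergence and the second by the continuity of $q\mapsto\int h\,dq$ on $\V$ for bounded Carath\'eodory $h$ (the relaxed-control continuity of \cite[Appendix A]{lacker2015mean}). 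With $\Phi$ continuous and of $\|x\|_T^p$-growth, $B_n\to0$ follows from the characterization of convergence in $\P^p(\C^d\times\V)$ recalled in the text.

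For $A_n$: fix $\epsilon>0$ and use uniform integrability to choose $R$ with $\sup_n\int_{\{\|x\|_T>R\}}(1+\|x\|_T^p)\,\bm{\overline{m}}^n(dx,dq)<\epsilon$. On $\{\|x\|_T\le R\}$ the integrand of $A_n$ is dominated by $\delta_n(t):=\sup_{|z|\le R,\,a\in A}|\psi(t,z,m^n_t,a)-\psi(t,z,m_t,a)|$; for each fixed $t$ the set $\{|z|\le R\}\times(\{m^n_t:n\in\N\}\cup\{m_t\})\times A$ is compact in $\R^d\times\P^p(\R^d)\times A$ and $\psi(t,\cdot,\cdot,\cdot)$ is uniformly continuous there, so $\delta_n(t)\to0$, while $\delta_n(t)\le 2c(1+R^p+2K)$ uniformly in $(n,t)$, whence $\int_0^T\delta_n(t)\,dt\to0$ by dominated convergence. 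On $\{\|x\|_T>R\}$ the growth bound controls the integrand of $A_n$ by $C(1+\|x\|_T^p+\int|y|^p\,m^n_t(dy)+\int|y|^p\,m_t(dy))$, whose $\bm{\overline{m}}^n$-integral over that set is $\le C'\epsilon$ by the choice of $R$ and $K<\infty$; thus $\limsup_n|A_n|\le C'\epsilon$, and letting $\epsilon\downarrow0$ gives $A_n\to0$.

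The hard part is the continuity of $\Phi$ used in the $B_n$ estimate: because $\psi$ is only assumed measurable, not continuous, in $t$, this is not a direct uniform-convergence argument but rests on the ``fibered'' continuity of relaxed-control integrals of Carath\'eodory integrands — precisely the machinery of \cite[Appendix A]{lacker2015mean} that lets this lemma be quoted there as a ``straightforward consequence''. A secondary subtlety, which is why one needs $\sup_t\W_p(m^n_t,m_t)\to0$ rather than merely $t$-pointwise convergence of the marginals, is that the measure argument fed into $\psi$ is itself a (continuous) function of $\bm{\overline{m}}$.
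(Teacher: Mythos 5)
Your proof is correct. Note that the paper itself gives no proof of this lemma at all — it simply declares it "a straightforward consequence of results in \cite[Appendix A]{lacker2015mean}" — so there is no argument in the paper to compare against line by line; what you have written is essentially the expected unwinding of those cited results. Your decomposition into $B_n$ (weak convergence against the fixed test function $\Phi$ built from the limiting flow $m$) and $A_n$ (the error from swapping $m^n_t$ for $m_t$ inside $\psi$) correctly isolates the two ways the functional depends on $\bm{\overline{m}}$, which is the one genuinely non-obvious point of the lemma. The three ingredients you invoke are all sound: (i) continuity of $q\mapsto\int h\,dq$ on $\V$ for bounded Carath\'eodory $h$ is exactly the relaxed-control continuity result from \cite[Appendix A]{lacker2015mean} that the authors have in mind; (ii) the compactness/uniform-continuity argument giving $\delta_n(t)\to 0$ for each fixed $t$ (the set $\{m^n_t\}_n\cup\{m_t\}$ is compact in $\P^p(\R^d)$ precisely because $\sup_t\W_p(m^n_t,m_t)\to 0$, which you correctly flag as the reason one needs convergence of the marginal \emph{flow} rather than pointwise convergence of marginals); and (iii) the uniform integrability of $\|x\|_T^p$ under $\W_p$-convergence, which handles the tail $\{\|x\|_T>R\}$ in the $A_n$ estimate. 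The only cosmetic remarks: the constant $2c(1+R^p+2K)$ should also absorb $\sup_t\int|y|^p\,m_t(dy)$ (finite since $m$ is a continuous $\P^p$-valued path), and in the case $p=0$ the whole argument degenerates to the bounded case where $\W_0$ metrizes weak convergence and the truncation at $R$ is unnecessary — but neither affects correctness.
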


\subsection{Identification of limiting dynamics} \label{se:mainproof-limitingdynamics} 

One of the key ideas in the closed-loop regime is to ``project away" randomness so that the SDEs are equivalent in laws to SDEs with closed loop coefficients. The next lemma provides one such projection argument, in a simple context that will come in handy in the proof of the subsequent Theorem \ref{th:limit-dynamics}. The results in this section are essentially extensions of the results of \cite[Section 5.3]{Lacker_closedloop}, though we streamline some of the arguments.
We omit the proof of the first lemma, as it is a straightforward adaptation of \cite[Lemma 5.2]{Lacker_closedloop}.

\begin{lemma} \label{le:conditional-exp-derivatives}
Suppose $(Y_t)_{t \in [0,T]}$ is a continuous stochastic process taking values in a Polish space $E$ and defined on some filtered probability space $(\Omega,\F,\FF, \PP)$ supporting a $d$-dimensional $\FF$-Brownian motion $B$. Suppose $h : E \rightarrow \R$ and  $g : E \rightarrow \R^d$ are continuous, with $g$ bounded, and suppose it holds that
\[
h(Y_t) = h(Y_0) + \int_0^ta_s \, ds + \int_0^t g(Y_s)^\top \, dB_s, \ \ a.s., \ \text{ for a.e. } t \in [0,T],
\]
where $(a_t)_{t \in [0,T]}$ is some $dt \otimes d\PP$-integrable real-valued process. Suppose $\widehat{a} : [0,T] \times C([0,T]; E \times \R^d) \rightarrow \R$ is a progressively measurable function satisfying
\[
\widehat{a}(t,Y, B) = \E[a_t \, | \, \F^{Y, B}_t], \ \ a.s., \ \text{ for a.e. } t \in [0,T].
\]
Then 
\[
h(Y_t) = h(Y_0) + \int_0^t\widehat{a}(s,Y,B)ds  + \int_0^t g(Y_s)^\top dB_s, \ \ \text{ for all } t \in [0,T], \ \ a.s.
\]
\end{lemma}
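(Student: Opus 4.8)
The plan is to reduce the statement to a claim about processes on the canonical path space, and then to use the tower property of conditional expectations together with a martingale/closability argument to justify passing the conditional expectation inside the stochastic integral. First I would fix a countable dense set of times and a countable family of bounded continuous test functions on $C([0,T];E\times\R^d)$ measurable with respect to $\F^{Y,B}_t$, so that it suffices to check, for each $0\le s<t\le T$ and each bounded $\F^{Y,B}_s$-measurable random variable $Z$, that
\begin{align*}
\E\!\left[Z\left(h(Y_t)-h(Y_s)-\int_s^t \widehat a(u,Y,B)\,du-\int_s^t g(Y_u)^\top dB_u\right)\right]=0.
\end{align*}
The term with $h$ is handled by the hypothesis that $h(Y_\cdot)$ is an Itô process with the given decomposition; the subtlety is only that the identity is stated for a.e.\ $t$, so one first establishes the displayed equality for a.e.\ pair $(s,t)$ and then upgrades to all $(s,t)$ by continuity of both $h(Y_\cdot)$ and the two integral terms (the drift integral being continuous in $t$ because $\widehat a(\cdot,Y,B)$ is $dt\otimes d\PP$-integrable, and the stochastic integral being continuous since $g$ is bounded, hence the integrand is in $L^2(dt\otimes d\PP)$).

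The heart of the matter is the identity
\begin{align*}
\E\!\left[Z\int_s^t a_u\,du\right]=\E\!\left[Z\int_s^t \widehat a(u,Y,B)\,du\right]
\qquad\text{and}\qquad
\E\!\left[Z\int_s^t g(Y_u)^\top dB_u\right]=\E\!\left[Z\int_s^t g(Y_u)^\top dB_u\right],
\end{align*}
the first of which follows from Fubini (legitimate by $dt\otimes d\PP$-integrability of $a$, and of $\widehat a$ which inherits it) and the defining property $\widehat a(u,Y,B)=\E[a_u\mid\F^{Y,B}_u]$ together with $\F^{Y,B}_s\subset\F^{Y,B}_u$ for $u\ge s$, so that $\E[Z a_u]=\E[Z\,\E[a_u\mid\F^{Y,B}_u]]$. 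The second is trivial once one observes that $\int_s^t g(Y_u)^\top dB_u$ is already $\F^{Y,B}_t$-measurable — more precisely, it equals a measurable functional of the paths of $(Y,B)$ on $[0,t]$, since $g(Y_\cdot)$ is adapted to $\FF^{Y}$ and the stochastic integral against $B$ of an $\FF^{Y,B}$-adapted continuous integrand admits an $\F^{Y,B}_t$-measurable version (approximate by Riemann sums). Subtracting the $h$-equation, $\E[Z a_u]-\E[Z\widehat a(u,Y,B)]=0$ after integrating in $u$, and the stochastic-integral terms cancel, giving the displayed vanishing expectation; since $s,t,Z$ were arbitrary this shows the two processes
\begin{align*}
h(Y_t)-h(Y_0)-\int_0^t g(Y_u)^\top dB_u \qquad\text{and}\qquad \int_0^t \widehat a(u,Y,B)\,du
\end{align*}
have the same conditional expectation given $\F^{Y,B}_t$; but the first is $\F^{Y,B}_t$-measurable (again by the measurability of the stochastic integral) and the second is $\F^{Y,B}_t$-measurable by progressive measurability of $\widehat a$, so they are a.s.\ equal for each $t$, and equal for all $t$ simultaneously by path-continuity of both sides.

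The step I expect to be the main obstacle is the measurability claim that the stochastic integral $\int_0^t g(Y_u)^\top dB_u$ has an $\F^{Y,B}_t$-measurable version — i.e.\ that it is a genuine functional of the joint path $(Y,B)|_{[0,t]}$ rather than merely adapted to some larger filtration; this is what lets the two stochastic-integral terms cancel exactly rather than only in conditional expectation. This is proved by taking dyadic Riemann-sum approximations $\sum_j g(Y_{u_j})^\top(B_{u_{j+1}}-B_{u_j})$, which are manifestly continuous functionals of $(Y,B)|_{[0,t]}$, and passing to an $L^2(\PP)$-limit along a subsequence converging a.s.; continuity and boundedness of $g$ give the $L^2$ convergence, and the a.s.\ limit of $\F^{Y,B}_t$-measurable random variables is $\F^{Y,B}_t$-measurable (up to the usual completion, which is harmless here since we only ever assert a.s.\ identities). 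This is exactly the point where boundedness of $g$ is used, and everything else is bookkeeping with Fubini and the tower property; as noted, the argument is a straightforward adaptation of \cite[Lemma 5.2]{Lacker_closedloop}.
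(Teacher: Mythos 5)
Your computations up to the displayed identities are fine, but the final deduction is a non sequitur, and it occurs precisely at the step where the real content of the lemma lives. What you have actually verified is that, for every $0\le s\le t\le T$ and every bounded $\F^{Y,B}_s$-measurable $Z$, one has $\E[Z(N_t-N_s)]=0$, where $N_t:=h(Y_t)-h(Y_0)-\int_0^t\widehat a(u,Y,B)\,du-\int_0^tg(Y_u)^\top dB_u$. This says only that $N$ is an $\FF^{Y,B}$-martingale null at $0$ (once integrability is checked); it does \emph{not} show that the two processes ``have the same conditional expectation given $\F^{Y,B}_t$.'' Testing against $\F^{Y,B}_s$-measurable $Z$ with $s\le u$ is essential for your tower-property step $\E[Za_u]=\E[Z\,\E[a_u\mid\F^{Y,B}_u]]$ --- that identity fails for $Z\in\F^{Y,B}_t$ when $u<t$, since $\E[a_u\mid\F^{Y,B}_t]$ need not equal $\E[a_u\mid\F^{Y,B}_u]$ --- but as a consequence you only ever pair the increment $N_t-N_s$ with variables from $\F^{Y,B}_s$, never $N_t$ itself with variables from $\F^{Y,B}_t$. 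A nontrivial continuous $\FF^{Y,B}$-martingale compared with the zero process satisfies all of your displayed identities, so they cannot by themselves force $N\equiv0$; the measurability of $N_t$ with respect to $\F^{Y,B}_t$ does not rescue the argument.

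The missing ingredient is that $N$ has finite variation. By hypothesis, $h(Y_t)-h(Y_0)-\int_0^tg(Y_u)^\top dB_u=\int_0^ta_u\,du$ a.s.\ for a.e.\ $t$; both sides are a.s.\ continuous in $t$ (the right side because $a\in L^1(dt\otimes d\PP)$ implies $a(\cdot,\omega)\in L^1(dt)$ for a.e.\ $\omega$), so the identity holds for all $t$ simultaneously, and hence $N_t=\int_0^t\bigl(a_u-\widehat a(u,Y,B)\bigr)\,du$ is absolutely continuous. A continuous martingale of finite variation null at zero is identically zero, which gives the conclusion. This finite-variation-martingale argument is exactly the content of the proof of \cite[Lemma 5.2]{Lacker_closedloop}, to which the paper defers; the rest of your write-up --- the Riemann-sum argument for $\F^{Y,B}_t$-measurability of the stochastic integral, the Fubini and tower-property manipulations, and the a.e.-to-everywhere upgrade by continuity --- is correct and consistent with that route.
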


Next, we state here a form of the \emph{superposition principle} for Fokker-Planck SPDEs obtained recently in \cite{superposition_theorem}, which simplifies somewhat  due to the presence of strong solutions in our setting. The proof is deferred to Appendix \ref{ap:se:superposition}. We work with the following infinitesimal generator of the controlled process \eqref{def:stateprocesses-limitsection}: For each $\varphi \in C^\infty_c(\R^d)$ and $(t,x,m,a) \in [0,T] \times \R^d \times \P(\R^d) \times A$, we let
\begin{align}
L_{t, m}\varphi(x,a) := b(t,x,m,a) \cdot \nabla\varphi(x) + \tfrac12 \mathrm{tr}[(\sigma\sigma^\top+\gamma\gamma^\top)\nabla^2\varphi(x)]. \label{def:generator}
\end{align}

\begin{theorem} \cite[Theorem 1.3]{superposition_theorem} \label{th:superposition}
Suppose $\alpha : [0,T] \times \R^d \times C([0,T];\P^p(\R^d) \times \R^d) \to A$ is semi-Markov.
Let $(\Omega,\F,\PP)$ be a probability space supporting continuous processes $\nu$ and $\mu$ with values in $\P^p(\R^d)$ and $B$ with values in $\R^d$. Assume $B$ is a Brownian motion under $\FF^{\nu,\mu,B}$.
Assume that
\begin{align*}
\langle \nu_t, \varphi \rangle &= \langle \lambda, \varphi \rangle  + \int_0^t \langle \nu_s,\nabla\varphi\rangle^\top \gamma \, dB_s + \int_0^t\int_{\R^d}L_{s,\mu_s}\varphi(x,\alpha(s, x,\mu, B))\,\nu_s(dx)\, ds,
\end{align*}
a.s., for each $t \in [0,T]$ and $\varphi \in C^\infty_c(\R^d)$, and also that
\begin{align*}
\E\int_0^T \int_{\R^d}| b(t,x,\mu_t,\alpha(t,x,\mu,B))|^q\,\nu_t(dx)\,dt < \infty, \  \text{ for some } q > 1.
\end{align*}
Enlarge the probability space by adjoining an independent $\R^d$-valued random variable $\xi$ with law $\lambda$ and an independent Brownian motion $W$.
Then the following hold:
\begin{itemize}
\item $B$ is a $\FF^{\xi,\nu,\mu,B,W}$-Brownian motion.
\item There exists a  unique strong (i.e., adapted with respect to the completion of $\FF^{\xi,\nu,\mu,B,W}$) solution $X$ of the SDE
\begin{align}
dX_t = b(t,X_t,\mu_t,\alpha(t,X_t,\mu,B))dt + \sigma dW_t + \gamma dB_t, \quad X_0=\xi, \label{def:superpos-SDE}
\end{align}
and it
satisfies $\nu_t=\L(X_t\,|\,\F^{\nu,\mu,B}_t)=\L(X_t\,|\,\F^{\nu,\mu,B}_T)$ a.s., for each $t \in [0,T]$.
\end{itemize}
\end{theorem}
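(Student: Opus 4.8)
This is quoted verbatim as \cite[Theorem 1.3]{superposition_theorem}, so strictly speaking no proof is needed in this paper; the task is to indicate how the general superposition principle of \cite{superposition_theorem} specializes to the present statement. The plan is therefore: first, recognize that the hypothesis is precisely a weak (distributional) solution of a linear Fokker–Planck SPDE on $\R^d$ driven by the common noise $B$, with coefficients that are random only through the adapted processes $(\mu,B)$ and that are semi-Markov in the time variable; this is exactly the input required by the superposition theorem of \cite{superposition_theorem}. Second, apply that theorem to produce, on a suitably enlarged probability space, a process whose conditional laws given the noise filtration coincide with $\nu_t$; the enlargement by an independent initial condition $\xi\sim\lambda$ and an independent idiosyncratic Brownian motion $W$ is precisely what is needed to construct the driving sources for the SDE \eqref{def:superpos-SDE}. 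Third, upgrade from a weak to a strong (pathwise) solution using the non-degeneracy of $\sigma$: since $\sigma$ is invertible and $b$ has at most linear growth in the relevant variables, pathwise uniqueness holds for \eqref{def:superpos-SDE} with $(\mu,B)$ frozen as coefficients, and Yamada–Watanabe then gives existence of a strong solution adapted to the completion of $\FF^{\xi,\nu,\mu,B,W}$.

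In more detail, the first bullet point — that $B$ remains a Brownian motion for the enlarged filtration $\FF^{\xi,\nu,\mu,B,W}$ — follows immediately because $\xi$ and $W$ are adjoined independently of $(\nu,\mu,B)$, so conditioning on the larger filtration does not disturb the martingale property of $B$ relative to $\FF^{\nu,\mu,B}$. For the second bullet, I would first invoke the abstract superposition principle to obtain \emph{some} process $X$, defined on a possibly further extended space, solving \eqref{def:superpos-SDE} in the weak sense and satisfying $\nu_t=\L(X_t\mid\F^{\nu,\mu,B}_t)$; the integrability hypothesis $\E\int_0^T\int_{\R^d}|b(t,x,\mu_t,\alpha(t,x,\mu,B))|^q\,\nu_t(dx)\,dt<\infty$ for some $q>1$ is exactly the moment condition under which \cite[Theorem 1.3]{superposition_theorem} applies. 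Then pathwise uniqueness (from invertible $\sigma$ plus the linear-growth/locally-Lipschitz-in-law structure afforded by Assumption \ref{assumption:A}, together with the semi-Markov dependence which makes the drift progressively measurable in $(X,\mu,B)$) promotes this to a strong solution on the original enlarged space, and strong solutions are unique. The identity $\L(X_t\mid\F^{\nu,\mu,B}_t)=\L(X_t\mid\F^{\nu,\mu,B}_T)$ follows because, $X$ being a strong solution, it is adapted to the completion of $\FF^{\xi,\nu,\mu,B,W}$, and $(\xi,W)$ is independent of $(\nu,\mu,B)$, so $\F^X_t$ is conditionally independent of $\F^{\nu,\mu,B}_T$ given $\F^{\nu,\mu,B}_t$.

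The main obstacle — and the reason the result is cited rather than reproved — is the superposition step itself: passing from a measure-valued weak solution of the Fokker–Planck SPDE to a genuine stochastic process realizing those marginals, in the presence of common noise and with coefficients adapted to the solution's own filtration. This is the substantive content of \cite{superposition_theorem} and rests on a careful disintegration/compactness argument over path space together with a conditional martingale-problem characterization; it is not something one should re-derive here. Everything else in the statement — the Brownian-motion property under the enlarged filtration, and the strong-solution upgrade via Yamada–Watanabe — is routine given non-degeneracy of $\sigma$ and the standing growth assumptions, and the conditional-independence identity for the marginals is a short consequence of strong solvability and the independence of the adjoined randomness.
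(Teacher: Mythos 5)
Your proposal follows essentially the same route as the paper: invoke \cite[Theorem 1.3]{superposition_theorem} to realize $\nu$ as the conditional marginals of a weak solution of \eqref{def:superpos-SDE} on an extension of the space, then use the compatibility (conditional-independence) structure supplied by that theorem together with pathwise uniqueness to reduce to a strong solution adapted to the completion of $\FF^{\xi,\nu,\mu,B,W}$, the identity $\L(X_t\,|\,\F^{\nu,\mu,B}_t)=\L(X_t\,|\,\F^{\nu,\mu,B}_T)$ then following from the independence of $(\xi,W)$ and $(\nu,\mu,B)$. One caveat: pathwise uniqueness here does not come from any ``locally-Lipschitz'' structure --- Assumption \ref{assumption:A} gives none, and the frozen drift $x\mapsto b(t,x,\mu_t,\alpha(t,x,\mu,B))$ is merely measurable in $x$ because $\alpha$ is only measurable --- so the correct ingredient is Veretennikov's theorem for non-degenerate SDEs with measurable \emph{Markovian} drift (this is the content of Lemma \ref{ap:le:SDE-uniq-strong}, and it is precisely where the semi-Markov, i.e.\ current-state, dependence of $\alpha$ is essential; for genuinely path-dependent drifts one would only get weak solutions).
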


The next theorem identifies the limiting dynamics of $(\mu^n,B)$, showing that every limit point satisfies properties (1--5) of Definition \ref{def:weakMFE}. 

\begin{theorem} \label{th:limit-dynamics}
Suppose a subsequence $(\mu^{n_k}, B)_k$ converges  in law to some random element $(\mu, B)$ of $C([0,T];\P^p(\R^d) \times \R^d)$, defined on some probability space $(\Omega,\F,\FF,\PP)$. Then there exists a semi-Markov function $\alpha^* : [0,T] \times \R^d \times C([0, T]; \P(\R^d) \times \R^d) \rightarrow A$ such that, by enlarging the probability space, we may construct continuous $\FF$-adapted $d$-dimensional processes $X^*$ and $W$ such that the tuple $(\Omega,\F,\FF,\PP ,W , B, \alpha^*,\mu, X^*)$ satisfies the  properties (1--5) of Definition \ref{def:weakMFE}. 
Moreover, it holds that
\begin{align}
    \lim_{k\to\infty} \frac{1}{n_k} & \sum_{i = 1}^{n_k} J_i^{n_k}(\alpha^{n_k, 1}, \cdots, \alpha^{n_k, n_k}) \leq  \E\left[\int_0^T f(t, X^*_t, \mu_t, \alpha^*(t, X^*_t, \mu, B))dt + g(X^*_T, \mu_T) \right].
    \label{pf:cost_function_limit}
\end{align}
\end{theorem}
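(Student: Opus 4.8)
The plan is to follow the usual three-part scheme "tightness $\to$ limiting dynamics $\to$ identification of a semi-Markov feedback and the cost bound", adapting \cite[Section 5.3]{Lacker_closedloop} to the common-noise setting with the superposition principle (Theorem \ref{th:superposition}) doing the heavy lifting in the last step. First I would pass to the convergent subsequence and, by the Skorokhod representation theorem, realize the convergence $(\bm{\overline\mu}^{n_k},B)\to(\bm{\overline\mu},B)$ almost surely in $\P^{qp'}(\C^d\times\V)\times\C^d$ on a common probability space, for a fixed $q\in(p/p',1)$ so that $qp'>p$; it is harmless to extract further so that the deterministic sequence $\tfrac1{n_k}\sum_i J^{n_k}_i$ also converges. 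I would record that $B$ remains a Brownian motion for $\FF^{\mu,B}$ — the conditional independence of increments holds at every finite $n$ since $\mu^n$ is $\FF^n$-adapted and $B$ is an $\FF^n$-Brownian motion, and it is preserved under weak convergence — and that $\bm\mu$ inherits the bound \eqref{def:bound_on_mu}. Applying It\^o's formula to $\langle\mu^n_t,\varphi\rangle$ for $\varphi\in C^\infty_c(\R^d)$ splits the dynamics into a drift term expressible through the extended empirical measure, an idiosyncratic martingale with quadratic variation $O(1/n)$ that vanishes, and the common-noise term $\int_0^t\langle\mu^n_s,\nabla\varphi\rangle^\top\gamma\,dB_s$. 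Passing to the limit, using Lemma \ref{le:continuity} together with uniform $p$-integrability supplied by the uniform $p'$-moment bound of Lemma \ref{le:inequalities}, and a standard stability result for It\^o integrals against Brownian motion, shows that $\mu$ satisfies
\[
\langle\mu_t,\varphi\rangle=\langle\lambda,\varphi\rangle+\int_0^t\!\!\int_{\C^d\times\V}\!\!\int_A L_{s,\mu_s}\varphi(w_s,a)\,q_s(da)\,\bm{\overline\mu}(dw,dq)\,ds+\int_0^t\langle\mu_s,\nabla\varphi\rangle^\top\gamma\,dB_s, \quad \forall t,\ \text{a.s.}
\]

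The key step is then to ``project away randomness''. The second-order part of $L_{s,\mu_s}$ above already depends only on $\mu_s$; for the first-order part I would form the conditional expectation of the drift given $\F^{\mu,B}_s$, and note that since the only ingredient not $\F^{\mu,B}_s$-measurable is the relaxed control, this amounts to replacing the empirical average of the controls-at-state-$x$ by a conditional version $\hat\kappa_s(x,da)$ — a relaxed feedback, which one checks can be taken semi-Markov in $(s,x,\mu,B)$ and whose disintegrating measure has $x$-marginal $\mu_s$. Lemma \ref{le:conditional-exp-derivatives}, applied with $Y=\mu$ (the drift being $dt\otimes d\PP$-integrable by \eqref{def:bound_on_mu} and the diffusion coefficient $\gamma^\top\langle\mu_s,\nabla\varphi\rangle$ being bounded), then upgrades this to the pathwise SPDE with drift $\int_{\R^d}[\int_A b(s,x,\mu_s,a)\hat\kappa_s(x,da)\cdot\nabla\varphi(x)+\tfrac12\mathrm{tr}((\sigma\sigma^\top+\gamma\gamma^\top)\nabla^2\varphi(x))]\,\mu_s(dx)$. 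Finally I would invoke the convexity assumption (\ref{assumption:A}.5): for $\mu_s$-a.e.\ $x$ the point $(\int_A b(s,x,\mu_s,a)\hat\kappa_s(x,da),\int_A f(s,x,\mu_s,a)\hat\kappa_s(x,da))$ lies in the convex set $K(s,x,\mu_s)$, so a Filippov-type measurable selection produces a semi-Markov \emph{pure} control $\alpha^*:[0,T]\times\R^d\times C([0,T];\P^p(\R^d)\times\R^d)\to A$ with $b(s,x,\mu_s,\alpha^*(s,x,\mu,B))=\int_A b(s,x,\mu_s,a)\hat\kappa_s(x,da)$ (so the SPDE is unchanged) and, crucially for the cost bound, $f(s,x,\mu_s,\alpha^*(s,x,\mu,B))\ge\int_A f(s,x,\mu_s,a)\hat\kappa_s(x,da)$. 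With the SPDE in hand, the $L^q$-bound on $b$ along $\mu$ for some $q\in(1,2]$ (from linear growth plus the $p'$-moment with $p'>p\vee2$), and $B$ a Brownian motion for $\FF^{\mu,B}$, Theorem \ref{th:superposition} applies after adjoining $X^*_0\sim\lambda$ and an independent $W$, and yields the strong solution $X^*$ of \eqref{def:weakMFE-SDE} on the enlarged space with $X^*_0,W,(\mu,B)$ independent and $\mu_t=\L(X^*_t\,|\,\F^{\mu,B}_t)=\L(X^*_t\,|\,\F^{\mu,B}_T)$; this delivers properties (1)--(5) of Definition \ref{def:weakMFE}.

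For the cost bound \eqref{pf:cost_function_limit}, I would write $\tfrac1{n_k}\sum_i J^{n_k}_i$ as the expectation of an integral against $\bm{\overline\mu}^{n_k}$ and pass to the limit exactly as above (Lemma \ref{le:continuity} for the running cost, a companion continuity statement for the terminal cost, with uniform integrability again from the $p'$-moment bound), obtaining $\E$ of the corresponding integral against $\bm{\overline\mu}$. Rewriting in terms of the random measure $\Upsilon_t(dx,da)=\int_{\C^d\times\V}\delta_{w_t}(dx)q_t(da)\bm{\overline\mu}(dw,dq)$, conditioning on $\F^{\mu,B}_t$ (respectively $\F^{\mu,B}_T$ for the terminal term) to pass from $\Upsilon$ to its $\F^{\mu,B}$-conditional version, applying the selection inequality $\int_A f(t,\cdot,\mu_t,a)\hat\kappa_t(\cdot,da)\le f(t,\cdot,\mu_t,\alpha^*(t,\cdot,\mu,B))$ together with the consistency $\mu_t=\L(X^*_t\,|\,\F^{\mu,B}_t)$, the tower property collapses everything to $\E[\int_0^T f(t,X^*_t,\mu_t,\alpha^*(t,X^*_t,\mu,B))\,dt+g(X^*_T,\mu_T)]$, which is the claimed inequality.

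The main obstacle is the projection step: producing the conditional relaxed control $\hat\kappa$ as a genuinely \emph{semi-Markov} (progressively measurable in $(\mu,B)$) kernel, so that the conditional drift in Lemma \ref{le:conditional-exp-derivatives} really has the feedback form, and then pushing the measurable selection through assumption (\ref{assumption:A}.5) while preserving that structure — the direction of the inequality it supplies is exactly what orients the cost bound correctly. Everything else (the tightness of $(\bm{\overline\mu}^n,B)$, It\^o's formula, the vanishing of the idiosyncratic martingale, convergence of the common-noise integrals, and the invocation of the superposition principle) is either already in hand or routine.
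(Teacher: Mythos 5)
Your proposal is correct and follows essentially the same route as the paper's proof: It\^o's formula for the empirical measure, vanishing of the averaged idiosyncratic martingale, Kurtz--Protter-type convergence of the common-noise integral, projection of the relaxed control onto $\F^{\mu,B}_t$ to obtain a semi-Markov kernel with $x$-marginal $\mu_t$, Lemma \ref{le:conditional-exp-derivatives}, measurable selection via Assumption (\ref{assumption:A}.5) with the inequality oriented exactly as you state, the superposition principle for the construction of $X^*$, and the same tower-property collapse for the cost bound. The only cosmetic difference is your explicit verification that $B$ remains an $\FF^{\mu,B}$-Brownian motion (which the paper leaves implicit as a hypothesis of Theorem \ref{th:superposition}), and this is a harmless, correct addition.
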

\begin{proof}
In light of the tightness established in Lemma \ref{le:tightness-main}, we may pass to a further subsequence and assume that $(\bm{\overline\mu}^n,B)$ converges in law to some random element $(\bm{\overline\mu},B)$ of $\P^p(\C^d \times \V) \times \C^d$, with $\mu=(\mu_t)_{t \in [0,T]}$ as the corresponding marginal flow.

Let $\varphi \in C_c^\infty(\R^d)$, and recall the notation for the infinitesimal generator introduced in \eqref{def:generator}. Apply It\^o's formula to $\varphi(X_t^{n,k})$ and average over $k = 1,\ldots,n$ to get
\begin{align*}
\langle \mu_t^n, \varphi \rangle  & =  \frac{1}{n} \sum_{k=1}^n \varphi(X_t^{n,k})  \\
	& = \langle \mu_0^n, \varphi \rangle  + \frac{1}{n}\sum_{k=1}^n \int_0^t L_{s, \mu_s^n}\varphi(X_s^{n,k}, \alpha^{n,k}(s,\bm{X}^n,S))\,ds \\
		&\qquad + \frac{1}{n}  \sum_{k=1}^n \int_0^t \nabla \varphi(X_s^{n,k}) \cdot \Big( \sigma dW_s^{k} + \gamma dB_s \Big) \\
    &= \langle \mu_0^n, \varphi \rangle  + \int_{\C^d \times \V} \left[\int_0^t\int_A L_{s, \mu_s^n}\varphi(x_s, a)q_s(da)ds \right] \bm{\overline{\mu}}^n(dx, dq) + M_t^{n,\varphi} + \widehat{M}_t^{n,\varphi} ,
\end{align*}
for $t\in [0, T]$, where we define the martingales $M^{n,\varphi}$ and $\widehat{M}^{n,\varphi}$ by
\begin{align*}
M_t^{n,\varphi}  &: = \frac{1}{n}  \sum_{k=1}^n \int_0^t \nabla \varphi(X_s^{n,k})^\top \sigma dW_s^{k},  \\
\widehat{M}_t^{n,\varphi} &: = \int_0^t \langle \mu_s^n, \nabla \varphi \rangle^\top \gamma dB_s.
\end{align*}
Since $\nabla \varphi$ is bounded, $M^{n,\varphi}$ is an average of orthogonal martingales, and we have
\begin{align}
  \E|M_t^{n,\varphi}|^2 = \frac{1}{n^2} \sum_{k=1}^n \int_0^t \E|\nabla \varphi(X_s^{n,k})^\top\sigma|^2 ds \leq \frac{T \, \|\nabla \varphi\|_\infty^2 |\sigma|^2}{n}.
  \label{pf:convergence_probability_orthogonal_martingales}
\end{align}
Define a functional $F^\varphi_t : \P^p(\C^d \times \V) \to \R$ by 
\begin{align*}
F^\varphi_t(\bm{\overline{m}}) & : = \int_{\C^d\times\V}\left[\varphi(x_t)-\varphi(x_0) - \int_0^t\int_AL_{s, m_s}\varphi(x_s,a)q_s(da)ds \right] \bm{\overline{m}}(dx,dq),
\end{align*}
noting that the integral is well defined for $\bm{\overline m} \in \P^p(\C^d \times \V)$ thanks to the growth assumption (\ref{assumption:A}.3).
We obtain  the following equation for each $t\in [0, T]$ and $\varphi \in C_c^\infty(\R^d)$:
\begin{align*}
F_t^{\varphi}(\overline{\boldsymbol{\mu}}^n) - \widehat{M}_t^{n,\varphi} = M_t^{n,\varphi}.
\end{align*}
Since $(\bm{\overline{\mu}}^n, B)$ converges in distribution to $(\bm{\overline{\mu}}, B)$ in $\P^p(\C^d \times \V) \times \C^d$, we may pass to the limit in this equation.
Indeed, this comes from three observations:
\begin{itemize}
\item $\E|M_t^{n,\varphi}|^2 \to 0$  by  \eqref{pf:convergence_probability_orthogonal_martingales}.
\item $F_t^\varphi$ is continuous on $\P^p(\C^d \times \V)$  by Lemma \ref{le:continuity}.
\item $(\bm{\overline{\mu}}^n, B,\widehat{M}_{\cdot}^{n,\varphi})$ converges in distribution to $(\bm{\overline{\mu}}, B,\widehat{M}_{\cdot}^{\varphi})$, where $\widehat{M}_{\cdot}^{\varphi} := \int_0^{\cdot} \langle \mu_s, \nabla \varphi \rangle^\top \gamma dB_s$. To see this, use Skorohod's representation to assume $(\bm{\overline{\mu}}^n, B)$ converges a.s.\ to $(\bm{\overline{\mu}}, B)$, followed by the convergence in probability of stochastic integrals \cite[Theorem 2.2]{Kurtz-Protter}. 
\end{itemize}
We conclude that for every $\varphi \in C^\infty_c(\R^d)$ and $t \in [0,T]$ it holds a.s.\ that $F_t^{\varphi}(\overline{\bm{\mu}})=\widehat{M}_{t}^{\varphi}$. Noting that Assumption \ref{assumption:B} implies $\mu_0=\lambda$ a.s., we may write this as
\begin{align}
\begin{split}
\langle \mu_t, \varphi \rangle &= \langle \lambda, \varphi \rangle  + \int_0^t \langle \mu_s,\nabla\varphi\rangle^\top \gamma \, dB_s  \\
	&\quad + \int_{\C^d \times \V} \left[\int_0^t\int_A L_{s, \mu_s}\varphi(x_s, a)q_s(da)ds \right] \bm{\overline{\mu}}(dx, dq).
	\end{split} \label{def:equation_for_mu_barre}
\end{align}
What remains is to ``project away" the extra randomness in the last line of \eqref{def:equation_for_mu_barre}, and then apply the superposition principle Theorem \ref{th:superposition} to construct the desired process $X^*$.

For each $t \in [0,T]$ the expression $\int_{\C^d \times \V}\delta_{x_t} \times q_t \, \bm{\overline{\mu}}(dx,dq)$ defines a random probability measure on $\R^d \times A$, and we denote by $\widehat{\Lambda}^*(t,\mu,B)$ its conditional expectation given $\F^{\mu,B}_t$. (Recall that we are working with a predictable version of the map $[0,T] \times \V \ni (t,q) \mapsto q_t \in \P(A)$, as explained in Section \ref{se:relaxedcontrols}.) That is, $\widehat{\Lambda}^* : [0,T] \times C([0,T];\P^p(\R^d) \times \R^d) \to \P^p(\R^d \times A)$ is a function, which can be taken to be progressively measurable \cite[Lemma C.3]{Lacker_closedloop}, satisfying
\begin{align}
\int_{\R^d \times A} \psi(t, x, \mu_t, a)\widehat{\Lambda}^*(t, \mu, B)(dx,da) = \E\left[ \int_{\C^d \times \V} \int_A \psi(t, x_t, \mu_t, a)q_t(da) \bm{\overline{\mu}}(dx,dq)\Big| \F_t^{\mu, B}\right] \label{pf:limitdyn-Lamba1}
\end{align}
a.s., for each suitably integrable $\psi : [0,T] \times \R^d \times \P^p(\R^d) \times A\to \R$. Note that the $\R^d$-marginal of $\widehat{\Lambda}^*(t,\mu,B)$ equals $\mu_t$, so we can disintegrate it by
\begin{align}
\widehat{\Lambda}^*(t,\mu,B)(dx,da) = \mu_t(dx)\Lambda^*(t,x,\mu,B)(da), \label{pf:limitdyn-Lamba2}
\end{align}
for some semi-Markov function $\Lambda^* : [0,T] \times \R^d \times C([0,T];\P^p(\R^d) \times \R^d) \to \P(A)$.
In particular, we find
\begin{align*}
\int_{\R^d}\int_A L_{t,\mu_t}\varphi(x,a)\Lambda^*(t, x,\mu, B)(da)\mu_t(dx) = \E\left[ \int_{\C^d \times \V} \int_A L_{t,\mu_t}\varphi(x_t,a)q_t(da) \bm{\overline{\mu}}(dx,dq)\Big| \F_t^{\mu, B}\right].
\end{align*}
Combining this with \eqref{def:equation_for_mu_barre}, we may now apply Lemma \ref{le:conditional-exp-derivatives} (with the identifications $E=\P^p(\R^d)$, $Y=\mu$, $h(m)=\langle m,\varphi\rangle$, and $g(m) = \gamma^\top \langle m,\nabla\varphi\rangle$) to rewrite \eqref{def:equation_for_mu_barre} as
\begin{align}
\begin{split}
\langle \mu_t, \varphi \rangle &= \langle \lambda, \varphi \rangle  + \int_0^t \langle \mu_s,\nabla\varphi\rangle^\top \gamma \, dB_s  \\
	&\quad + \int_0^t\int_{\R^d}\int_A L_{s,\mu_s}\varphi(x,a)\Lambda^*(s, x,\mu, B)(da)\mu_s(dx)ds.
	\end{split} \label{def:equation_for_mu_barre2}
\end{align}

Next, we pass from the relaxed control to a strict control. Define semi-Markov functions $(c_1,c_2) : [0,T] \times \R^d \times C([0,T]; \P^p(\R^d) \times \R^d) \to \R^d \times \R$ by
\begin{align*}
\big(c_1(t,x,\mu,B),c_2(t,x,\mu,B)\big) &:= \int_A \big(b(t,x,\mu_t,a),f(t,x,\mu_t,a)\big) \Lambda^*(t, x,\mu, B)(da),
\end{align*}
a.s., for $(t,x) \in [0,T] \times \R^d$, and note that this belongs to the set $K(t,x,\mu_t)$ from Assumption (\ref{assumption:A}.5).
Hence, using a measurable selection argument (see \cite[Lemma 3.1]{dufourstockbridge-existence}), we may find a semi-Markov function $\alpha^* : [0,T] \times \R^d \times C([0,T]; \P(\R^d) \times \R^d) \to A$ such that
\begin{align}
c_1(t,x,\mu,B) &= b(t,x,\mu_t,\alpha^*(t,x,\mu,B)), \label{pf:limitdyn-projb1} \\
c_2(t,x,\mu,B) &\le f(t,x,\mu_t,\alpha^*(t,x,\mu,B)).  \label{pf:limitdyn-projf1}
\end{align}
Applying \eqref{pf:limitdyn-projb1} in \eqref{def:equation_for_mu_barre2}, we rewrite \eqref{def:equation_for_mu_barre2} as
\begin{align}
\begin{split}
\langle \mu_t, \varphi \rangle &= \langle \mu_0, \varphi \rangle  + \int_0^t \langle \mu_s,\nabla\varphi\rangle^\top \gamma \, dB_s + \int_0^t\int_{\R^d}L_{s,\mu_s}\varphi(x,\alpha^*(s, x,\mu, B))\,\mu_s(dx)\, ds.
	\end{split} \label{def:equation_for_mu_barre3}
\end{align}
Note that the linear growth assumption (\ref{assumption:A}.3) and the moment bound of Lemma \ref{le:tightness-main} imply
\begin{align*}
\E\int_0^T \int_{\R^d} |b(t, x, \mu_t, \alpha^*(t, x, \mu, B))|^{p'}\, \mu_t(dx) \, dt \le C\E\left(1 +  \int_{\C^d} \|x\|_T^{p'} \boldsymbol{\mu}(dx)\right) <\infty. 
\end{align*}
Note in the case $p=p'=0$ that the drift $b$ is bounded.
We are thus in a position to apply the superposition principle, Theorem \ref{th:superposition}, with $\nu\equiv\mu$: Enlarging if necessary the probability space $(\Omega,\F,\FF,\PP)$, we deduce the existence of $X^*$ and $W$ such that the tuple $(\Omega,\F,\FF,\PP ,W , B, \alpha^*,\mu, X^*)$ satisfies the  properties (1--5) of Definition \ref{def:weakMFE}. 

It remains to prove the final claim \eqref{pf:cost_function_limit}. Notice first that the left-hand side can be rewritten in terms of the empirical measure:
\begin{align*}
J_n &:= \frac{1}{n}\sum_{i=1}^nJ^n_i(\alpha^{n,1},\ldots,\alpha^{n,n}) \\
	&= \frac{1}{n}\sum_{i=1}^n\E\left[\int_0^T f(t,X^{n,i}_t,\mu^n_t,\alpha^{n,i}(t,\bm{X}^n, S))dt + g(X^{n,i}_T,\mu^n_T)\right] \\
	&= \E\left[\int_{\C^d \times \V}\left(\int_0^T\int_A f(t,x_t,\mu^n_t,a)q_t(da)dt + g(x_T,\mu^n_T)\right)\bm{\overline\mu}^n(dx,dq)\right].
\end{align*}
Recall that we passed to a subsequence $(n_k)$ along which $(\bm{\overline\mu}^n,B)$ converges in law to $(\bm{\overline\mu},B)$ in $\P^p(\C^d \times \V)$. 
In the case $p'=p=0$, we may apply Lemma \ref{le:continuity} thanks to the continuity and boundedness of $f$ and $g$ to deduce that $J_{n_k}$ converges to
\begin{align}
\E\left[\int_{\C^d \times \V}\left(\int_0^T\int_A f(t,x_t,\mu_t,a)q_t(da)dt + g(x_T,\mu_T)\right)\bm{\overline\mu}(dx,dq)\right]. \label{pf:LD-Jnk-lim}
\end{align}
For the case $0 < p \le p \vee 2 < p'$, the integrand still converges in law to the desired limit by Lemma \ref{le:continuity}, so we must only check uniform integrability to deduce again the $J_{n_k}$ converges to \eqref{pf:LD-Jnk-lim}. To do so, simply note that the growth assumption (\ref{assumption:A}.4) along with the $p'$-moment bound of Lemma \ref{le:inequalities} and Jensen's inequality yield
\begin{align*}
\sup_n \E\left[\left(\int_{\C^d \times \V}\left(\int_0^T\int_A f(t,x_t,\mu^n_t,a)q_t(da)dt + g(x_T,\mu^n_T)\right)\bm{\overline\mu}^n(dx,dq)\right)^{p'/p}\right] < \infty.
\end{align*}

To complete the proof, we must upper bound the limiting quantity \eqref{pf:LD-Jnk-lim} by the right-hand side of \eqref{pf:cost_function_limit}.
Note next that the property $\mu_T=\L(X^*_T\,|\,\F^{\mu,B}_T)$ implies
\begin{align*}
\E\left[ \int_{\C^d \times \V} g(x_T,\mu_T) \, \bm{\overline\mu}(dx,dq)\right] = \E\left[\int_{\R^d} g(x,\mu_T)\,\mu_T(dx) \right] = \E[g(X^*_T,\mu_T)].
\end{align*}
Finally, combining \eqref{pf:limitdyn-Lamba1}, \eqref{pf:limitdyn-Lamba2}, and \eqref{pf:limitdyn-projf1}, we find
\begin{align*}
\E &\left[\int_{\C^d \times \V}\left(\int_0^T\int_A f(t,x_t,\mu_t,a)q_t(da)dt\right)\bm{\overline\mu}(dx,dq)\right] \\
	& \quad = \E\left[\int_0^T \int_{\R^d} \int_A f(t,x,\mu_t,a) \, \Lambda^*(t,x,\mu,B)(da) \, \mu_t(dx) \, dt  \right] \\
	& \quad = \E\left[\int_0^T \int_{\R^d} c_2(t,x,\mu,B) \, \mu_t(dx) \, dt  \right] \\
	& \quad \le \E\left[\int_0^T \int_{\R^d} f(t,x,\mu_t,\alpha^*(t,x,\mu,B)) \, \mu_t(dx) \, dt  \right] \\
	& \quad = \E\left[\int_0^T  f(t,X^*_t,\mu_t,\alpha^*(t,X^*_t,\mu,B)) \, dt  \right],
\end{align*}
with the last identity using Fubini's theorem and the property $\mu_t=\L(X^*_t\,|\,\F^{\mu,B}_t)$.
\end{proof}

\subsection{Optimality} \label{se:1playerdeviations}

With Theorem \ref{th:limit-dynamics} now established, we have no further need for the extended empirical measure $\overline{\bm{\mu}}^n$, and we will work henceforth with its time-$t$ marginal flow. Throughout this section, we fix a weak limit $(\mu,B)$ of $(\mu^n,B)$, whose existence is guaranteed by Lemma \ref{le:tightness-main}.
We may associate this limit with a semi-Markov function $\alpha^*$ and a filtered probability space $(\Omega,\F,\FF,\PP)$ satisfying the conclusions of Theorem \ref{th:limit-dynamics}.
We abuse notation by relabeling the subsequence, so that all limits $n\to\infty$ in this section are understood to refer to this fixed convergent subsequence.
The final step of the proof of Theorem \ref{th:mainlimit}, carried out in this section, is to show that $(\mu, B)$ satisfies the optimality property (6) of Definition \ref{def:weakMFE}.

Let us write $\A_{\mathrm{semi}}$ for the set of semi-Markov functions from $[0,T] \times \R^d \times C([0, T]; \P^p(\R^d) \times \R^d)$ to $A$. 
For any $\alpha \in \A_{\mathrm{semi}}$, define $X=X[\alpha]$ to be the solution of the SDE 
\begin{align}
d X_t = b(t, X_t,\mu_t,\alpha(t, X_t,\mu, B))dt + \sigma dW_t + \gamma dB_t, \quad X_0 = X_0^*. 
\label{def:SDE-semimarkov-strong-optsec}
\end{align}
Define
\begin{align*}
J(\alpha) &:= \E\left[\int_0^T f(t, X_t[\alpha], \mu_t, \alpha(t, X_t[\alpha], \mu, B))dt + g(X_T[\alpha], \mu_T)\right].
\end{align*}
To complete the proof of Theorem \ref{th:mainlimit}, we need to show that 
\begin{align}
\sup_{\alpha \in \A_{\mathrm{semi}}}J(\alpha) = J(\alpha^*). \label{pf:optimality-goal1}
\end{align}
It will help, however, to restrict the supremum to a dense subset of better-behaved controls: Define $\A_{\mathrm{semi}}^{\mathrm{cont}}$ to be  the set of $\alpha \in \A_{\mathrm{semi}}$ which are continuous functions of all their variables.
Following \cite{Lacker_closedloop}, we prove \eqref{pf:optimality-goal1} in two steps. The following lemma is reminiscent of the $\Gamma$-limsup in $\Gamma$-convergence arguments.

\begin{lemma} \label{le:approx-optimizer-nice}
For any $\beta \in \A_{\mathrm{semi}}$, there exists a sequence $\beta^n \in \A_{\mathrm{semi}}^{\mathrm{cont}}$ such that $(\mu,B, X[\beta^n])$ converges in law to $(\mu,B, X[\beta])$ and $J(\beta^n) \rightarrow J(\beta)$.
\end{lemma}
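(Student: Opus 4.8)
\textbf{Proof plan for Lemma \ref{le:approx-optimizer-nice}.}

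The plan is a two-stage approximation of an arbitrary semi-Markov $\beta$ by continuous semi-Markov controls, controlling both the law of the state process and the objective value at each stage. The fundamental tool is the stability of the SDE \eqref{def:SDE-semimarkov-strong-optsec} with respect to its drift coefficient: since $\sigma$ is non-degenerate, if $\beta^n \to \beta$ in a suitable sense, then Girsanov's theorem (or the weak-convergence machinery of \cite[Section 5.3]{Lacker_closedloop}, or direct estimates) gives $(\mu,B,X[\beta^n]) \to (\mu,B,X[\beta])$ in law, which combined with the continuity results of Lemma \ref{le:continuity} and the uniform integrability coming from Lemma \ref{le:moment_property_definition_generalilzed} yields $J(\beta^n) \to J(\beta)$.

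First I would reduce to the case of a \emph{bounded measurable} $\beta$: since $A$ is compact (Assumption (\ref{assumption:A}.1)) every semi-Markov control already takes values in the compact set $A$, so there is no boundedness issue for $\beta$ itself; the genuine content is regularity in the variables $(t,z,x) \in [0,T] \times \R^d \times C([0,T];\P^p(\R^d) \times \R^d)$. The second and main step is a mollification/Lusin-type argument: a bounded Borel function on a Polish space can be approximated in $L^1(\nu)$, for any fixed reference measure $\nu$, by continuous functions; here the natural reference measure is the law of $(t,X_t[\beta],\mu,B)$ under $dt \otimes d\PP$ (or, to handle the circularity that $X[\beta^n]$ changes with $\beta^n$, one uses an occupation-measure argument as in \cite{Lacker_closedloop}). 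One must be careful to preserve the \emph{semi-Markov} structure — i.e., dependence on the path $x \in C([0,T];\P^p(\R^d)\times\R^d)$ only through its history up to time $t$ — which can be arranged by first composing with the stopping map $x \mapsto x_{\cdot \wedge t}$ before mollifying in the path variable, or by mollifying only in $(t,z)$ and a finite-dimensional projection of the path and passing to the limit. Since $A$ need not be convex, after mollifying one may land outside $A$; this is repaired by composing with a fixed continuous retraction, or more cleanly by embedding $A$ into the relaxed controls $\P(A)$, mollifying there (where convexity holds), and invoking the measurable selection argument of \cite[Lemma 3.1]{dufourstockbridge-existence} exactly as in the proof of Theorem \ref{th:limit-dynamics} — though here one wants the approximants to stay genuinely $A$-valued and continuous, so the retraction route is cleanest.

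The third step is to pass to the limit. Writing $X^n := X[\beta^n]$, Girsanov gives $\L(X^n,\mu,B) = \zeta^n_T \cdot \L(X[\beta],\mu,B)$ where $\zeta^n_T$ is the Doléans exponential built from $\sigma^{-1}(b(t,X_t,\mu_t,\beta^n(\cdots)) - b(t,X_t,\mu_t,\beta(\cdots)))$; as $\beta^n \to \beta$ along the relevant occupation measure and $b$ is continuous and bounded on the compact $A$ (for fixed $(t,x,m)$, with linear growth uniform in $a$), $\zeta^n_T \to 1$ in $L^1$, giving the claimed convergence in law of $(\mu,B,X^n)$. Then $J(\beta^n) \to J(\beta)$ follows from the continuity of the reward functionals (Lemma \ref{le:continuity}, applied with $\psi = f$ and $\psi = g$) together with uniform integrability: in the bounded case $p=p'=0$ this is immediate, and in the unbounded case $0 < p \le p\vee 2 < p'$ one gets a uniform bound on the $p'/p$ moment of the reward from Assumption (\ref{assumption:A}.4), the moment bound \eqref{moment:Xmfe} of Lemma \ref{le:moment_property_definition_generalilzed}, and the fact that the Girsanov densities $\zeta^n_T$ are bounded in any $L^q$, $q \ge 1$, uniformly in $n$ (here this \emph{is} available, unlike in the genuinely $n$-dependent situation of Remark \ref{re:possible_extensions}, because $\beta^n - \beta$ ranges over the \emph{compact} set of differences of $A$-valued functions and $b$ has at most linear growth in $(x,m)$ uniformly in $a$). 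I expect the main obstacle to be the bookkeeping in the mollification step — simultaneously achieving continuity in all variables, preserving progressive (history-only) measurability in the path variable, keeping values in the non-convex set $A$, and arranging convergence with respect to the right (state-dependent, hence $\beta$-dependent) reference measure — rather than any analytic subtlety; all the hard analysis is already packaged in Lemma \ref{le:continuity}, Lemma \ref{le:moment_property_definition_generalilzed}, and the non-degeneracy of $\sigma$.
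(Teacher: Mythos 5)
Your proposal follows essentially the same route as the paper: approximate $\beta$ a.e.\ (with respect to Lebesgue measure in $(t,x)$ and the law $M=\L(\mu,B)$ in the path variables) by continuous semi-Markov controls as in \cite[Step 2, proof of Lemma 5.5]{Lacker_closedloop}, then invoke a Girsanov-based stability result for the SDE with random coefficients (the paper's Lemma \ref{ap:le:stability_lemma}), and finally pass to the limit in $J$ using Lemma \ref{le:continuity} together with uniform integrability from the control-uniform moment bound \eqref{moment:Xmfe}.

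One caveat: your assertion that the densities $\zeta^n_T$ are bounded in every $L^q$ uniformly in $n$ is not justified in the unbounded case $0<p\le p\vee 2<p'$, since the Girsanov integrand $\sigma^{-1}\bigl(b(t,X_t,\mu_t,\beta^n(\cdots))-b(t,X_t,\mu_t,\beta(\cdots))\bigr)$ has linear growth in $(X_t,\mu_t)$ and the stochastic exponential of such an integrand need not have moments of every order. The paper's appendix lemma avoids this by establishing only an $L\log L$ (entropy) bound for $Z^n$ on bounded subsets $K$ of the coefficient space and then localizing via $M(K^c)\to 0$. Fortunately the strong claim is not needed anywhere in your argument: $\zeta^n_T\to 1$ in $L^1$ follows from convergence in probability plus Scheff\'e (since $\E[\zeta^n_T]=1$), and the uniform integrability of the rewards $F_n(\mu,B,X[\beta^n])$ comes directly from the $p'$-moment bound \eqref{moment:Xmfe}, which holds uniformly over all semi-Markov controls, combined with the growth assumption (\ref{assumption:A}.4) and $p'/p>1$ — no moment estimate on the densities is required there.
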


The next proposition is the heart of the argument, showing that for any $\beta \in \A_{\mathrm{semi}}^{\mathrm{cont}}$ we may construct a sequence of deviating strategies for each player in the $n$-player game, with the property that the average value converges to $J(\beta)$. Note that $B$ is adapted to the signal process filtration $\FF^S$ by assumption so we may find a measurable function $\phi : C([0,T];\mathcal{S})  \to \C^d$ such that $B=\phi(S)$ a.s. We make some use of this function in the following.

\begin{proposition} \label{pr:1playerdeviation}
Let $\beta \in \A_{\mathrm{semi}}^{\mathrm{cont}}$. For each $n$ and $k=1,\ldots,n$, define $\beta^{n,k} \in \A_n(S)$ by
\begin{align}
\beta^{n,k}(t,\bm{x}, s) = \beta\Big(t,x^k_t,\Big(\frac{1}{n}\sum_{j=1}^n\delta_{x^j_u}\Big)_{u \in [0,T]}, \phi(s)\Big),  \label{def:pr:1playerdev-specialformbeta}
\end{align}
for $t \in [0,T]$, $\bm{x}=(x^1,\ldots,x^n) \in (\C^d)^n$, $s \in C([0,T];\mathcal{S})$.
Then (along the fixed sequence discussed above)
\begin{align}
\lim_n \frac{1}{n}\sum_{k=1}^n J^{n}_k(\alpha^{n,1},\ldots,\alpha^{n,k-1},\beta^{n,k},\alpha^{n,k+1},\ldots,\alpha^{n,n}) = J(\beta). \label{goal:1playerdeviation}
\end{align}
\end{proposition}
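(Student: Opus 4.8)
The plan is to carry out rigorously the ``Girsanov at the level of the Fokker--Planck equation'' scheme sketched in Section~\ref{se:ideas_of_main_proof}. \emph{Step 1: Girsanov and a weighted empirical measure.} Fix $\beta\in\A_{\mathrm{semi}}^{\mathrm{cont}}$. For each $n,k$, let $\zeta^{n,k}$ solve $d\zeta^{n,k}_t=\zeta^{n,k}_t\,\sigma^{-1}\big[b(t,X^{n,k}_t,\mu^n_t,\beta(t,X^{n,k}_t,\mu^n,B))-b(t,X^{n,k}_t,\mu^n_t,\alpha^{n,k}(t,\bm X^n,S))\big]\cdot dW^k_t$, $\zeta^{n,k}_0=1$. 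First I would check that $\zeta^{n,k}$ is a true (uniformly integrable) martingale, using the well-posedness of the perturbed $n$-player system in Lemma~\ref{le:nplayerSDE}, and record the entropy bound $\sup_{n,k}\E[\zeta^{n,k}_T\log\zeta^{n,k}_T]<\infty$: under the tilted measure $\QQ^{n,k}:=\zeta^{n,k}_T\PP^n$ this equals $\tfrac12\sup_{n,k}\E_{\QQ^{n,k}}\big[\int_0^T|\sigma^{-1}(b(\cdots\beta)-b(\cdots\alpha^{n,k}))|^2\,ds\big]$, finite by the linear growth (\ref{assumption:A}.3) and a Lemma~\ref{le:inequalities}-type moment bound for the deviating state process $Y^{n,k,k}$ (this is where $p'>2$ enters). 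Girsanov and uniqueness in law (Lemma~\ref{le:nplayerSDE}) give $\L_{\PP^n}(\bm Y^{n,k},B,S)=\L_{\QQ^{n,k}}(\bm X^n,B,S)$, so the $k$-th summand in \eqref{goal:1playerdeviation} equals $\E[\zeta^{n,k}_T\Gamma(X^{n,k},\mu^n,B)]$ with $\Gamma(x,m,b):=\int_0^Tf(t,x_t,m_t,\beta(t,x_t,m,b))\,dt+g(x_T,m_T)$. Applying Fubini and $\E[\zeta^{n,k}_T\,|\,\F_t]=\zeta^{n,k}_t$, and setting $\nu^n_t:=\tfrac1n\sum_k\zeta^{n,k}_t\delta_{X^{n,k}_t}$, the left side of \eqref{goal:1playerdeviation} becomes
\begin{align*}
\E\Big[\int_0^T\big\langle\nu^n_t,\,f(t,\cdot,\mu^n_t,\beta(t,\cdot,\mu^n,B))\big\rangle\,dt+\big\langle\nu^n_T,\,g(\cdot,\mu^n_T)\big\rangle\Big].
\end{align*}

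\emph{Step 2: limiting Fokker--Planck SPDE.} Applying It\^o's formula to $\zeta^{n,k}_t\varphi(X^{n,k}_t)$ for $\varphi\in C^\infty_c(\R^d)$, the key point (the formal computation in Section~\ref{se:ideas_of_main_proof}) is that the drift coming from $\alpha^{n,k}$ cancels exactly against the covariation term $d\langle\zeta^{n,k},\varphi(X^{n,k})\rangle_t$, leaving, after averaging over $k$,
\begin{align*}
d\langle\nu^n_t,\varphi\rangle&=\big\langle\nu^n_t,\,\nabla\varphi^\top b(t,\cdot,\mu^n_t,\beta(t,\cdot,\mu^n,B))+\tfrac12\mathrm{tr}[(\sigma\sigma^\top+\gamma\gamma^\top)\nabla^2\varphi]\big\rangle\,dt\\
&\quad+\langle\nu^n_t,\nabla\varphi^\top\rangle\gamma\,dB_t+dM^{n,\varphi}_t,
\end{align*}
where $M^{n,\varphi}$ is an average of orthogonal stochastic integrals against the $W^k$ with $\E\langle M^{n,\varphi}\rangle_T\to0$ (after a localization of $\zeta^{n,k}$, justified by Lemma~\ref{le:inequalities} and removed using the entropy bound). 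I would then show that $\widetilde\mu^n:=\tfrac1n\sum_k\delta_{(X^{n,k},\zeta^{n,k})}$ is tight in $\P(\C^d\times C([0,T];\R_+))$ --- the $\C^d$-marginal as in Lemma~\ref{le:tightness-main}, the $\zeta$-component via Doob's maximal inequality for the nonnegative martingale $\zeta^{n,k}$ ($\PP(\sup_t\zeta^{n,k}_t\ge R)\le1/R$) and an Aldous oscillation bound from the localization. Along a further subsequence $(\widetilde\mu^n,B)\to(\widetilde\mu,B)$; since $\mu^n$ is the time-marginal flow of the $x$-marginal of $\widetilde\mu^n$, it converges to our fixed $\mu$. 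By Fatou, $\nu_t(dx):=\int_{\R_+}y\,\widetilde\mu_t(dx,dy)$ is a.s.\ a finite measure with $\E\int_{\R^d}|x|^{p'}\nu_t(dx)<\infty$. Passing to the limit in the SPDE above --- using $\langle\nu^n_t,\varphi\rangle\to\langle\nu_t,\varphi\rangle$ (which needs uniform integrability of $\{\zeta^{n,k}_t\}$ in the $y$-variable, from the entropy bound) and Kurtz--Protter stability of the $dB$-integral --- shows $\nu$ solves, weakly, the corresponding Fokker--Planck SPDE on $\R^d$ with $\nu_0=\lambda$; testing against $\varphi_R\uparrow1$ together with the moment bound gives $\nu_t(\R^d)=1$ a.s.

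\emph{Step 3: superposition and conclusion.} Now I would invoke the superposition principle Theorem~\ref{th:superposition} with control $\beta$: $B$ is an $\FF^{\nu,\mu,B}$-Brownian motion (inherited from the $n$-player level, where $B$ is $\FF^S$-adapted and $\nu^n,\mu^n$ are $\FF^{\bm X^n,S}$-measurable), and the $L^q$-integrability of $b(t,\cdot,\mu_t,\beta(\cdots))$ against $\nu_t$ follows from (\ref{assumption:A}.3) and the moment bound. Enlarging the space with an independent $\xi\sim\lambda$ and Brownian motion $W$, we get a strong solution $X[\beta]$ of \eqref{def:SDE-semimarkov-strong-optsec} with $\nu_t=\L(X_t[\beta]\,|\,\F^{\mu,B}_t)=\L(X_t[\beta]\,|\,\F^{\mu,B}_T)$; its law coincides with that in the definition of $J(\beta)$ since $X_0^*\sim\lambda$ is likewise independent of $(\mu,B)$. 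As $\mu_t$ and $\beta(t,\cdot,\mu,B)$ are $\F^{\mu,B}_t$-measurable, conditioning gives $\E\big[\int_0^T\langle\nu_t,f(t,\cdot,\mu_t,\beta(t,\cdot,\mu,B))\rangle\,dt+\langle\nu_T,g(\cdot,\mu_T)\rangle\big]=J(\beta)$. Finally I would pass to the limit in the expression of Step 1: $\E[\int_0^T\langle\nu^n_t,f(t,\cdot,\mu^n_t,\beta(t,\cdot,\mu^n,B))\rangle\,dt+\langle\nu^n_T,g(\cdot,\mu^n_T)\rangle]\to J(\beta)$, using continuity of $\beta,f,g$, the convergence $(\widetilde\mu^n,\mu^n,B)\to(\widetilde\mu,\mu,B)$, uniform integrability of $\{\zeta^{n,k}_t\}_{n,k}$ in the $y$-variable (entropy bound) and of $\{\zeta^{n,k}_t|X^{n,k}_t|^p\}_{n,k}$ in the $x$-variable (since $\E[\zeta^{n,k}_t|X^{n,k}_t|^{p'}]=\E_{\QQ^{n,k}}[|X^{n,k}_t|^{p'}]$ is bounded and $p'>p$); when $p=p'=0$ this is immediate. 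Since the limit $\nu$, hence $J(\beta)$, does not depend on the chosen subsequence, \eqref{goal:1playerdeviation} follows along the original sequence.

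The hard part will be controlling the exponential weights $\zeta^{n,k}$ in the unbounded-coefficient case: the tightness of $\widetilde\mu^n$, the vanishing of $M^{n,\varphi}$, and the uniform integrability needed to pass to the limit cannot be based on any $L^p$ ($p>1$) bound for $\zeta^{n,k}$, and must instead be extracted from the entropy bound together with a localization argument. The other delicate point is the passage from the empirical dynamics to the \emph{well-posed} $\R^d$-SPDE for $\nu_t=\int_{\R_+}y\,\widetilde\mu_t(dx,dy)$, whose use in Theorem~\ref{th:superposition} depends on knowing $\nu_t$ remains a probability measure.
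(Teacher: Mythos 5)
Your overall architecture — Girsanov tilting, the entropy bound, an augmented/weighted empirical measure, a limiting Fokker--Planck SPDE for $\nu_t(dx)=\int y\,\widetilde\mu_t(dx,dy)$, and the superposition principle Theorem \ref{th:superposition} — is exactly the one the paper uses, and your Steps 1 and 3 (the change of measure, the uniform integrability $\E[\zeta^{n,k}_t|X^{n,k}_t|^{p'}]=\E|Y^{n,k,k}_t|^{p'}$, the normalization $\nu_t(\R^d)=1$, and the subsequence argument) match the paper's Steps 1, 4 and 5 essentially verbatim.

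The genuine gap is in your Step 2. You propose to apply It\^o's formula to $\zeta^{n,k}_t\varphi(X^{n,k}_t)$ and pass to the limit in the resulting $n$-level equation for the \emph{weighted} empirical measure $\nu^n_t=\tfrac1n\sum_k\zeta^{n,k}_t\delta_{X^{n,k}_t}$. The martingale error in that equation is
\begin{align*}
dM^{n,\varphi}_t=\frac1n\sum_{k=1}^n\Big[\zeta^{n,k}_t\nabla\varphi(X^{n,k}_t)^\top\sigma+\zeta^{n,k}_t\varphi(X^{n,k}_t)(\Xi^{n,k}_t)^\top\Big]\,dW^k_t,
\end{align*}
whose quadratic variation involves $(\zeta^{n,k}_t)^2$. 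In the unbounded-coefficient case there is no moment of $\zeta^{n,k}$ of order $q>1$ bounded uniformly in $n$ (see Remark \ref{re:possible_extensions}); only the $L\log L$ bound \eqref{pf:uniform_bound_zeta} is available, which controls first-moment uniform integrability of the weights but not the quadratic variation of $M^{n,\varphi}$. Your suggested fix, ``a localization of $\zeta^{n,k}$ ... removed using the entropy bound,'' is not spelled out and does not obviously work: stopping when a weight gets large destroys the empirical-measure identity, and the union bound $\sum_k\PP(\sup_t\zeta^{n,k}_t\ge R)\le n/R$ forces $R\gg n$, at which point the truncated second-moment estimate $\tfrac{R}{n}\cdot\tfrac1n\sum_k\E[\sup_t\zeta^{n,k}_t]$ blows up. The paper circumvents this entirely by first deriving the limiting SPDE \eqref{pf:SDE_mutilde} for the \emph{unweighted} empirical measure of the pairs $(X^{n,k},\zeta^{n,k})$, tested against $\varphi\in C^\infty_c(\R^d\times\R)$: there the martingale integrand is $\nabla_x\varphi^\top\sigma+\zeta\,\partial_y\varphi(\cdot,\zeta)\,\Xi^\top$, and $y\partial_y\varphi(x,y)$ is bounded by compact support, so the $L^2$ bound \eqref{pf:opt:xibound1} on $\Xi$ suffices. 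Only \emph{after} the limit does one substitute the unbounded test function $y\varphi(x)$, via the smooth truncations $h_n(y)\varphi(x)$ and the dominated convergence theorem for stochastic integrals, to obtain the SPDE \eqref{pf:SPDE_nu} for $\nu$. You should reorganize Step 2 along these lines (you already have the tight sequence $\widetilde\mu^n$ at hand); as written, the direct passage to the limit in the equation for $\nu^n$ cannot be justified under Assumption \ref{assumption:A} with $p>0$.
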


With these two results, the proof of Theorem \ref{th:mainlimit} is completed immediately:
 
\noindent\textbf{Proof of Theorem \ref{th:mainlimit}.}
By Lemma \ref{le:approx-optimizer-nice}, to prove \eqref{pf:optimality-goal1} it suffices to show that $J(\beta) \le J(\alpha^*)$ for any $\beta \in \A_{\mathrm{semi}}^{\mathrm{cont}}$. Considering $\beta^{n,k}$ as in Proposition \ref{pr:1playerdeviation}, we take limits (along the convergent subsquence described at the beginning of the section):
\begin{align*}
J(\beta) &= \lim_n \frac{1}{n}\sum_{k=1}^n J^{n}_k(\alpha^{n,1},\ldots,\alpha^{n,k-1},\beta^{n,k},\alpha^{n,k+1},\ldots,\alpha^{n,n}) \\
	&\le \lim_n\frac{1}{n}\sum_{k=1}^{n}J^{n}_k(\alpha^{n,1},\ldots,\alpha^{n,n}) + \epsilon_n \\
	& \leq J(\alpha^*).
\end{align*}
where we used the result \eqref{goal:1playerdeviation} of Proposition \ref{pr:1playerdeviation} in the first line, the Nash equilibrium property of $(\alpha^{n,1},\ldots,\alpha^{n,n})$ in the second line, and the inequality \eqref{pf:cost_function_limit} of Theorem \ref{th:limit-dynamics} in the last line (along with $\epsilon_n\rightarrow 0$).
\hfill\qedsymbol 

{\ }

We proceed now to proving the two main subresults Lemma \ref{le:approx-optimizer-nice} and Proposition \ref{pr:1playerdeviation}.

{\ } 

\noindent\textbf{Proof of Lemma \ref{le:approx-optimizer-nice}.} 
Let $M=\L(\mu,B) \in \P(C([0, T];\P^p(\R^d)) \times \C^d)$. Arguing exactly as in \cite[Step 2, proof of Lemma 5.5]{Lacker_closedloop}, we may find a sequence of continuous functions $\beta^n \in \A_{\mathrm{semi}}^{\mathrm{cont}}$ such that
$dt\delta_{\beta^n(t,x,m,w)}(da) \to dt\delta_{\beta(t,x,m,w)}(da)$ in $\V$ for Lebesgue-a.e.\ $x \in \R^d$ and $M$-a.e.\ $(m,w)$. With $d_A$ denoting a metric for the compact space $A$, it follows that
\[
\int_0^T \!\!d_A(\beta^n(t,x,m,w),\beta(t,x,m,w))dt = \!\int_0^T\!\!\int_A d_A(a,\beta(t,x,m,w))\,\delta_{\beta^n(t,x,m,w)}(da)dt \to 0
\]
for Lebesgue-a.e.\ $x \in \R^d$ and $M$-a.e.\ $(m,w)$. By passing to a further subsequence, we may finally assume that
$\beta^n(t,x,m,w) \to \beta(t,x,m,w)$ a.e., where ``a.e." here and throughout this proof will mean ``for Lebesgue-a.e.\ $(t,x)$ and $M$-a.e.\ $(m,w)$." A stability argument for SDEs with random coefficients, worked out in Lemma \ref{ap:le:stability_lemma}, shows that
\begin{align}
\lim_n \E [h_n(\mu,B,X[\beta^n])] = \E [h(\mu,B,X[\beta])] \label{pf:opt:strongconv1}
\end{align}
for any bounded measurable functions $h,h_n : C([0, T]; \P^p(\R^d)) \times \C^d \times \C^d \mapsto \R$ sharing a uniform bound  such  that $h_n(\mu,B,X[\beta]) \to h(\mu,B,X[\beta])$ in probability. In particular, $(\mu, B, X[\beta^n])$ converges in law to $(\mu, B, X[\beta])$. Define $F_n,F : C([0, T]; \P^p(\R^d)) \times \C^d \times \C^d \mapsto \R$ by
\begin{align*}
F_n\big(m,w,(x_t)_{t \in [0,T]}\big) &:= \int_0^T f(t, x_t, m_t, \beta^n(t, x_t , m, w))dt + g(x_T, m_T), \\
F\big(m,w,(x_t)_{t \in [0,T]}\big) &:= \int_0^T f(t, x_t, m_t, \beta(t, x_t , m, w))dt + g(x_T, m_T).
\end{align*}
Since $\beta^n \to \beta$ a.e., the continuity and the growth assumption (\ref{assumption:A}.4) of $f$ and $g$ ensure that $F_n \to F$ a.e.
Indeed, this follows from dominated convergence because, for each fixed $(m,w,(x_t)_{t \in [0,T]})$, the functions $t\mapsto f(t, x_t, m_t, \beta^n(t, x_t , m, w))$ and $t\mapsto f(t, x_t, m_t, \beta(t, x_t , m, w))$ are uniformly bounded.
Now, using \eqref{pf:opt:strongconv1}, we deduce that $F_n(\mu,B,X[\beta^n])$ converges in law to $F(\mu,B,X[\beta])$. 
The uniform moment bound \eqref{moment:Xmfe} and the growth assumption (\ref{assumption:A}.4) ensure that $F_n(\mu,B,X[\beta^n])$ are uniformly integrable, and we deduce that 
\begin{align*}
J(\beta^n) = \E[F_n(\mu,B,X[\beta^n])] \to \E[F(\mu,B,X[\beta])] = J(\beta). 
\end{align*}

{\ } 

We conclude the section with the lengthy proof of Proposition \ref{pr:1playerdeviation}.

\noindent\textbf{Proof of Proposition \ref{pr:1playerdeviation}.}
Recall that $\bm{X}^n=(X^{n,1},\ldots,X^{n,n})$ solves the SDE \eqref{def:stateprocesses-limitsection} and is defined on a probability space $(\Omega^n,\F^n,\FF^n,\PP^n)$. 
For each $k=1,\ldots,n$ we define the vector $\bm{Y}^{n,k} = (Y^{n,k,1},\ldots,Y^{n,k,n})$ of state processes arising when player $k$ switches controls from $\alpha^{n,k}$ to $\beta^{n,k}$. That is, $\bm{Y}^{n,k}$ follows the dynamics
\begin{align*}
dY^{n,k,k}_t &=  b (t,Y^{n,k,k}_t,\mu^{n,k}_t, \beta(t,Y^{n,k,k}_t,\mu^{n,k}, B))dt + \sigma dW^{k}_t + \gamma dB_t, \\
dY^{n,k,i}_t &= b(t,Y^{n,k,i}_t,\mu^{n,k}_t,\alpha^{n,i}(t,\bm{Y}^{n,k}, S))dt + \sigma dW^{i}_t + \gamma dB_t, \quad i \neq k, \\
\mu^{n,k}_t &= \frac{1}{n}\sum_{j=1}^n\delta_{Y_t^{n,k,j}}, \qquad \boldsymbol{Y}^{n, k}_0 = \boldsymbol{X}^n_0. 
\end{align*}
We may assume that the process $\bm{Y}^{n,k}$ lives on the same probability space, but this is merely for notational convenience. 
We then have
\begin{align}
\begin{split}
& \frac{1}{n}\sum_{k=1}^n  J^{n}_k(\alpha^{n,1},\ldots,\alpha^{n,k-1},\beta^{n,k},\alpha^{n,k+1},\ldots,\alpha^{n,n})  \\
 &= \frac{1}{n}\sum_{k=1}^n  \E\left[\int_0^T  f(t,Y^{n,k,k}_t,\mu^{n,k}_t, \beta(t,Y^{n,k,k}_t,\mu^{n,k}, B))dt + g(Y^{n,k,k}_T,\mu^{n,k}_T) \right].
\end{split} \label{pf:opt:Jidentity1}
\end{align}
To show \eqref{goal:1playerdeviation}, we claim that it suffices to show that, for every bounded continuous semi-Markov function $\varphi : [0,T] \times \R^d \times C([0,T];\P^p(\R^d)) \times \C^d \to \R$ and every $t \in [0,T]$, we have
\begin{align}
\lim_n \frac{1}{n}\sum_{k=1}^n  \E[ \varphi(t,Y^{n,k,k}_t,\mu^{n,k},B)] = \E[ \varphi(t,X_t[\beta],\mu,B)], \label{pf:opt:mainlaw}
\end{align}
where $X[\beta]$ solves the SDE \eqref{def:SDE-semimarkov-strong-optsec}, and the limit is as usual along the same subsequence for which $(\mu^n,B)$ converges in law to $(\mu,B)$.
Indeed, the functional
\begin{align*}
\C^d \times C([0,T];\P^p(\R^d)) \times \C^d \ni (y,m,w) \mapsto \int_0^T f(t,y_t,m_t,\beta(t,y_t,m,w))\,dt + g(y_T,m_T)
\end{align*}
is continuous since $\beta$ is continuous (cf.\ Lemma \ref{le:continuity}). For the case $p=p'=0$, the limit \eqref{pf:opt:mainlaw} allows us to conclude the proof. For the case $p>0$, using the growth assumption (\ref{assumption:A}.4) of $f$ and $g$ along with the $p'$-moment bound from Lemma \ref{le:inequalities} and Jensen's inequality, we have
\begin{align*}
\sup_n  \frac{1}{n}\sum_{k=1}^n  \E\left[\left(\int_0^T  f(t,Y^{n,k,k}_t,\mu^{n,k}_t, \beta(t,Y^{n,k,k}_t,\mu^{n,k}, B))\,dt + g(Y^{n,k,k}_T,\mu^{n,k}_T) \right)^{p'/p}\right] < \infty.
\end{align*}
Since $p'/p > 1$, this provides ample uniform integrability to deduce the claimed \eqref{goal:1playerdeviation} from \eqref{pf:opt:mainlaw} and \eqref{pf:opt:Jidentity1}.
Hence, it remains to prove \eqref{pf:opt:mainlaw}.

{\ } 

\noindent\textbf{Step 1.}
We first construct a change of measure and collect its  basic properties, following for now the same strategy as in \cite[Proof of Proposition 5.6]{Lacker_closedloop}. For each $k$, define the processes
\begin{align}
\Xi^{n,k}_t &:= \sigma^{-1}\left[b(t,X^{n,k}_t,\mu^n_t,\beta(t,X^{n,k}_t,\mu^n, B)) - b(t,X^{n,k}_t,\mu^n_t,\alpha^{n,k}(t,\bm{X}^n, S)) \right], \label{pf:def:Xink} \\ 
\zeta^{n,k}_t &:= \exp\left( \int_0^t \Xi^{n,k}_s  \cdot dW^k_s - \frac12 \int_0^t | \Xi^{n,k}_s  |^2\,ds  \right). \label{pf:def:Zetank}
\end{align}
It follows from \cite[Theorem 7.7]{liptser2001statistics} that $(\zeta^{n,k}_t)_{t \in [0,T]}$ is a martingale, not merely a local martingale; the integrability assumptions therein are easily checked using Lemma \ref{le:inequalities} and the linear growth assumption (\ref{assumption:A}.3). Hence, since $\E[\zeta^{n,k}_T]=\zeta^{n,k}_0=1$, we may define the change of measure
\begin{align*}
\frac{d\QQ^{n,k}}{d\PP^n} := \zeta^{n,k}_T.
\end{align*}
By Girsanov's theorem, the processes $(W^i)_{i \neq k}$ and
\begin{align*}
\widetilde{W}^{k}_t := W^k_t - \int_0^t \Xi^{n,k}_s\,ds
\end{align*}
are independent Brownian motions under $\QQ^{n,k}$. We thus see that $\bm{X}^n$ solves under $\QQ^{n,k}$ the same SDE that $\bm{Y}^{n,k}$ solves under $\PP^n$, and we would like to deduce from uniqueness of the SDE that $\QQ^{n,k} \circ (\bm{X}^n,S)^{-1} = \PP^n\circ(\bm{Y}^{n,k},S)^{-1}$. But, recalling Lemma  \ref{le:nplayerSDE-lemma} to do this carefully in the presence of the random factor $S$, we must check that $\bm{W}^{n,k}:=(\widetilde{W}^k,(W^i)_{i \neq k})$ is an an $\FF^{\bm{X}^n,S}$-Brownian motion under the conditional measure $\QQ^{n,k}(\cdot\,|\,S)$, a.s. 
To see this, note by Definition \ref{def:nplayerSDE} that $\bm{W}^n=(W^1,\ldots,W^n)$ is a.s.\ an $\FF^{\bm{X}^n,S}$-Brownian motion under the conditional measure $\PP^n(\cdot\,|\,S)$.
From this (and the $S$-adaptedness of $B$) we deduce easily that $\E[\zeta^{n,k}_T\,|\,S]=1$ a.s., which implies that in fact
\begin{align*}
\frac{d\QQ^{n,k}(\cdot\,|\,S)}{d\PP^n(\cdot\,|\,S)} = \zeta^{n,k}_T.
\end{align*}
Hence, we may apply Girsanov's theorem to the conditional measures to deduce that $\bm{W}^{n,k}$ is a Brownian motion under $\QQ^{n,k}(\cdot\,|\,S)$, a.s., in the filtration $\FF^{\bm{X}^n,S}$.

Now that we know that $\QQ^{n,k} \circ (\bm{X}^n,S)^{-1} = \PP^n \circ (\bm{Y}^{n,k},S)^{-1}$, we deduce (recalling that $B$ is $S$-measurable) for bounded semi-Markov functions $\varphi : [0,T] \times \R^d \times C([0,T];\P^p(\R^d) \times \R^d \times \S) \to \R$ and $t \in [0,T]$ that
\begin{align}
\frac{1}{n}\sum_{k=1}^n  \E[ \varphi(t,Y^{n,k,k}_t,\mu^{n,k},B,S)] = \frac{1}{n}\sum_{k=1}^n  \E[ \zeta^{n,k}_t\varphi(t,X^{n,k}_t,\mu^n,B,S)], \label{pf:opt:measurechange1}
\end{align}
where $\E$ denotes expectation under $\PP^n$ throughout this section.
Hence, we can study the limit \eqref{pf:opt:mainlaw} using the augmented particle system $(X^{n,k},\zeta^{n,k})_{k=1}^n$.
We note here for later use that
\begin{align}
\sup_{t\in [0,T]}\sup_{n \in \N}\frac{1}{n} \sum_{k=1}^n \E [\zeta^{n,k}_t \log \zeta^{n,k}_t] < \infty. \label{pf:uniform_bound_zeta}
\end{align}
Indeed, this follows from the identity
\begin{align*}
\E[\zeta^{n,k}_t \log \zeta^{n,k}_t] &= \frac12 \E\left[\zeta^{n,k}_t \int_0^t |\Xi^{n,k}_s|^2\,ds \right] \\
	&= \frac12 \E \int_0^t \Big| \sigma^{-1}\Big(b(s,Y^{n,k,k}_s,\mu^{n,k}_s,\beta(s,Y^{n,k,k}_s,\mu^{n,k}, B)) \\
	&\qquad\qquad\qquad\qquad - b(s,Y^{n,k,k}_s,\mu^{n,k}_s,\alpha^{n,k}(s,\bm{Y}^{n,k}, S)) \Big) \Big|^2\,ds,
\end{align*}
with the last step using \eqref{pf:opt:measurechange1}, along with the linear growth assumption (\ref{assumption:A}.3) and the $p'>2$ moment bound of Lemma \ref{le:inequalities} (the case $p'=0$ being trivial here).

{\ }

\noindent\textbf{Step 2.}\label{def:section_2}
We next define an augmented empirical measure sequence and prove its tightness.
We use relaxed controls once again, as defined in Section \ref{se:relaxedcontrols}. Recall that we view $\Lambda^{n,k}=dt\Lambda^{n,k}_t(da)=dt\delta_{\alpha^{n,k}(t,\bm{X}^n, S)}(da)$ as a $\V$-valued random variable.
Consider the extended empirical measure
\begin{align*}
\bm{\widehat{\mu}}^n & := \frac{1}{n}\sum_{k=1}^n \delta_{(X^{n,k},\zeta^{n,k},\Lambda^{n,k})},
\end{align*}
viewed as a random element of $\P(\widehat\Omega)$, where $\widehat\Omega := \C^d \times \C^1_+ \times \V$. Here $\C^1_+ := C([0,T];\R_+)$ is the space of strictly positive one-dimensional continuous paths. 

We show in this step that this sequence of $\P(\widehat\Omega)$-valued random variables is tight.  As in the proof of Lemma \ref{le:tightness-main}, it suffices to show the mean measure sequence $\E\bm{\widehat{\mu}}^n$ is tight. By definition,
\begin{align*}
\E \bm{\widehat{\mu}}^n  = \frac{1}{n}\sum_{k=1}^n\L(X^{n,k},\zeta^{n,k},\Lambda^{n,k}).
\end{align*}
It suffices to prove the tightness of each of the three marginal sequences,
\begin{align*}
\frac{1}{n}\sum_{k=1}^n\L(X^{n,k}), \quad \frac{1}{n}\sum_{k=1}^n\L(\zeta^{n,k}) , \quad \frac{1}{n}\sum_{k=1}^n\L(\Lambda^{n,k}) .
\end{align*}
The first was shown in the proof of Lemma \ref{le:tightness-main}. The third is automatic from compactness of $\V$. The second requires more care. By the continuous mapping theorem (since $\zeta^{n,k} > 0$), it suffices to prove tightness of 
\begin{align*}
\frac{1}{n}\sum_{k=1}^n\L( \log \zeta^{n,k}) \in \P(\C^1).
\end{align*}
Since $\log \zeta^{n,k}_0=0$,  using the criterion of Aldous \cite[Theorem 16.11, Lemma 16.12]{kallenberg-foundations}, it suffices to check that
\begin{align}
\lim_{\delta \to 0} \sup_{n \in \N} \sup_{\tau,\tau'} \frac{1}{n} \sum_{k=1}^n \E\left[ |\log \zeta_{\tau}^{n,k} - \log \zeta_{\tau'}^{n,k}| \right] = 0, \label{pf:opt:Aldous2}
\end{align}
where the inner supremum is over all stopping times $(\tau,\tau')$ in $[0,T]$ such that $\tau \le \tau' \le \tau+\delta$.
Note that $\log \zeta^{n,k} = M^{n,k} - \tfrac12 [M^{n,k}]$, where we define the martingales $M_t^{n,k} := \int_0^t \Xi_{s}^{n,k}\cdot dW_s^{k}$.
By It\^o isometry, for any such $(\tau,\tau')$, we have
\begin{align*}
\E  |M_{\tau}^{n,k} - M_{\tau'}^{n,k}|^2  &= \E \int_{\tau}^{\tau'} |\Xi_t^{n,k}|^2\,dt.
\end{align*}
Then \eqref{pf:opt:Aldous2} follows easily in the case $p=p'=0$. In the case $0 < p \le p \vee 2 < p'$,
the moment bounds of Lemma \ref{le:inequalities} and the growth assumption (\ref{assumption:A}.3) of the drift $b$ lead to
\begin{align}
\sup_n \frac{1}{n}\sum_{k=1}^n \E\|\Xi^{n,k}\|_T^{p'} < \infty, \label{pf:opt:xibound1}
\end{align}
and this provides enough uniform integrability to deduce \eqref{pf:opt:Aldous2}.

{\ }

\noindent\textbf{Step 3.} Having shown $(\widehat{\bm{\mu}}^n)$ to be tight in Step 2, and thus also $(\widehat{\bm{\mu}}^n,B)$, this step identifies the dynamics of an arbitrary limit point $(\widehat{\bm{\mu}},B)$ of the latter sequence in terms of a Fokker-Planck SPDE.
Recall that limits occur along the same subsequence for which $(\mu^n,B)$ converges in law to $(\mu,B)$ in $C([0,T];\P^p(\R^d)) \times \C^d$. By applying the continuous map $\pi_t : \widehat\Omega \to \R^d$ given by $\pi_t(x,y,q):=x_t$, we have necessarily that  $\widehat{\bm{\mu}} \circ \pi_t^{-1} = \mu_t$ for each $t$.
Consider also the projection $\widetilde\pi_t : \widehat\Omega \to \R^d \times \R_+$ given by $\widetilde{\pi}_t(x,y,q):=(x_t,y_t)$, and define
\begin{align}
\widetilde\mu^n_t := \widehat{\bm{\mu}}^n \circ \widetilde\pi_t^{-1}, \qquad \widetilde\mu_t := \widehat{\bm{\mu}} \circ \widetilde\pi_t^{-1}, \qquad t \in [0,T]. \label{pf:def:tildemu}
\end{align}
The goal of this step is to show that
\begin{align}
\begin{split}
\langle \widetilde{\mu}_t, \varphi \rangle &= \langle \widetilde{\mu}_0, \varphi \rangle +  \int_0^t\langle \widetilde{\mu}_s, \nabla_x \varphi  \rangle^\top \gamma dB_s \\
&\quad + \int_{\widehat{\Omega} } \int_0^t \int_A  \mathscr{L}_{s, \mu, B}\varphi(x_s, y_s, a)\,q_s(da) \, ds\, \widehat{\boldsymbol{\mu}}(dx,dy,dq),    
\end{split} \label{pf:SDE_mutilde}
\end{align}
a.s., for each $t \in [0,T]$ and $\varphi \in C^\infty_c(\R^d \times \R)$, where the generator $\mathscr{L}$ is defined as follows. For each $(t,m,w) \in [0,T] \times C([0,T];\P^p(\R^d)) \times \C^d$, the function $\mathscr{L}_{t, m, w}\varphi : \R^d \times \R_+ \times A \to \R$ is given by
\begin{align}
\begin{split}
\mathscr{L}_{t, m, w}\varphi(x, y, a) & := \nabla_x \varphi(x, y)\cdot b(t, x, m_t,a) + \frac{1}{2}\mathrm{tr}[(\sigma\sigma^\top+\gamma\gamma^\top)\nabla_x^2\varphi(x,y)] \\
    &\qquad +  \frac12y^2  \partial_{yy}\varphi(x, y) \Big|\sigma^{-1}\big[b(t, x, m_t, \beta(t, x, m, w)) - b(t, x, m_t,a) \big] \Big|^2  \\
    &\qquad + y \partial_y \nabla_x \varphi(x, y) \cdot \Big( b(t, x, m_t, \beta(t, x, m, w)) - b(t, x, m_t,a) \Big).
\end{split}    \label{pf:general_diffusion_n_plus_1_dim}
\end{align}

To justify \eqref{pf:SDE_mutilde}, we argue analogously to the proof of Theorem \ref{th:limit-dynamics}.
First, note that the processes $(X^{n,k},\zeta^{n,k})_{k=1}^n$ satisfy the SDE system
\begin{align*}
dX^{n,k}_t &= b(t,X^{n,k}_t,\mu^n_t,\alpha^{n,k}(t,\bm{X}^n,S))dt + \sigma dW^{k}_t + \gamma dB_t, \quad\quad \mu^n_t = \frac{1}{n}\sum_{k=1}^n\delta_{X^{n,k}_t}, \\
d\zeta^{n,k}_t &= \zeta^{n,k}_t\sigma^{-1}\left( b(t,X^{n,k}_t,\mu^n_t,\beta(t,X^{n,k}_t,\mu^n, B)) - b(t,X^{n,k}_t,\mu^n_t,\alpha^{n,k}(t,\bm{X}^n, S)) \right) \cdot dW^{k}_t.
\end{align*}
Apply It\^o's formula to a test function $\varphi \in C^\infty_c(\R^d \times \R)$ to get
\begin{align*}
    d \langle \widetilde{\mu}_t^n, \varphi \rangle & = \frac{1}{n} \sum_{k=1}^n d \varphi(X_t^{n,k}, \zeta_t^{n,k}) \\
    & = \frac{1}{n} \sum_{k=1}^n \mathscr{L}_{t, \mu^n, B}\varphi(X_t^{n,k}, \zeta_t^{n,k}, \alpha^{n,k}(t, \boldsymbol{X}^n, S))dt + \langle \widetilde{\mu}_t^n, \nabla_x \varphi \rangle^\top \gamma  dB_t +  d \widetilde{M}_t^{n,\varphi},
\end{align*}
where $\widetilde{M}^{n,\varphi}$ is the martingale satisfying $\widetilde{M}^{n,\varphi}_0=0$ and
\begin{align*}
    d \widetilde{M}_t^{n,\varphi} &= \frac{1}{n} \sum_{k=1}^n \Big[\nabla_x \varphi(X_t^{n,k}, \zeta_t^{n,k})^\top \sigma + \zeta_t^{n,k}\partial_y \varphi(X_t^{n,k}, \zeta_t^{n,k}) (\Xi_t^{n,k})^\top\Big] dW_t^{k}.
\end{align*}
We may write this equation as
\begin{align}
    \widehat{F}_t^\varphi(\widehat{\boldsymbol{\mu}}^n, B) = \int_0^t\langle \widetilde{\mu}_s^n, \nabla_x \varphi \rangle^\top \gamma  dB_s + \widetilde{M}_t^{n,\varphi}, \quad t\in[0, T], \label{pf:opt:limiteq1}
\end{align}
where we define $\widehat{F}_t^\varphi : \P^p(\widehat\Omega) \times \C^d \to \R$ by
\begin{align*}
\widehat{F}_t^\varphi(\widehat{\bm{m}}, w) = \int_{\mathcal{C}^d \times \C_+^1 \times \mathcal{V}} \Big[\varphi(x_t, y_t) - \varphi(x_0, y_0) - \int_0^t \int_A \mathscr{L}_{s, m, w}\varphi(x_s, y_s,a) q_s(da)ds \Big] \widehat{\bm{m}}(dx, dy, dq),
\end{align*}
with $m=(\widehat{\bm{m}} \circ \pi_t^{-1})_{t \in [0,T]} \in C([0,T];\P^p(\R^d))$.

We will complete the proof of \eqref{pf:SDE_mutilde} by passing to the limit on both sides in \eqref{pf:opt:limiteq1}. To justify this, we first note that Fatou's lemma and Lemma \ref{le:inequalities} yield the moment bounds
\begin{align}
\E \int_{\widehat{\Omega}} \|x\|_T^{p'} \,\widehat{\boldsymbol{\mu}}(dx,dy,dq) &\le \liminf_{n\to\infty} \E \int_{\widehat{\Omega}} \|x\|_T^{p'} \,\widehat{\boldsymbol{\mu}}^n(dx,dy,dq) \le \sup_n \frac{1}{n}\sum_{k=1}^n \E\|X^{n,k}\|_T^{p'} < \infty. \label{pf:opt:hatmu-moments}
\end{align}
The results of \cite{Kurtz-Protter} ensure that the stochastic integral process
 $\int_0^\cdot \langle \widetilde{\mu}_s^n, \nabla_x \varphi \rangle^\top \gamma  dB_s$ converges in law in $\C^1$ to $\int_0^\cdot \langle \widetilde{\mu}_s, \nabla_x \varphi \rangle^\top \gamma  dB_s$, 
since $\widetilde{\mu}^n$ converges in law to $\widetilde{\mu}$ in $C([0,T];\P(\R^d \times \R))$.
In fact, this weak convergence occurs jointly along with $(\widehat{\bm{\mu}}^n,B)$ converging to $(\widehat{\bm{\mu}},B)$.
(See the bullet points before \eqref{def:equation_for_mu_barre} for additional details.)
Next, notice that since $\varphi$ has compact support, the functions $y \partial_y \varphi(x,y)$ and $y^2 \partial_{yy}\varphi(x,y)$ are uniformly bounded. The function $\mathscr{L}_{s, m, w}\varphi$ is continuous, since $\beta$ is, and is bounded in all variables except for $m$. In the case $p=0$, the functional is actually bounded and, since $\mu^n$ converges in law to $\mu$ in $C([0,T];\P(\R^d))$, we deduce that $\widehat{F}_t^\varphi(\widehat{\boldsymbol{\mu}}^n, B)$ converges in law to $\widehat{F}_t^\varphi(\widehat{\boldsymbol{\mu}}, B)$; see Lemma \ref{le:continuity} with $p=0$. Similarly, for the case $0 < p \le p \vee 2 < p'$, thanks to the growth assumption (\ref{assumption:A}.3) and the fact that $\mu^n$ converges in law to $\mu$ in $C([0,T];\P^{p \vee 2}(\R^d))$ by Lemma \ref{le:tightness-main}, we apply Lemma \ref{le:continuity} with $p \vee 2$.

Finally, we argue that $\widetilde{M}_t^{n,\varphi} \to 0$ in probability. This is because $\widetilde{M}^{n,\varphi}$, is an average of orthogonal martingales:
\begin{align*}
\E|\widetilde{M}_t^{n,\varphi}|^2 &= \frac{1}{n^2} \sum_{k=1}^n \E\int_0^t \Big|\nabla_x \varphi(X_s^{n,k}, \zeta_s^{n,k})^\top \sigma + \zeta_s^{n,k}\partial_y \varphi(X_s^{n,k}, \zeta_s^{n,k})( \Xi_s^{n,k})^\top\Big|^2 ds \\
    &\leq \frac{C}{n} \Big(1 + \sup_{n\in\N} \frac{1}{n} \sum_{k=1}^n \E\|\Xi^{n,k}\|_T^2\Big),
\end{align*}
where the constant $C$ depends only on $\varphi$, $T$, and $\sigma$. We deduce from \eqref{pf:opt:xibound1} that  $\E|\widetilde{M}_t^{n,\varphi}|^2 \to 0$.

Passing now to the limit in \eqref{pf:opt:limiteq1}, we deduce that $(\widehat{\bm{\mu}},B)$ satisfies
\begin{align*}
    \widehat{F}_t^\varphi(\widehat{\boldsymbol{\mu}}, B) = \int_0^t\langle \widetilde{\mu}_s, \nabla_x \varphi \rangle^\top \gamma  dB_s,
\end{align*}
a.s., for each $t \in [0,T]$  and $\varphi \in C^\infty_c(\R^d \times \R)$.
Expanding the definition of $\widehat{F}_t^\varphi$ gives \eqref{pf:SDE_mutilde}.

{\ }

\noindent\textbf{Step 4.} In this step, we explain how to ``undo" the change of measure after passing to the limit. Write $(\widehat{\bm{\mu}},B)$ for a subsequential limit point of $(\widehat{\bm{\mu}}^n,B)$, and again denote $\mu_t := \widehat{\bm{\mu}} \circ \pi_t^{-1}$ and $\widetilde{\mu}_t := \widehat{\bm{\mu}} \circ \widetilde{\pi}_t^{-1}$ as in the previous step. 
Recalling \eqref{pf:opt:measurechange1}, we may now deduce for bounded continuous semi-Markov functions $\varphi$ that
\begin{align}
\lim_n \frac{1}{n}\sum_{k=1}^n  \E [ \varphi(t,Y^{n,k,k}_t,\mu^{n,k},B)] &= \lim_n \frac{1}{n}\sum_{k=1}^n  \E[ \zeta^{n,k}_t\varphi(t,X^{n,k}_t,\mu^n,B)] \nonumber \\
	&=  \lim_n \E \int_{\R^d \times \R_+}y\varphi(t,x,\mu^n,B)\,\widetilde{\mu}^n_t(dx,dy) \nonumber \\
	&= \E \int_{\R^d \times \R_+}y\varphi(t,x,\mu,B)\,\widetilde{\mu}_t(dx,dy). \label{pf:opt:measurechange2}
\end{align}
Indeed, to justify the limit \eqref{pf:opt:measurechange2}, simply note that \eqref{pf:uniform_bound_zeta} implies the uniform integrability
\begin{align}
\lim_{r\to\infty}\sup_n \E \int_{\R^d \times \R_+}y 1_{\{y \ge r\}} \,\widetilde{\mu}^n_t(dx,dy) &= \lim_{r\to\infty}\sup_n \frac{1}{n} \sum_{i=1}^n \E[ \zeta^{n.k}_t 1_{\{\zeta^{n.k}_t \ge r\}}] = 0. \label{pf:opt:zeta-unifint}
\end{align}
Define one final stochastic measure flow $\nu=(\nu_t)_{t \in [0,T]}$ by setting, for bounded continuous functions $\varphi : \R^d \to \R$,
\begin{align}
\int_{\R^d} \varphi \,d\nu_t &:= \int_{\R^d \times \R_+} y\varphi(x) \, \widetilde{\mu}_t(dx,dy) = \int_{\widehat\Omega} y_t\varphi(x_t)\,\widehat{\bm \mu}(dx,dy,dq). \label{pf:opt:def:nut}
\end{align}
We will later show that $\nu_t$ is a probability measure, but for now just note that it is indeed a well-defined positive finite measure, since applying \eqref{pf:opt:measurechange2} with $\varphi \equiv 1$ yields
\begin{align}
\E[\nu_t(\R^d)] &= \E\int_{\R^d \times \R_+} y\,\widetilde{\mu}_t(dx,dy) = 1. \label{pf:opt:y-nu-integrable}
\end{align}
Note that $\widetilde\mu_0 = \lambda \times \delta_0$ a.s., since $\zeta^{n,k}_0=1$ for all $n$ and $k$, and thus $\nu_0=\lambda$.

To complete the proof of \eqref{pf:opt:mainlaw}, and thus the whole proposition, we claim that it now suffices to show that we can enlarge the probability space to support an independent Brownian motion $W$ and an independent $\R^d$-valued random variable $\xi \sim \lambda$, such that
\begin{align}
\nu_t = \L(X_t[\beta]\,|\,\F^{\nu,\mu,B}_t), \ \ a.s., \text{ for each } t \in [0,T], \label{pf:opt:nu-identity}
\end{align}
where $X[\beta]$ is the unique strong (i.e., $\FF^{\xi,\nu,\mu,B,W}$-adapted) solution of the SDE
\[
d X_t = b(t, X_t,\mu_t,\beta(t, X_t,\mu, B))dt + \sigma dW_t + \gamma dB_t, \quad X_0 = \xi. 
\]
Indeed, once this is justified, we deduce \eqref{pf:opt:mainlaw} by applying \eqref{pf:opt:measurechange2}, the definition of $\nu_t$, and then \eqref{pf:opt:nu-identity} to get
\begin{align*}
\lim_n \frac{1}{n}\sum_{k=1}^n  \E[ \varphi(t,Y^{n,k,k}_t,\mu^{n,k},B)] &= \E \int_{\R^d \times \R_+}y\varphi(t,x,\mu,B)\,\widetilde{\mu}_t(dx,dy) \\
	&= \E \int_{\R^d } \varphi(t,x,\mu,B)\,\nu_t(dx) \\
	&= \E[\varphi(t,X_t[\beta],\mu,B)].
\end{align*}

To prove \eqref{pf:opt:nu-identity}, we will ultimately  apply the superposition principle, Theorem \ref{th:superposition}. Recalling the notation for the infinitesimal generator $L$ introduced in \eqref{def:generator}, we must show that $\nu$ satisfies the SPDE
\begin{align}
    \langle \nu_t, \varphi \rangle = \langle \lambda, \varphi \rangle +  \int_0^t \int_{\R^d} L_{s, \mu_s} \varphi(x,\beta(s,x,\mu,B)) \,\nu_s(dx) \, ds + \int_0^t \langle \nu_s, \nabla \varphi \rangle^ \top \gamma \,dB_s,
    \label{pf:SPDE_nu}
\end{align} 
for $t \in [0,T]$ and $\varphi\in C_c^\infty(\R^d)$,
along with
\begin{align}
\E\int_0^T\int_{\R^d} | b(t,x,\mu_t,\beta(t,x,\mu,B))|^{p'}  \nu_t(dx)\,dt < \infty. \label{pf:opt:nu-moment}
\end{align}
Indeed, once \eqref{pf:SPDE_nu} and \eqref{pf:opt:nu-moment} are established,  and once we showed that the process $\nu$ takes values in $\P(\R^d)$,  we may deduce \eqref{pf:opt:nu-identity} from Theorem \ref{th:superposition} (for the case $p=p'=0$, the drift $b$ is bounded and we can apply directly Theorem \ref{th:superposition} from \eqref{pf:SPDE_nu}).

{\ }

\noindent\textbf{Step 5.} In this step we complete the proof of the proposition by showing that the measure flow $\nu$ defined in \eqref{pf:opt:def:nut} takes values in $\P(\R^d)$ and satisfies the SPDE \eqref{pf:SPDE_nu} along with \eqref{pf:opt:nu-moment}.
Formally, this will follow by applying the SPDE \eqref{pf:SDE_mutilde} derived for $\widetilde{\mu}$ in Step 3, with test functions of the form $(x,y)\mapsto y\varphi(x)$. To do this rigorously, we must approximate $y$ by a smooth function of compact support.
Consider $u(y) := \exp(1-(1-y^2)^{-1})1_{(-1,1)}(y)$, and let $h_n(y)=yu(y/n)$. The functions  $h_n \in C^\infty_c(\R)$ satisfy the following properties:
\begin{enumerate}[(i)]
\item $h_n(y) \uparrow y$, $h_n'(y) \to 1$, $h_n''(y) \to 0$, as $n\to\infty$, for each $y \ge 0$.
\item $0 \leq h_n(y) \leq y$ for each $y \ge 0$.
\item $\sup_n\sup_{y \ge 0}(|h_n'(y)|+|y h_n''(y)|) < \infty$.
\end{enumerate}
Let $\varphi \in C^\infty_c(\R^d)$, and apply the test function $\psi_n(x,y) := h_n(y)\varphi(x)$ in the SPDE \eqref{pf:SDE_mutilde} to get
\begin{align}
\begin{split}
\langle \widetilde{\mu}_t, \psi_n \rangle = \ & \langle \widetilde{\mu}_0, \psi_n \rangle  + \int_0^t \langle \widetilde{\mu}_s, \nabla_x \psi_n\rangle^\top \gamma  dB_s \\
& + \int_{\widehat{\Omega}} \int_0^t \int_A \mathscr{L}_{s, \mu, B}\psi_n(x_s, y_s,a) q_s(da)ds \widehat{\boldsymbol{\mu}}(dx, dy, dq).
\end{split} \label{pf:opt:testfunctionSPDE}
\end{align}
For the first two terms, use $0 \le h_n(y) \le y$ and with dominated convergence, justified by  \eqref{pf:opt:y-nu-integrable}, to get
\begin{align}
\lim_{n\to\infty} \langle \widetilde{\mu}_t, \psi_n \rangle &= \int_{\R^d\times \R_+} y \varphi(x)\, \widetilde{\mu}_t(dx,dy) = \langle \nu_t,\varphi\rangle, \quad \text{for } t \in [0,T]. \label{pf:opt:testfunctionSPDE-just1}
\end{align}
To handle the stochastic integral term, we cannot simply apply It\^o isometry to show $L^2$ convergence, because we do not yet know if the limiting integrand $\langle \nu_s,\nabla\varphi\rangle$ is square-integrable.
We instead appeal to the dominated convergence theorem  for stochastic integrals (see \cite[Theorem 1.26]{Kallianpur} or \cite[Theorem IV.32]{protter2005stochastic}). The processes $(\langle \widetilde{\mu}_s, \nabla_x \psi_n \rangle)_{s\in[0, T]}$ for each $n \in \N$ and $(\langle \nu_s, \nabla \varphi \rangle)_{s\in[0, T]}$ are continuous.
Moreover, we have for each $n$,
\begin{align*}
| \nabla_x \psi_n(x,y)| &= |h_n(y)\nabla \varphi(x)| \le y\|\nabla\varphi\|_\infty, \quad y \ge 0, \ x \in \R^d.
\end{align*}
Hence, since $\int y\,\widetilde{\mu}_s(dx,dy) < \infty$ a.s.\ by \eqref{pf:opt:y-nu-integrable}, dominated convergence yields
\begin{align*}
\langle \widetilde{\mu}_s, \nabla_x \psi_n \rangle = \int_{\R^d\times\R_+}\!\!h_n(y)\nabla\varphi(x)\,\widetilde{\mu}_s(dx,dy) \to \int_{\R^d\times\R_+}\!\!y\nabla \varphi(x)\,\widetilde{\mu}_s(dx,dy) = \langle \nu_s, \nabla \varphi \rangle,  
\end{align*}
a.s., for each $s$. Note also that $|\langle \widetilde{\mu}_s, \nabla_x \psi_n \rangle| \le \nu_s(\R^d)\|\nabla \varphi\|_\infty$, and $\E\int_0^t\nu_s(\R^d)\,ds=t<\infty$ by  \eqref{pf:opt:y-nu-integrable}. Hence, the dominated convergence theorem for stochastic integrals yields
\begin{align}
\lim_n \int_0^t \langle \widetilde{\mu}_s, \nabla_x \psi_n \rangle^\top \gamma  dB_s =  \int_0^t \langle \nu_s, \nabla \varphi \rangle^\top \gamma  dB_s, \label{pf:opt:testfunctionSPDE-just2}
\end{align}
in probability.
It remains to deal with the final term in \eqref{pf:opt:testfunctionSPDE}. First, plug $\psi_n$ into the generator \eqref{pf:general_diffusion_n_plus_1_dim} to obtain 
\begin{align}
    \mathscr{L}_{t, m, w}\psi_n(x, y, a) & = y  h_n'(y) \nabla \varphi(x) \cdot b(t, x, m_t, \beta(t, x, m, w)) + \frac12 h_n(y) \mathrm{tr}[(\sigma\sigma^\top+\gamma\gamma^\top)\nabla^2 \varphi(x)] \nonumber \\ 
    & + \Big(h_n(y) - y  h_n'(y)\Big) \nabla \varphi(x) \cdot b(t, x, m_t,a) \label{pf:gen11} \\
     & + \frac{1}{2}  y^2  h_n''(y)   \varphi(x) \  \Big|\sigma^{-1}\Big(b(t, x, m_t, \beta(t, x, m, w)) - b(t, x, m_t,a) \Big) \Big|^2. \nonumber 
\end{align}
Using the compact support of $\varphi$, the properties (ii) and (iii) of $(h_n)$ above, and the linear growth assumption (\ref{assumption:A}.3), we get
\begin{align}
\Big|\mathscr{L}_{t, m, w}\psi_n(x, y, a) \Big| \leq C  y \Big(1 +1_{\{p>0\}} \int_{\R^d} |z|^{p\vee 2} \, m_t(dz)\Big). \label{pf:Lbound11}
\end{align}
for a constant $C$ depending only on $\varphi$ and the constant $c_1$ from (\ref{assumption:A}.3). As $n\to\infty$, we have $(h_n(y),h_n'(y),h_n''(y)) \to (y,1,0)$ and thus the last two lines of \eqref{pf:gen11} vanish, yielding
\begin{align*}
\mathscr{L}_{t, m, w}\psi_n(x, y, a) \to yL_{t, m_t} \varphi(x,\beta(t,x,m,w)).
\end{align*}
where the generator $L$ was defined in \eqref{def:generator}. Note that the right-hand side does not depend on $a$.
Since $p' > p \vee 2$, the moment bound \eqref{pf:opt:hatmu-moments} along with \eqref{pf:Lbound11} provide the uniform integrability needed to complete the justification of taking limits in the last term in \eqref{pf:opt:testfunctionSPDE} (and the bounded case $p'=0$ is evident).  Recalling \eqref{pf:opt:testfunctionSPDE-just1} and \eqref{pf:opt:testfunctionSPDE-just2}, taking limits in \eqref{pf:opt:testfunctionSPDE} gives
\begin{align*}
\int_{\R^d \times \R_+} y\varphi(x) \, \widetilde{\mu}_t(dx,dy) = \ & \int_{\R^d \times \R_+} y\varphi(x) \, \widetilde{\mu}_0(dx,dy) + \int_0^t \left(\int_{\R^d \times \R_+} y\nabla\varphi(x)^\top \, \widetilde{\mu}_s(dx,dy) \right) \gamma  dB_s \\
& + \int_{\widehat{\Omega}} \int_0^t \int_A y_sL_{s, \mu_s}\varphi(x_s,\beta(s,x_s,\mu,B))\, q_s(da)\,ds\, \widehat{\boldsymbol{\mu}}(dx, dy, dq).
\end{align*}
Note that there is no dependence on $a$ or $q$ in the innermost integrand in the final term. Using Fubini's theorem, and recalling that $\widetilde\mu_s$ denotes the $(x_s,y_s)$-marginal under $\widehat{\bm\mu}(dx,dy,dq)$ as defined in \eqref{pf:def:tildemu}, the last term can be rewritten as
\begin{align*}
\int_0^t \int_{\R^d \times \R_+}yL_{s, \mu_s}\varphi(x,\beta(s,x,\mu,B))\, \widetilde{\mu}_s(dx,dy)\,ds.
\end{align*}
Recalling the definition of $\nu$, we arrive at the SPDE \eqref{pf:SPDE_nu}.
The integrability requirement \eqref{pf:opt:nu-moment} follows from the growth assumption (\ref{assumption:A}.3) and the fact that, by Fatou's lemma,
\begin{align*}
\E\int_0^T\int_{\R^d} |x|^{p'}\nu_t(dx)dt &= \E\int_{\widehat{\Omega}}\int_0^T y_t|x_t|^{p'}\,dt\, \widehat{\bm{\mu}}(dx,dy,dq) \\
	&\le \liminf_{n\to\infty} \frac{1}{n}\sum_{k=1}^n \E\int_0^T\zeta^{n,k}_t |X^{n,k}_t|^{p'}\,dt \\
	&= \liminf_{n\to\infty} \frac{1}{n}\sum_{k=1}^n \E\int_0^T |Y^{n,k,k}_t|^{p'}\,dt,
\end{align*}
for each $t \in [0,T]$.
The last equality above follows from the change of measure argument of Step 1, and the right-hand side is finite by Lemma \ref{le:inequalities}.  The final step in the proof is to show that $\nu_t(\R^d) = 1$ a.s.\ for all $t\in [0, T]$. Formally, since $\nu_0 = \lambda$, plugging $\varphi\equiv 1$ in \eqref{pf:SPDE_nu} would imply that $d\nu_t(\R^d)=0$ and thus $\nu_t(\R^d) = \lambda(\R^d) = 1$. Because we may only use \eqref{pf:SPDE_nu} with smooth functions of compact support, we justify this using the test functions $\varphi_n(x) = u(|x|/n)$,  sending $n\to \infty$ with similar arguments to those above.
\hfill\qedsymbol

\section{Constructing n-player equilibria from mean field equilibria}\label{se:constructing_nash_eq}

In this section, we give the construction of approximate Nash equilibria for the $n$-player game from a weak MFE, proving Theorem \ref{th:converselimit-strongMFE}.
 The proof follows the same general strategy as that of \cite[Theorem 3.10]{Lacker_closedloop}, but there are additional technical details due to the common noise.

\subsection*{Proof of Theorem \ref{th:converselimit-strongMFE}}

Let $(\Omega,\F,\FF,\PP ,W , B, \alpha^*,\mu, X^*)$ be a weak MFE, in the sense of Definition \ref{def:weakMFE}. We consider the signal process $S = (\mu, B)$, which takes values in $\S = \P(\R^d) \times \R^d$. We will write $s = (s^1, s^2)$ for a generic element $s \in C([0,T];\mathcal{S})$. We will also denote by $s^1_t$ and $s^2_t$ the respective values at time $t$. Recall that $X^*$ satisfies
\begin{align}
dX^*_t = b(t,X^*_t,\mu_t,\alpha^*(t,X^*_t, \mu, B))dt + \sigma dW_t +\gamma dB_t, \quad X^*_0 \sim \lambda.  \label{pf:strongMFElimit0}
\end{align}
Define $\alpha^{n,i} \in \A_n(S)$ for each $n \ge i \ge 1$ by setting:
\begin{align*}
\alpha^{n,i}(t,\bm{x}, s) = \alpha^*(t,x^i_t, s)
\end{align*}
for $t \in [0,T]$, $\bm{x}=(x^1,\ldots,x^n) \in (\C^d)^n$ and $s \in C([0,T];\mathcal{S})$. Define
\begin{align*}
\epsilon_n := \sup_{\beta \in \A_n(S)}J^n_1(\beta,\alpha^{n,2},\ldots,\alpha^{n,n}) - J^n_1(\alpha^{n,1},\ldots,\alpha^{n,n}).
\end{align*}
Clearly $\epsilon_n \ge 0$. By symmetry, $\bm{\alpha}^n=(\alpha^{n,1},\ldots,\alpha^{n,n})$ is an $\epsilon_n$-Nash equilibrium.

Enlarging the probability space $\Omega$, we may construct independent Brownian motions $(W^i)_{i \in \N}$ and $\R^d$-valued random variables $\xi^i$ with law $\lambda$.
We may then let $\bm{X}^n=(X^{n,1},\ldots,X^{n,n})$ denote the associated state process, given as the unique solution of
\begin{align}
\begin{split}
dX^{n,i}_t &= b(t,X^{n,i}_t,\mu^n_t,\alpha^*(t,X^{n,i}_t, \mu, B))dt + \sigma dW^i_t + \gamma dB_t, \quad X^{n,i}_0=\xi^i, \\
\mu^n_t &:= \frac{1}{n}\sum_{k=1}^n\delta_{X^{n,k}_t}.
\end{split}
\label{pf:strongMFE_nplayer}
\end{align}
Because this SDE is Markovian in $\bm{X}^n$, the solution is strong by Lemma \ref{ap:le:SDE-uniq-strong}. It remains to show that $\epsilon_n \to 0$ and that $\L(\mu^n, B)\to \L(\mu, B)$ in $\P(C([0, T]; \P(\R^d) \times \R^d))$. 

{\ }

\noindent\textbf{Step 1.} We first claim that $\mu^n$ converges in probability to  $\mu$ in $\CP$, and also
\begin{align}
\langle \mu^n_t , \varphi(S,\cdot)\rangle \to \langle \mu_t, \varphi(S,\cdot)\rangle, \quad \text{in probability} \label{pf:converse:strongprop}
\end{align}
for each $t \in [0,T]$ and each bounded measurable (not necessarily continuous) function $\varphi : C([0,T];\S) \times \R^d \to \R$.
We wish to apply Theorem \ref{ap:th:strong-propagation-of-chaos}, which gives propagation of chaos for McKean-Vlasov equations under minimal continuity assumptions in $x$, which is convenient here because we do not have continuity of $\alpha^*$. We cannot immediately apply Theorem \ref{ap:th:strong-propagation-of-chaos} due to the presence of common noise, but a well known change of variables offers a workaround: Define $\widetilde{X}_t^*:=X_t^*-\gamma B_t$ and $\widetilde{X}^{n,i}_t := X^{n,i}_t-\gamma B_t$. 
For $m \in \P(\R^d)$ and $w \in \R^d$, write $m(\cdot - w)$ for the image of $m$ under the map $x \mapsto x + w$.
Let $\widetilde\mu_t := \mu_t(\cdot + \gamma B_t)$.
For $s \in C([0,T];\S)$ define $\widetilde{b}_s : [0,T] \times \R^d \times \P(\R^d) \to \R$ by
\begin{align*}
\widetilde{b}_s(t,x,m) := b(t,x + \gamma s^2_t,m(\cdot-\gamma s^2_t),\alpha^*(t,x_t+\gamma s^2_t,s)).
\end{align*}
We then find that $\widetilde{X}^*$ and $\widetilde{X}^{n,i}$  satisfy
\begin{align*}
d\widetilde{X}^*_t = \widetilde{b}_S(t,\widetilde{X}^*_t ,\widetilde\mu_t))dt + \sigma dW_t, \quad \widetilde{X}^*_0 = X^*_0,
\end{align*}
as well as
\begin{align*}
d\widetilde{X}^{n,i}_t &= \widetilde{b}_{S}(t,\widetilde{X}^{n,i}_t,\widetilde\mu^n_t)dt + \sigma dW^i_t, \quad \ \  \widetilde\mu^n_t = \frac{1}{n}\sum_{k=1}^n\delta_{\widetilde{X}^{n,k}_t}.
\end{align*}
Note that we have
\begin{align*}
\E\left[\varphi(\widetilde{X}^*_t) \,|\, \F^S_t\right] &= \E\left[\varphi(X^*_t - \gamma B_t) \,|\, \F^{\mu, B}_t\right] = \langle \mu_t, \varphi(\cdot - \gamma B_t)\rangle = \langle \widetilde\mu_t, \varphi\rangle,
\end{align*}
a.s., for each $t \in [0,T]$ and bounded continuous $\varphi : \R^d \to \R$. That is, $\widetilde\mu_t = \L(\widetilde{X}^*_t\,|\,\F^S_t)$.
Note that $\widetilde\mu$ is $S$-measurable, so we may write $\widetilde\mu=\widehat\mu(S)$ a.s.\ for some measurable function $\widehat\mu : C([0,T];\S) \to \CP$.
Since $X^*_0$, $S$, and $W$ are independent, we find that conditionally on $S$ it holds a.s.\ that the then-non-random measure flow $\widetilde\mu_t$ satisfies an ordinary McKean-Vlasov equation, without common noise. We may thus safely condition on the signal $S=(\mu,B)$. With the signal frozen, we may then apply Theorem \ref{ap:th:strong-propagation-of-chaos} to get a strong form propagation of chaos. Namely, we get the weak convergence $\L(\widetilde\mu^n\,|\,S=s) \to \delta_{\widehat\mu(s)}$ in $\P(\CP)$, for (a.e.) $s \in C([0,T];\S)$ by Theorem \ref{ap:th:strong-propagation-of-chaos}. This easily implies $\widetilde\mu^n \to \widehat\mu(S) = \widetilde\mu$ in probability. Changing variables, this implies $\mu^n \to \mu$ in probability. Moreover, the last claim of Theorem \ref{ap:th:strong-propagation-of-chaos} yields the conditional convergence in probability
\begin{align*}
\PP\left(|\langle \widetilde\mu^n_t - \widehat\mu_t(s), \varphi(s,\cdot)\rangle| \ge \epsilon \,|\, S = s\right) \to 0,
\end{align*}
for each $\epsilon > 0$, $t \in [0,T]$, and bounded measurable function $\varphi : C([0,T];\S) \times \R^d \to \R$.
Taking expectations and recalling $\widetilde\mu=\widehat\mu(S)$ yields
\begin{align}
\PP\left(|\langle \widetilde\mu^n_t - \widetilde\mu_t, \varphi(S,\cdot)\rangle| \ge \epsilon \right) \to 0,
\end{align}
again for all $\epsilon$ and $t$ and all such $\varphi$.
Reversing the change of variables leads to \eqref{pf:converse:strongprop}.

{\ }

\noindent\textbf{Step 2.} We next claim that
\begin{align}
\lim_{n\to\infty} \E \int_{\R^d} f(t,x,\mu^n_t,\alpha^*(t,x, \mu, B)) \, \mu^n_t(dx)  = \E \int_{\R^d} f(t,x,\mu_t,\alpha^*(t,x, \mu, B)) \, \mu_t(dx), \label{pf:converse:flimit1}
\end{align}
for each $t \in [0,T]$. Note that this is not immediate from the weak convergence $\mu^n \to \mu$ because  $\alpha^*$ may be discontinuous, nor is it immediate from Step 1 because $f$ depends on $\mu^n_t$.
First, let $r > 0$, and let $B_r$ denote the centered open ball in $\R^d$ of radius $r$. Consider the function
\begin{align*}
F_r(m,m') := \sup_{a \in A, \, x \in B_r}\left|f(t,x,m,a )- f(t,x,m',a)\right|.
\end{align*}
By joint continuity of $f(t,\cdot)$, using the compactness of $A$ and the closure of $B_r$, it holds that $F(m,m') \to 0$ as $m' \to m$ weakly, for each $m$.
Thus
\begin{align*}
&\left|\E \int_{\R^d} \big(f(t,x,\mu^n_t,\alpha^*(t,x, \mu, B)) - f(t,x,\mu_t,\alpha^*(t,x, \mu, B))\big) \, \mu^n_t(dx) \right| \\
&\quad \le \E F_r(\mu_t,\mu^n_t) + 2\|f\|_\infty\E\mu^n_t(B_r^c).
\end{align*}
This tends to zero by sending $n\to\infty$ and then $r\to\infty$. Indeed, since $\mu^n_t\to\mu_t$ in probability, the Portmanteau theorem yields $\limsup_{n\to\infty}\E\mu^n_t(B_r^c) \le \E\mu_t(B_r^c)$. To prove \eqref{pf:converse:flimit1}, it now suffices to show that
\begin{align*}
\lim_{n\to\infty} \E \int_{\R^d} f(t,x,\mu_t,\alpha^*(t,x, \mu, B)) \, \mu^n_t(dx)  = \E \int_{\R^d} f(t,x,\mu_t,\alpha^*(t,x, \mu, B)) \, \mu_t(dx).
\end{align*}
which follows directly from \eqref{pf:converse:strongprop}.

{\ }

\noindent\textbf{Step 3.} It remains to show that $\epsilon_n \to 0$. We do this in two steps. First, we claim that 
\begin{align}
\lim_{n\to\infty}J^n_1(\bm\alpha^n) = \E\left[\int_0^Tf(t,X^*_t,\mu_t,\alpha^*(t,X^*_t,  \mu, B))dt + g(X^*_T,\mu_T)\right]. \label{pf:converse11}
\end{align}
To see this, use symmetry to express $J^n_1(\bm{\alpha}^n)$ in terms of the empirical measure $\mu^n$, and then use the result \eqref{pf:converse:flimit1} of Step 2 to take limits:
\begin{align*}
\lim_{n\rightarrow\infty}J^n_1(\bm{\alpha}^n) &= \lim_{n\rightarrow\infty}\E\left[\int_0^T f(t,X^{n,1}_t,\mu^n_t,\alpha^*(t,X^{n,1}_t, \mu, B))dt + g(X^{n,1}_T,\mu^n_T)\right] \\
	&= \lim_{n\rightarrow\infty}\E \Bigg[\int_0^T\int_{\R^d} f(t,x,\mu^n_t,\alpha^*(t,x, \mu, B)) \, \mu^n_t(dx) \, dt + \int_{\R^d}g(x,\mu^n_T) \, \mu^n_T(dx) \Bigg] \\
	&= \E \Bigg[\int_0^T\int_{\R^d} f(t,x,\mu_t,\alpha^*(t,x, \mu, B)) \, \mu_t(dx) \, dt + \int_{\R^d}g(x,\mu_T) \, \mu_T(dx) \Bigg].
\end{align*}
This equals the right-hand side of \eqref{pf:converse11}, by the consistency condition $\mu_t = \L(X^*_t\,|\,\mu, B)$ and Fubini's theorem.

{\ }

\noindent\textbf{Step 4.} To complete the proof that  $\epsilon_n \to 0$, the remaining step is to choose for each $n$ an arbitrary $\beta^{n} \in \A_n(S)$ such that 
\begin{align*}
J^n_1(\beta^n, \alpha^{n,2},\ldots, \alpha^{n,n}) \ge \sup_{\beta \in \A_n(S)}J^n_1(\beta,\alpha^{n,2},\ldots, \alpha^{n,n}) -  \frac{1}{n},
\end{align*}
and then argue that (recalling $S=(\mu,B)$, and favoring this shorter notation henceforth)
\begin{align}
\limsup_{n\rightarrow\infty} J^n_1(\beta^n,\alpha^{n,2},\ldots,\alpha^{n,n}) \le \E\left[\int_0^T f(t,X^*_t,\mu_t,\alpha^*(t,X^*_t,S))dt + g(X^*_T,\mu_T)\right]. \label{pf:converse:step3}
\end{align}
This will be accomplished by showing essentially that, along any convergent subsequence, the limit of the left-hand side equals the mean field value achieved by some deviating control, so that \eqref{pf:converse:step3} will follow from the MFE optimality condition (6) of Definition \ref{def:weakMFE}.

Introduce $\bm{Y}^n = (Y^{n,1}, \ldots,  Y^{n,n})$, the state processes associated with the controls $(\beta, \alpha^{n,2}, \ldots, \alpha^{n,n})$, governed by the SDEs
\begin{align}
\begin{split}
d Y_t^{n,1} & = b(t, Y_t^{n,1}, \nu_t^n, \beta^n(t, \bm{Y}^n, S))\,dt + \sigma d W_t^1 + \gamma d B_t,  \\
d Y_t^{n,k} & = b(t, Y_t^{n,k}, \nu_t^n, \alpha^*(t, Y^{n,k}_t, S))\,dt + \sigma d W_t^k + \gamma d B_t, \quad k \geq 2,
\end{split} \label{eq:deviating_player_cv_result}
\end{align}
with $\bm{Y}^n_0 = \bm{X}^{n}_0$ and $\nu_t^n = \frac{1}{n} \sum_{k=1}^n \delta_{Y^{n,k}_t}$. 
Because these SDEs are not Markovian, for each $n$ we must enlarge the probability space $(\Omega,\F,\FF,\PP)$ in order to construct them (as in Definition \ref{def:nplayerSDE}), but we abuse notation by keeping the same notation for the probability space.

{\ }

\noindent\textbf{Step 4a.}
We first show that $\nu^n \to \mu$ in probability. To do so, we relate $\bm{X}^n$ and $\bm{Y}^n$ through a change of measure similar to Step 1 of the proof of Proposition \ref{pr:1playerdeviation}.  Define $\QQ^n \ll \PP$ by
\begin{align*}
\frac{d\QQ^n}{d\PP} & = \exp\left( \int_0^T \Xi^{n}_s  \cdot dW^1_s - \frac12 \int_0^T | \Xi^{n}_s  |^2\,ds  \right), \\
\Xi^n_t &:= \sigma^{-1}\left[b(t,X^{n,1}_t,\mu^n_t,\beta^n(t,\bm{X}^{n}, S)) - b(t,X^{n,1}_t,\mu^n_t,\alpha^*(t,X^{n,1}_t, S)) \right].
\end{align*}
Since $(\Xi_t^n)_{t\in [0, T]}$ is bounded, we can apply Girsanov's theorem and conclude that the processes $\bm{\widetilde W}^n = (\widetilde{W}^{1}, W^2, \ldots , W^n)$ are independent Brownian motions under $\QQ^n$ where
\begin{align*}
\widetilde{W}^{1}_t := W^1_t - \int_0^t \Xi^{n}_s\,ds .
\end{align*}
Note that $\bm{X}^n$ satisfies the same SDE system that $\bm{Y}^n$ solves under $\QQ^n$.
Moreover, we claim that $\bm{\widetilde W}^n$ is in fact a $\FF^{\bm{X}^n,S}$-Brownian motion under the conditional measure $\QQ^{n}(\cdot\,|\,S)$, a.s. Indeed, by Definition \ref{def:nplayerSDE}, $\bm{W}^n=(W^1,\ldots,W^n)$ is  an $\FF^{\bm{X}^n,S}$-Brownian motion under the conditional measure $\PP(\cdot\,|\,S)$, a.s. This implies that $\E[ d\QQ^n/d\PP\,|\,S]=1$ a.s., and thus 
\begin{align*}
\frac{d\QQ^n(\cdot \,|\, S) }{d\PP(\cdot \,|\, S)} = \frac{d\QQ^n}{d\PP}, \ \ a.s.
\end{align*}
Applying Girsanov's theorem to the conditional measure yields the claim that $\bm{\widetilde W}^n$ is $\FF^{\bm{X}^n,S}$-Brownian motion under  $\QQ^{n}(\cdot\,|\,S)$, a.s. The uniqueness in Lemma \ref{le:nplayerSDE-lemma} then implies that $\QQ^{n} \circ (\bm{X}^n,S)^{-1} = \PP \circ (\bm{Y}^{n},S)^{-1}$.  

Finally, recall that $\mu^n \to \mu$ in probability under $\PP$ by Step 1, and note that boundedness of $b$ easily implies $\sup_n\E[|d\QQ^n/d\PP|^q] < \infty$ for any $q > 1$.
Thus, for any $\epsilon > 0$ and any compatible metric $d$ on $\CP$, 
\begin{align*}
\PP\big(d(\nu^n, \mu) > \epsilon \big) &= \QQ^n\big( d(\mu^n, \mu)> \epsilon\big) = \E\left[\frac{d\QQ^n}{d\PP} 1_{\{d(\mu^n, \mu)> \epsilon\}}\right]  \to 0,
\end{align*}
and we deduce that $\nu^n\to\mu$ in probability under $\PP$.

{ \ }

\noindent\textbf{Step 4b.}
We next study the sequence $\Upsilon^n := (Y^{n,1}, \beta^n(t, \bm{Y}^n, S), \nu^n, W^1, S=(\mu,B))_{n \in \N}$ of random variables in
\begin{align*}
\widehat\Omega := \C^d \times \V \times \CP \times \C^d \times C([0,T];\S), \quad \S := \P(\R^d) \times \R^d,
\end{align*}
where we identify $\beta^n(t, \bm{Y}^n, S)$ with the relaxed control $dt\delta_{\beta^n(t, \bm{Y}^n, S)}(da)$ (see Section \ref{se:relaxedcontrols}).
The sequence $(\Upsilon^n)_{n\in\N}$ is easily seen to be tight. Indeed, $\V$ is compact, and the marginal of $(W^1,S)$ does not depend on $n$. Moreover, the convergence in probability  $\nu^n\to\mu$ established in Step 4a implies the tightness of $(\nu^n)_{n \in \N}$. Finally, tightness of $(Y^{n,1})_{n \in \N}$ is a straightforward consequence of the boundedness of $b$ and the fact that $Y^{n,1}_0 \sim \lambda$ for each $n$, e.g., by applying Aldous's criterion \cite[Theorem 16.11, Lemma 16.12]{kallenberg-foundations}.
  
Let $\Upsilon = (Y, \Lambda,  \widetilde{\nu}, \widetilde{W}, \widetilde{S}=(\widetilde{\mu},\widetilde{B}))$ any limit point of this tight sequence, and relabel the convergent subsequence. 
It follows from the convergence in probability $\nu^n\to\mu$ in Step 4a that necessarily $\widetilde{\nu}=\widetilde{\mu}$ a.s. Since also the marginal $(W^1,S)$ in $\Upsilon^n$ is the same for each $n$, it follows that $\L(\widetilde{W},\widetilde{S} ) = \L(W,S=(\mu,B))$. We may thus assume without loss of generality that $\Upsilon$ is constructed on the same probability space on which $(\mu,B)$ is defined, and we remove the tildes from the notation $\Upsilon = (Y, \Lambda,  \mu, W, S=(\mu,B))$.
Along this subsequence, we deduce that
\begin{align}
\begin{split}
\lim_n J^n_1(\beta^n,\alpha^{n,2},\ldots,\alpha^{n,n}) & = \lim_n \E\left[\int_0^T f(t, Y_t^{n,1}, \nu_t^n, \beta^{n}(t, \bm{Y}^n, S))\, dt + g (Y_T^{n,1}, \nu_T^n) \right] \\
& = \E\left[\int_0^T\int_A  f(t, Y_t, \widetilde\mu_t, a)\,\Lambda_t(da)\, dt + g (Y_T, \widetilde\mu_T) \right],
\end{split} \label{pf:converse-lim1}
\end{align}
by continuity and boundedness of $f$ and $g$ (and Lemma \ref{le:continuity}).
Moreover, because $W^1$ and $B$ are Brownian motions with respect to the filtration $\FF^{\Upsilon^n}$, it follows easily that the limiting $\widetilde{W}$ and $\widetilde{B}$ are Brownian motions in the filtration $\FF^{\Upsilon}$.
In addition, by using the continuity and boundedness of $b$ and sending $n\to\infty$ in \eqref{eq:deviating_player_cv_result}, we obtain the limiting SDE
\begin{align}
d Y_t = \int_A b(t, Y_t, \widetilde{\mu}_t, a) \,\Lambda_t(da)\,dt + \sigma d\widetilde{W}_t + \gamma d \widetilde{B}_t. \label{pf:converse-SDE-Y}
\end{align}
We finally claim that, by a projection argument, we may construct a process $Z$ and a semi-Markov function $\alpha : [0,T] \times \R^d \times C([0,T];\S) \to A$ such that $Z$ satisfies
\begin{align}
dZ_t = b(t,Z_t,\widetilde{\mu}_t,\alpha(t,Z_t,\widetilde{S}))\,dt + \sigma d\widetilde{W}_t + \gamma d\widetilde{B}_t, \quad Z_0 \sim \lambda, \label{pf:converse-SDE}
\end{align}
as well as
\begin{align}
\begin{split}
\E&\left[\int_0^T\int_A  f(t, Y_t, \widetilde{\mu}_t, a)\,\Lambda_t(da)\, dt + g (Y_T, \widetilde{\mu}_T) \right] \\
&\le \E\left[\int_0^T  f(t, Z_t, \widetilde{\mu}_t, \alpha(t,Z_t,\widetilde{S}))\, dt + g (Z_T, \widetilde{\mu}_T) \right].
\end{split} \label{pf:converse-value}
\end{align}
Once this is justified, we complete the proof as follows: By the mean field optimality condition (6) of Definition \ref{def:weakMFE}, the right-hand side of \eqref{pf:converse-value} is less than or equal to the right-hand side of \eqref{pf:converse:step3}. 
The claimed inequality \eqref{pf:converse:step3} then follows from \eqref{pf:converse-lim1}, along the given convergent subsequence. This argument applies to arbitrary convergent subsequences, and thus \eqref{pf:converse:step3} follows.

{\ }

\noindent\textbf{Step 4c.} It remains to construct  $(Z,\alpha)$ satisfying \eqref{pf:converse-SDE} and \eqref{pf:converse-value} as in the previous paragraph.
The idea, similar to that of Theorem \ref{th:limit-dynamics}, is to ``project away" the extra randomness from $\Lambda_t$ in the SDE \eqref{pf:converse-SDE-Y}, but we will see that the precise implementation is rather involved.
First, apply Ito's formula for $\varphi \in C_c^\infty(\R^d)$ to get
\begin{align}
 \varphi(Y_t) & =  \varphi(Y_0) + \int_0^t\int_A L_{s, \mu_s}\varphi(Y_s, a) \Lambda_s(da) ds + \int_0^t \nabla \varphi(Y_s)^\top \left(\sigma d W_s + \gamma d B_s \right),\label{pf:convSDE1}
\end{align}
where we recall that the generator $L$ is defined in \eqref{def:generator}.
Define $\eta_t := \L(Y_t\,|\,\F^{\mu,B}_T)$.
We would like to take conditional expectations given $\F^{\mu,B}_T=\F^S_T$ on both sides, to obtain an SPDE for $\eta$. To do so, as in \cite[Appendix B]{superposition_theorem} we need to first prove the following compatibility condition:
\begin{align}
\F^{Y, \Lambda}_t \indep \F^{Y_0, W, S}_T \,|\, \F^{Y_0, W, S}_t, \quad \forall t \in [0,T]. \label{eq:compatibility_condition}
\end{align}
We write $\G_1 \indep \G_2 \,|\, \G_3$ to mean that  $\G_1$ and $\G_2$ are conditionally independent given $\G_3$.
To prove \eqref{eq:compatibility_condition}, which is really a property of the law of $\Upsilon=(Y,\Lambda,W,\mu,W,S)$, we will first show the analogous condition for $\Upsilon^n=(Y^{n,1},\beta^n,\nu^n,W^1,S)$ for each $n$, and then we will argue that this property is preserved by weak limits.
Indeed, abbreviating $\beta^n_t = \beta^n(t,\bm{Y}^n,S)$, we first argue that 
\begin{align}
\F^{Y^{n,1}, \beta^n}_t \indep \F^{Y_0^{n,1}, W^1, S}_T \,|\, \F^{Y_0^{n,1}, W^1, S}_t, \quad \forall t \in [0,T], \ n \in \N. \label{eq:compatibility_condition-n}
\end{align}
By the equivalence (1)$\Leftrightarrow$(2) of Lemma \ref{ap:le:technical-compatibility-lemma},  \eqref{eq:compatibility_condition-n} is equivalent to the following two properties holding:
\begin{enumerate}
\item[(a)] $W^1$ is a Brownian motion with respect to the filtration $(\F^S_T \vee \F^{Y^{n,1}, \beta^n,W^1}_t)_{t \in [0,T]}$.
\item[(b)] $\F^{Y^{n,1}, \beta^n,W^1}_t \indep \F^S_T \,|\, \F^S_t$, $\forall t \in [0,T]$.
\end{enumerate}
Since $\sigma$ is non-degenerate and $B$ is $\FF^S$-adapted, we may rearrange the SDE \eqref{eq:deviating_player_cv_result} to find that $W^1$ is adapted to $\FF^{\bm{Y}^n,S}$. Trivially, $\beta^n$ is also adapted to $\FF^{\bm{Y}^n,S}$. 
Hence, (a) and (b) will follow from the stronger properties
\begin{enumerate}
\item[(a')] $W^1$ is a Brownian motion with respect to the filtration $(\F^S_T \vee \F^{\bm{Y}^n,S}_t)_{t \in [0,T]}$.
\item[(b')] $\F^{\bm{Y}^n}_t \indep \F^S_T \,|\, \F^S_t$, $\forall t \in [0,T]$.
\end{enumerate}
Property (a') follows easily from the fact that $W^1$ is a $\FF^{\bm{Y}^n,S}$-Brownian motion under the conditional measure $\PP(\cdot\,|\,S)$, a.s., by Lemma \ref{le:nplayerSDE-lemma}.
By Lemma \ref{ap:le:SDE-uniq}, the unique weak solution of the SDE system \eqref{eq:deviating_player_cv_result} automatically satisfies the property (b').
We have thus shown that \eqref{eq:compatibility_condition-n} holds, and we next send $n\to\infty$ to deduce \eqref{eq:compatibility_condition}. To do so, let $t \in [0,T]$, and let $h_t : \C^d\times \V \to \R$ be bounded, continuous, and time-$t$-measurable in the natural filtration on $\C^d \times \V$. Let $\phi,\psi_t : \R^d \times \C^d \times C([0,T];\mathcal{S}) \to \R$ be bounded and measurable, with $\psi_t$ assumed to be time-$t$-measurable in the natural filtration of $\R^d \times \C^d \times C([0,T];\mathcal{S})$. Note that the law of $(Y^{n,1}_0,W^1,S)$ does not depend on $n$, and thus there exists a bounded measurable function $\widehat{\phi}_t$ such that
\begin{align*}
\widehat{\phi}_t(Y^{n,1}_0,W^1,S) &= \E[\phi(Y^{n,1}_0,W^1,S)\,|\,\F^{Y^{n,1}_0,W^1,S}_t], \ \ a.s., \ \ \forall n \in \N.
\end{align*}
Thus, along the subsequence for which $\Upsilon^n$ converges in law to $\Upsilon$, we have
\begin{align*}
\E[h_t(Y,\Lambda)\phi(Y_0,W,S)\psi_t(Y_0,W,S)] &= \lim_n \E[h_t(Y^{n,1},\beta^n)\phi(Y_0^{n,1},W^1,S)\psi_t(Y_0^{n,1},W^1,S)] \\
	&= \lim_n \E[h_t(Y^{n,1},\beta^n)\widehat{\phi}_t(Y_0^{n,1},W^1,S)\psi_t(Y_0^{n,1},W^1,S)] \\
	&= \E[h_t(Y,\Lambda)\widehat{\phi}_t(Y_0,W,S)\psi_t(Y_0,W,S)] \\
	&= \E[\E[h_t(Y,\Lambda)\,|\,\F^{Y_0,W,S}_t]\widehat{\phi}_t(Y_0,W,S)\psi_t(Y_0,W,S)]
\end{align*}
Indeed, the second identity follows from \eqref{eq:compatibility_condition-n}, and both limits hold despite $(\phi,\widehat{\phi}_t,\psi_t)$ being potentially discontinuous because the marginal law of $(Y_0^{n,1},W^1,S)$ does not depend on $n$; see \cite[Lemma 2.1]{beiglbock2018denseness}. Applying the above with $h_t\equiv 1$, we find
\begin{align*}
\widehat{\phi}_t(Y_0,W,S) &= \E\big[\phi(Y_0,W,S)\,|\,\F^{Y_0,W,S}_t\big], \ \ a.s.
\end{align*}
For general $h_t$, we thus deduce
\begin{align*}
\E[h_t(Y,\Lambda)\phi(Y_0,W,S)\,|\,\F^{Y_0,W,S}_t] =  \E[h_t(Y,\Lambda)\,|\,\F^{Y_0,W,S}_t]\E\big[\phi(Y_0,W,S)\,|\,\F^{Y_0,W,S}_t\big], \ \ a.s.
\end{align*}
This proves \eqref{eq:compatibility_condition}.

Now that we have proven the compatibility condition \eqref{eq:compatibility_condition}, we make use of it as follows.
Recall that $\eta_t := \L(Y_t\,|\,\F^S_T)$ for each $t$. It follows from \eqref{eq:compatibility_condition} and (1)$\Rightarrow$(3) of Lemma \ref{ap:le:technical-compatibility-lemma} that $\F^Y_t \indep \F^S_T\,|\,\F^S_t$ , which implies that $\eta_t = \L(Y_t\,|\,\F^S_t)$.
Moreover, \eqref{eq:compatibility_condition} implies  the identity
\begin{align*}
\E\left[\int_0^t\int_A L_{s, \mu_s}\varphi(Y_s, a) \Lambda_s(da) ds \,\Big|\, \F_T^S \right] = \int_0^t \E\left[ \int_A L_{s, \mu_s}\varphi(Y_s, a) \Lambda_s(da) \,\Big|\, \F_s^S \right] ds,
\end{align*} 
for $0 \le s \le t \le T$,
as well as (since $S=(\mu,B)$ and $W$ are independent)
\begin{align*}
\E\left[\int_0^t \nabla \varphi(Y_s)^\top \left(\sigma d W_s + \gamma d B_s \right)\,\Big|\,\F^S_T\right] = \int_0^t \E[\nabla \varphi(Y_s)^\top \,|\,\F^S_s] \gamma \,dB_s = \int_0^t \langle \eta_s,\,\nabla \varphi \rangle^\top \gamma \,dB_s, 
\end{align*}
via a stochastic Fubini theorem shown in \cite[Lemma B.1]{superposition_theorem}.
For $t \in [0,T]$, we denote by $\widehat{\Lambda}(t,S)$ the conditional expectation  given $\F^S_t$ of the random probability measure $\delta_{Y_t} \times \Lambda_t$ on $\R^d \times A$. That is, $\widehat{\Lambda} : [0,T] \times C([0,T];\S) \to \P(\R^d \times A)$ is a function, which can be taken to be progressively measurable (see \cite[Proposition 5.1]{brunick2013mimicking} or \cite[Lemma C.3]{Lacker_closedloop}), satisfying
\begin{align}
\int_{\R^d \times A} \psi(t, x, \mu_t, a)\widehat{\Lambda}(t,S)(dx,da) = \E\left[ \int_A \psi(t, Y_t, \mu_t, a)\Lambda_t(da) \Big| \F_t^S\right] \label{eq:projecting_away_randomess_converse}
\end{align}
a.s., for each bounded measurable $\psi : [0,T] \times \R^d \times \P(\R^d) \times A\to \R$. Since the $\R^d$-marginal of $\widehat{\Lambda}(t,S)$ equals $\L(Y_t\,|\,\F^S_t)=\eta_t$, we can disintegrate it by
\begin{align}
\widehat{\Lambda}(t,S)(dx,da) = \eta_t(dx)\widehat{\Lambda}'(t,x,S)(da),  \label{eq:desintegration_proj_converse}
\end{align}
for some semi-Markov function $\widehat{\Lambda}' : [0,T] \times \R^d \times C([0,T];\S) \to \P(A)$.
In particular, we find
\begin{align*}
\int_{\R^d}\int_A L_{t,\mu_t}\varphi(x,a)\widehat{\Lambda}'(t, x,S)(da)\eta_t(dx) = \E\left[ \int_{\C^d \times \V} \int_A L_{t, \mu_t}\varphi(Y_t, a)\Lambda_t(da) \,\Big|\, \F_t^S\right].
\end{align*}
Combining the last five equations, we find that taking conditional expectations with respect to $\F^S_T$ in \eqref{pf:convSDE1} yields the following SPDE:
\begin{align}
\langle \eta_t, \varphi \rangle = \langle \lambda, \varphi \rangle + \int_0^t \int_A L_{s, \mu_s} \varphi(x,a) \widehat{\Lambda}'(s, x,S)(da) \,\eta_s(dx)\,ds + \int_0^t \langle \eta_s, \nabla \varphi\rangle ^\top \gamma d B_s, \label{pf:conv-SPDE1}
\end{align}
where we used also the independence of $Y_0$ and $S$ to get $\langle \eta_0,\varphi\rangle = \E[\varphi(Y_0)]=\langle \lambda,\varphi\rangle$ a.s.

We lastly pass from the relaxed control to a strict control. Define semi-Markov functions $(c_1,c_2) : [0,T] \times \R^d \times C([0,T]; \S) \to \R^d \times \R$ by
\begin{align*}
\big(c_1(t,x,S),c_2(t,x,S)\big) &:= \int_A \big(b(t,x,\mu_t,a),f(t,x,\mu_t,a)\big) \widehat{\Lambda}'(t, x,S)(da),
\end{align*}
a.s., for $(t,x) \in [0,T] \times \R^d$, and note that this belongs to the set $K(t,x,\mu_t)$ from Assumption (\ref{assumption:A}.5).
Hence, using a measurable selection argument  \cite[Lemma 3.1]{dufourstockbridge-existence}, we may find a semi-Markov function $\alpha : [0,T] \times \R^d \times C([0,T]; \S) \to A$ such that
\begin{align}
c_1(t,x,S) &= b(t,x,\mu_t,\alpha(t,x,S)), \label{pf:limitdyn-projb1-conv} \\
c_2(t,x,S) &\le f(t,x,\mu_t,\alpha(t,x,S)).  \label{pf:limitdyn-projf1-conv}
\end{align}
Applying \eqref{pf:limitdyn-projb1-conv} in \eqref{pf:conv-SPDE1}, we rewrite \eqref{pf:conv-SPDE1} as
\begin{align}
\langle \eta_t, \varphi \rangle &= \langle \lambda, \varphi \rangle + \int_0^t\int_{\R^d}L_{s,\mu_s}\varphi(x,\alpha(s, x,S))\,\eta_s(dx)\, ds  + \int_0^t \langle \eta_s,\nabla\varphi\rangle^\top \gamma \, dB_s.
\end{align}
We may now apply Theorem \ref{th:superposition} (noting that $\eta$ is $\FF^S$-adapted and that $b$ is bounded) to deduce that the unique (see Lemma \ref{ap:le:SDE-uniq-strong}) strong solution $Z$ of the SDE 
\begin{align*}
d Z_t = b(t, Z_t, \mu_t, \alpha(t, Z_t, S))dt + \sigma d W_t + \gamma d B_t, \quad Z_0 = Y_0
\end{align*}
satisfies $\eta_t = \L(Z_t \,|\, \F_t^S)$ a.s.\ for each $t \in [0,T]$.

Putting it all together, we find
\begin{align*}
\E&\left[\int_0^T\int_A  f(t, Y_t, \mu_t, a)\,\Lambda_t(da)\, dt + g (Y_T, \mu_T) \right] \\
	&= \E\left[\int_0^T\int_{\R^d}\int_A  f(t, Y_t, \mu_t, a)\,\widehat{\Lambda}'(t,x,S)(da)\,\eta_t(dx)\, dt + g (Y_T, \mu_T) \right] \\
	&= \E\left[\int_0^T\int_{\R^d}c_2(t,x,S)\,\eta_t(dx)\, dt + g (Y_T, \mu_T) \right] \\
	&\le \E\left[\int_0^T\int_{\R^d}f(t,x,\mu_t,\alpha(t,x,S))\,\eta_t(dx)\, dt + \langle \eta_T, g (\cdot, \mu_T)\rangle \right] \\
	&= \E\left[\int_0^T f(t,Z_t,\mu_t,\alpha(t,Z_t,S))\, dt + g (Z_T, \mu_T) \right]
\end{align*}
where we used, in order, the identities \eqref{eq:projecting_away_randomess_converse} and \eqref{eq:desintegration_proj_converse} combined with Fubini's theorem and \eqref{eq:desintegration_proj_converse}, the definition of $c_2$, the identity \eqref{pf:limitdyn-projf1-conv} along with the definition of $\eta_T$, and finally  Fubini's theorem along with the identity $\eta_t = \L(Z_t \,|\, \F_t^S)$ a.s.\ for each $t$.  
This finally justifies \eqref{pf:converse-value}, and the proof of the theorem is complete. \hfill \qedsymbol

\section{Connections with other mean field equilibrium concepts} \label{se:connectionsWeakMFE}

In this section we connect our notion of weak semi-Markov MFE with the notion of \emph{weak MFG solution} introduced in \cite{carmona-delarue-lacker}. We paraphrase the definition of the latter here. First, recall the space $\V$ of relaxed controls defined in Section \ref{se:relaxedcontrols}. Let $\X=\C^d \times \V \times \C^d$, equipped with the filtration $\FF^\X=(\F^\X_t)_{t \in [0,T]}$ defined by letting $\F^\X_t$ be generated by the maps $(w,q,x) \mapsto (w_s,q(C),x_s)$ where $s \le t$ and $C$ is a Borel set of $[0,t] \times A$. For $\bm{\widetilde m} \in \P^p(\X)$, define $\widetilde m^x \in C([0,T];\P^p(\R^d))$ by setting $\widetilde m_t^x = \bm{\widetilde m}\circ[(w,q,x)\mapsto x_t]^{-1}$ for $t \in [0,T]$.

\begin{definition} \label{def:weakMFGsolution}
A weak MFG solution is a tuple $(\Omega, \FF, \PP, B, W, \widetilde{\bm\mu}, \Lambda, X)$ where $(\Omega, \FF, \PP)$ is a filtered probability space supporting $(B, W, \bm{\widetilde\mu}, \Lambda, X)$ satisfying: 
\begin{enumerate}[(1)]
    \item The processes $B=(B_t)_{t \in [0,T]}$ and $W=(W_t)_{t \in [0,T]}$ are independent $\FF$-Brownian motions of dimension $d$, and the process $X=(X_t)_{t \in [0,T]}$ is $\FF$-adapted with values in $\R^d$ and with $\PP \circ X_0^{-1} = \lambda$. Moreover, $\widetilde{\bm\mu}$ is a random element of $\P^p(\X)$ such that $\widetilde{\bm\mu}(C)$ is $\F_t$-measurable for each $C \in  \F_t^\X$ and $t \in [0,T]$.
    \item $X_0$, $W$, and $(B, \bm{\widetilde\mu})$ are independent.
    \item The process $\Lambda=(\Lambda_t)_{t\in[0, T]}$ is $\FF$-progressively measurable with values in $\P(A)$. Moreover, $\F^\Lambda_t$ is conditionally independent of $\F_T^{X_0, B, W, \bm{\widetilde\mu}}$ given $\F_t^{X_0, B, W, \bm{\widetilde\mu}}$ for each $t \in [0,T]$. 
    \item The state equation holds:
    \begin{align}
        dX_t = \int_A b(t, X_t, \widetilde\mu_t^x, a)\Lambda_t(da)dt + \sigma dW_t + \gamma dB_t.
        \label{def:state_equation_open_loop}
    \end{align}
    \item  If $(\Omega', \FF', \PP')$ is another filtered probability space supporting $(B', W', \widetilde{\boldsymbol{\mu}}',\Lambda', X')$ satisfying (1--4) and $\PP \circ (X_0, B, W, \widetilde{\boldsymbol{\mu}})^{-1} = \PP' \circ (X_0', B', W', \widetilde{\boldsymbol{\mu}}')^{-1}$, then
    \begin{align*}
        &\E\left[\int_0^T \int_A f(t, X_t, \widetilde{\mu}_t^x, a)\Lambda_t(da)dt + g(X_T, \widetilde{\mu}_T^x) \right] \\
        &\qquad \geq \E\left[\int_0^T  \int_A f(t, X_t', \widetilde{\mu}'_t { }^{x}, a)\Lambda_t'(da)dt + g(X_T', \widetilde{\mu}'_T \!\!{ }^{x}) \right]
    \end{align*}
    \item  $\widetilde{\boldsymbol{\mu}} = \L((W,\Lambda, X)\,|\, B, \widetilde{\bm\mu})$ a.s. 
\end{enumerate}
\end{definition}

 The following theorem shows that our weak semi-Markov MFE in Definition  \ref{def:weakMFE} is equivalent to the weak MFG solutions in a distributional sense.
 
\begin{theorem} \label{th:weaksemiMarkov-to-weakMFGsolution}
Suppose Assumption \ref{assumption:A} holds.
Suppose $(\Omega,\F,\FF,\PP ,W , B, \alpha^*,\mu,X)$ is a weak semi-Markov MFE. Define the $\P(A)$-valued process $\Lambda_t = \delta_{\alpha^*(t,X_t,\mu, B)}$ and $\bm{\widetilde\mu} = \L((W,\Lambda,X) \, | \, \mu, B )$. Then $(\Omega,\FF,\PP,B, W,\bm{\widetilde\mu},\Lambda,X)$ is a weak MFG solution. Conversely, if $(\widetilde \Omega, \widetilde \FF, \widetilde \PP, \widetilde B, \widetilde W, \widetilde{\bm\mu}, \widetilde \Lambda, \widetilde X)$ is a weak MFG solution, then there exists a weak semi-Markov MFE $(\Omega,\F,\FF,\PP , W , B, \alpha^*,\mu, X)$ such that $\L(\mu,B) = \L(\widetilde{\mu}^x,\widetilde B)$. 
\end{theorem}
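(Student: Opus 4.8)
The plan is to prove the two implications by direct verification, the main ingredients being the superposition principle (Theorem~\ref{th:superposition}), a measurable selection based on the convexity assumption (\ref{assumption:A}.5), and careful bookkeeping of conditional independence. \emph{From a weak semi-Markov MFE to a weak MFG solution:} given $(\Omega,\F,\FF,\PP,W,B,\alpha^*,\mu,X)$, I would set $\Lambda_t:=\delta_{\alpha^*(t,X_t,\mu,B)}$ and take $\widetilde{\bm\mu}:=\L((W,\Lambda,X)\,|\,\mu,B)$, realized as a non-anticipating measurable function of $(\mu,B)$, and then check (1)--(6) of Definition~\ref{def:weakMFGsolution} one at a time. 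Properties (1)--(4) are essentially bookkeeping: the moment bound \eqref{pf:moment_property_mu_from_def} gives $\widetilde{\bm\mu}\in\P^p(\X)$; since $X$ is a strong solution, the remark following Definition~\ref{def:weakMFE} shows $\F^X_t$ (hence $\F^{W,\Lambda,X}_t$) is conditionally independent of $\F^{\mu,B}_T$ given $\F^{\mu,B}_t$, which makes $\widetilde{\bm\mu}(C)$ $\F^{\mu,B}_t$-measurable for $C\in\F^\X_t$ and yields compatibility (3); independence (2) is immediate since $\widetilde{\bm\mu}$ is $(\mu,B)$-measurable; and the state equation (4) is \eqref{def:weakMFE-SDE} once one notes $\widetilde\mu^x_t=\L(X_t\,|\,\F^{\mu,B}_t)=\mu_t$. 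Property (6) follows from the same conditional independence. The one substantive point is the optimality (5): given a competitor $(\Omega',\dots,\Lambda',X')$ with $\L(X_0',B',W',\widetilde{\bm\mu}')=\L(X_0,B,W,\widetilde{\bm\mu})$, the law constraint forces $\widetilde{\bm\mu}'$ to be the \emph{same} non-anticipating function of $(\widetilde\mu'^x,B')$, so $\FF^{\widetilde{\bm\mu}',B'}=\FF^{\widetilde\mu'^x,B'}$; one then projects $\Lambda'$ onto $\FF^{\widetilde\mu'^x,B'}$ (take the conditional law of $\delta_{X'_t}\times\Lambda'_t$, disintegrate over $\L(X'_t\,|\,\F^{\widetilde\mu'^x,B'}_t)$, and select a strict semi-Markov control via (\ref{assumption:A}.5)), realizes the resulting conditional-law flow using Theorem~\ref{th:superposition}, and obtains a semi-Markov deviation whose value dominates that of $\Lambda'$. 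Since this value depends only on $\L(\widetilde\mu'^x,B')=\L(\mu,B)$, the MFE optimality (6) closes the argument.

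For the converse, given a weak MFG solution $(\widetilde\Omega,\widetilde\FF,\widetilde\PP,\widetilde B,\widetilde W,\widetilde{\bm\mu},\widetilde\Lambda,\widetilde X)$, I would take $\mu:=\widetilde\mu^x$ and $B:=\widetilde B$, so that $\L(\mu,B)$ is automatically the required one. Using (2)--(3) one first obtains the compatibility $\F^{\widetilde X,\widetilde\Lambda}_t\indep\F^{\widetilde{\bm\mu},B}_T\,|\,\F^{\widetilde{\bm\mu},B}_t$, whence (6) gives $\mu_t=\L(\widetilde X_t\,|\,\F^{\widetilde{\bm\mu},B}_t)$, and a tower-property step upgrades this to $\mu_t=\L(\widetilde X_t\,|\,\F^{\mu,B}_t)$. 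Applying It\^o's formula to $\varphi(\widetilde X_t)$ and conditioning on $\F^{\mu,B}_T$ --- using the stochastic Fubini theorem of \cite[Lemma~B.1]{superposition_theorem} for the martingale part and the compatibility for the drift --- yields a Fokker--Planck SPDE for $\mu$,
\begin{align*}
\langle\mu_t,\varphi\rangle=\langle\lambda,\varphi\rangle+\int_0^t\!\!\int_{\R^d}\!\!\int_A L_{s,\mu_s}\varphi(x,a)\,\widehat\Lambda'(s,x,\mu,B)(da)\,\mu_s(dx)\,ds+\int_0^t\langle\mu_s,\nabla\varphi\rangle^\top\gamma\,dB_s,
\end{align*}
where $\widehat\Lambda'$ disintegrates over $\mu_t$ the $\F^{\mu,B}_t$-conditional law of $\delta_{\widetilde X_t}\times\widetilde\Lambda_t$. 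A measurable selection (as in \cite[Lemma~3.1]{dufourstockbridge-existence}) together with (\ref{assumption:A}.5) replaces $\widehat\Lambda'$ by a semi-Markov strict control $\alpha^*$ that matches the drift $b$ and dominates the integrand of $f$, and Theorem~\ref{th:superposition} then produces, on an enlarged space, a Brownian motion $W$ and the unique strong solution $X^*$ of \eqref{def:weakMFE-SDE} with $\mu_t=\L(X^*_t\,|\,\F^{\mu,B}_t)$; this gives properties (1)--(5) of Definition~\ref{def:weakMFE}. For the optimality (6), any semi-Markov $\alpha$ on the MFE space gives a state process $X[\alpha]$ adapted to the completion of $\FF^{X_0^*,W,\mu,B}$, hence a relaxed control $\delta_{\alpha(\cdot,X[\alpha],\mu,B)}$ satisfying compatibility condition (3) of Definition~\ref{def:weakMFGsolution} relative to the frozen $\widetilde{\bm\mu}$; since (5) compares precisely against such competitors and the $f$-part of the selection forces $J(\alpha^*)$ to be at least the weak-MFG equilibrium value of $\widetilde\Lambda$, one concludes $J(\alpha)\le J(\alpha^*)$.

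The delicate point --- and the main obstacle --- is in the converse: justifying the conditioning onto the filtration generated by $(\mu,B)=(\widetilde\mu^x,B)$ rather than the a priori larger filtration generated by $(\widetilde{\bm\mu},B)$. The path-space measure $\widetilde{\bm\mu}$ may carry information about $(\widetilde W,\widetilde\Lambda,\widetilde X)$ not recoverable from its marginal flow $\widetilde\mu^x$, so neither the consistency $\mu_t=\L(\widetilde X_t\,|\,\F^{\mu,B}_t)$ nor the $\F^{\mu,B}_t$-measurability of the projected kernel $\widehat\Lambda'$ is immediate. Resolving this requires combining the fixed-point condition (6) of Definition~\ref{def:weakMFGsolution} with compatibility (3) --- in effect, reducing any weak MFG solution to one whose $\widetilde{\bm\mu}$ is a non-anticipating function of $(\widetilde\mu^x,B)$ --- and this reduction, carried out via the superposition principle, is the technical heart of the proof; everything else is routine measure-theoretic verification and the by-now standard relaxed-control/selection machinery already used in Sections~\ref{se:mainlimitproof} and \ref{se:constructing_nash_eq}.
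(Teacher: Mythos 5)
Your forward direction is essentially correct and follows the intended route (the paper itself proves this theorem by deferring to \cite[Theorems 6.2 \& 6.3]{Lacker_closedloop} with the filtration $\FF^{B,\bm{\widetilde\mu}}$ in place of $\FF^{\bm{\widetilde\mu}}$); in particular, your observation that the law constraint in Definition \ref{def:weakMFGsolution}(5) forces $\FF^{B',\widetilde{\bm\mu}'}=\FF^{\widetilde\mu'^x,B'}$ for any competitor is exactly what legitimizes projecting $\Lambda'$ there. The genuine gap is in the converse, and it is precisely the point you flag but do not close. To condition It\^o's formula on $\F^{\mu,B}_T$ with $\mu=\widetilde\mu^x$ and obtain a drift of the form $\int_0^t\E[\,\cdot\,|\,\F^{\mu,B}_s]\,ds$, you need the compatibility $\F^{\widetilde X,\widetilde\Lambda}_t\indep\F^{\mu,B}_T\,|\,\F^{\mu,B}_t$. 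Definition \ref{def:weakMFGsolution}(3) gives compatibility only with respect to $\FF^{X_0,B,W,\widetilde{\bm\mu}}$; using the independence in (2) one can integrate out $(X_0,W)$ and descend to $\FF^{B,\widetilde{\bm\mu}}$, but one cannot descend further to $\FF^{\widetilde\mu^x,B}$, since $\F^{\widetilde{\bm\mu}}_t$ may strictly contain $\F^{\widetilde\mu^x,B}_t$ and may be predictive of the future of $\widetilde\mu^x$. Your proposed repair --- first reducing to a weak MFG solution whose $\widetilde{\bm\mu}$ is a non-anticipating function of $(\widetilde\mu^x,B)$ --- is itself unproven, and is neither needed nor the route the paper takes.

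The correct mechanism is the two-step projection already used in Theorem \ref{th:limit-dynamics}. First condition on $\F^{B,\widetilde{\bm\mu}}_T$, where the descended compatibility \emph{is} available; this yields an SPDE for $\mu_t=\L(\widetilde X_t\,|\,\F^{B,\widetilde{\bm\mu}}_t)=\widetilde\mu^x_t$ whose drift $a_t$ is only $\F^{B,\widetilde{\bm\mu}}_t$-measurable. Then apply Lemma \ref{le:conditional-exp-derivatives} with $Y=\mu$, $h(m)=\langle m,\varphi\rangle$, $g(m)=\gamma^\top\langle m,\nabla\varphi\rangle$ to replace $a_t$ by $\E[a_t\,|\,\F^{\mu,B}_t]$. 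That lemma requires \emph{no} compatibility: it only uses that the difference of the two drifts integrates to a continuous, finite-variation $\FF^{\mu,B}$-martingale started at zero, hence vanishes. The resulting kernel is then genuinely semi-Markov in $(t,x,\mu,B)$, and your selection and superposition steps go through as written. One smaller omission: to invoke Definition \ref{def:weakMFGsolution}(5) for the optimality of the constructed $\alpha^*$, the enlarged space produced by Theorem \ref{th:superposition} must also carry a copy of the full $\widetilde{\bm\mu}$, jointly distributed with $(\mu,B)$ as in the original solution and independent of $(X_0,W)$, so that each semi-Markov deviation defines an admissible competitor; this is routine but should be stated.
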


The proof of this theorem is no different from the case without common noise treated in \cite[Theorem 6.2 \& 6.3]{Lacker_closedloop}. The only changes are that the filtration $\FF^{B,\bm{\widetilde \mu}}$ should replace the one denoted $\FF^{\bm{\widetilde\mu}}$ therein, one should condition also on $B$ whenever one conditions on $\bm{\widetilde \mu}$ or $\widetilde{\mu}^x$, and one should change variables to $Y := \sigma^{-1}\left(X - \gamma B\right)$ to check well-posedness of the relevant SDEs.

Using the correspondence of \ref{th:weaksemiMarkov-to-weakMFGsolution}, we immediately deduce the existence part of Theorem \ref{th:existence} from the corresponding existence theorem for weak MFG solutions \cite[Theorem 3.2]{carmona-delarue-lacker}, and we similarly deduce the uniqueness claim from \cite[Theorem 6.2 and Proposition 4.4]{carmona-delarue-lacker}.
Alternatively, one could prove such a uniqueness result directly, by adapting \cite[Proof of Theorem 2.9]{Lacker_closedloop}.

\begin{appendix}

\section{SDEs with random coefficients} \label{ap:SDE}

This section extends some of the results of \cite[Appendix A]{Lacker_closedloop}, regarding SDEs with random coefficients, to the unbounded and path-dependent case. This  justifies some well-posedness claims for the SDEs arising in the paper.
The arguments given here are somewhat simpler and more self-contained than those of \cite{Lacker_closedloop}.

Throughout the section, let $(E,d_E)$ be a complete separable metric space, equipped with a distinguished point $\circ \in E$. We are mainly interested in the cases $E=\mathcal{S}$, for the $n$-player game, and $E=\R^d \times \P^p(\R^d)$ with $\circ = (0,\delta_0)$, for the MFG. 
Equip the space $\CE$ with the metric
\[
(e,e') \mapsto \sup_{t \in [0,T]}d_E(e_t,e'_t).
\]
Fix $d \in \N$, an initial distribution $\lambda \in \P(\R^d)$ and a progressively measurable function $b : [0,T] \times \C^d \times \CE \rightarrow \R^d$. Assume there exists $C < \infty$ such that, for all $(t,x,e)$,
\begin{align}
|b(t, x, e)| \leq C\bigg(1 + \|x\|_t + \sup_{s \in [0,t]}d_E(e_s,\circ)\bigg), \label{ap:growthassump-path}
\end{align}
where we recall that $\|x\|_t=\sup_{s \in [0,t]}|x_s|$.
The following lemmas spell out the precise nature of existence, uniqueness, and stability for SDEs with random coefficients of the form
\begin{align}
dX_t = b(t,X,\eta)\,dt + dW_t, \qquad X_0 \sim \lambda, \label{ap:eq:SDEform}
\end{align}
where $\eta$, $W$, and $X_0$ are independent.
The results of this section apply to the SDEs encountered in the paper, after a simple change of variables, illustrated in the following remarks.

\begin{remark} \label{re:ap:SDEs-MFG}
Suppose $b$ is as in Assumption \ref{assumption:A}, and consider the SDE
\begin{align*}
dX_t = b(t,X_t,\mu_t,\alpha(t,X_t,\mu,B))dt + \sigma dW_t + \gamma dB_t,
\end{align*}
for some given semi-Markov function $\alpha : [0,T] \times \R^d \times C([0,T];\P^p(\R^d) \times \R^d) \to A$, as encountered in Definition \ref{def:weakMFE}.
Here $\mu$ is a $\P^p(\R^d)$-valued process with $X_0$, $W$, and $(\mu,B)$ independent. Define $Y_t := \sigma^{-1}(X_t-\gamma B_t)$, and for $(t,y,(m,w)) \in [0,T] \times \R^d \times C([0,T];\P^p(\R^d) \times \R^d)$ define
\begin{align*}
\overline{b}(t,y,(m,w)) := \sigma^{-1} b(t,\sigma y + \gamma w_t,m_t,\alpha(t,\sigma y + \gamma w_t,m,w)).
\end{align*}
Then, with $\eta=(\mu,B)$, we have
\begin{align*}
dY_t = \overline{b}(t,Y_t,\eta)dt + dW_t,
\end{align*}
which fits the form \eqref{ap:eq:SDEform} of SDE covered in this section. Because this SDE is Markovian in $X$, we will see in Lemma \ref{ap:le:SDE-uniq-strong} that it has a unique strong solution.
\end{remark}

\begin{remark} \label{re:ap:SDEs-nplayer}
Suppose $b$ is as in Assumption \ref{assumption:A}, and consider the SDE \eqref{def:nplayer-eq}. A similar transformation puts this SDE in the form \eqref{ap:eq:SDEform} covered in this section. Indeed, let $Y^{n,i}_t := \sigma^{-1}(X^{n,i}_t-\gamma B_t)$, and note that $B$ is adapted to $S$, so there exists a Borel function $\phi : C([0,T];\mathcal{S}) \to \C^d$ such that $B=\phi(S)$ a.s. Rewriting the SDE \eqref{def:nplayer-eq} in terms of $\bm Y^n=(Y^{n,1},\ldots,Y^{n,n})$, we see that it takes the form \eqref{ap:eq:SDEform} with $\eta=S$. In general, because the controls in \eqref{def:nplayer-eq} may be path dependent, we will only have weak solutions in this case.
\end{remark}

In some of the following proofs, we will make use of the measure $\W_\lambda \in \P(\C^d)$ defined as the law of a Brownian motion started from initial law $\lambda$.

\begin{lemma}[Deterministic case] \label{ap:le:SDE-determ}
For $e \in \CE$, the SDE
\begin{align}
dX^e_t = b(t,X^e,e)dt + dW_t, \quad\quad X^e_0\sim \lambda  \label{ap:eq:SDE-determ}
\end{align}
has a unique solution in law, and the law is denoted $P^e \in \P(\C^d)$. Moreover, $\CE \ni e \mapsto P^e \in \P(\C^d)$ is Borel measurable, and it is non-anticipative in the following sense: Letting $R_t(x) := x|_{[0,t]}$ denote the restriction map, it holds that $P^e \circ R_t^{-1} = P^{e'} \circ R_t^{-1}$ for every $t \in [0,T]$ and $e,e' \in \CE$ satisfying $e_s=e'_s$ for all $s \in[0,t]$.
\end{lemma}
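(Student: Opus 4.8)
\textbf{Plan for the proof of Lemma \ref{ap:le:SDE-determ}.}
The statement concerns SDEs with additive, non-degenerate noise (here the diffusion coefficient is the identity after the change of variables), a bounded-growth but otherwise only measurable drift, and a \emph{deterministic} path $e \in \CE$ playing the role of a frozen input. The natural tool is Girsanov's theorem: since the drift $b(t,\cdot,e)$ grows at most linearly in $\|x\|_t$ uniformly in $t$ (by \eqref{ap:growthassump-path} with $e$ fixed, $\sup_{s \le t} d_E(e_s,\circ)$ is a finite constant depending on $e$ and $t$), the Novikov-type criteria apply. Concretely, I would first establish existence: on the canonical space $\C^d$ equipped with $\W_\lambda$ and the coordinate process $X$ (a Brownian motion with initial law $\lambda$), define the exponential local martingale
\begin{align*}
\mathcal{E}^e_t := \exp\left(\int_0^t b(s,X,e)^\top dX_s - \frac12\int_0^t |b(s,X,e)|^2\,ds\right),
\end{align*}
check it is a true martingale on $[0,T]$ — e.g., via the linear-growth criterion of \cite[Theorem 7.7 or 7.19]{liptser2001statistics}, exactly as is invoked elsewhere in the paper (Step 1 of the proof of Proposition \ref{pr:1playerdeviation}) — and set $P^e := \mathcal{E}^e_T\,d\W_\lambda$. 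Girsanov's theorem then shows $P^e$ solves \eqref{ap:eq:SDE-determ}.

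For uniqueness in law: any weak solution $(X,W)$ of \eqref{ap:eq:SDE-determ} on some probability space, by the same linear-growth bound and Gronwall (giving $\E\|X\|_T^2 < \infty$), has the property that the Girsanov density removing the drift is a true martingale; hence the law of $X$ pushed forward under the removal of the drift is $\W_\lambda$, and reversing the argument pins down $\L(X) = P^e$. This is the standard ``uniqueness in law via Girsanov" argument; the only point needing care is the a priori moment bound ensuring the candidate density is a genuine martingale, which follows from \eqref{ap:growthassump-path} and Gronwall applied to $\E\|X\|_t^2$.

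For measurability of $e \mapsto P^e$: the map $e \mapsto (s,x) \mapsto b(s,x,e)$ is jointly measurable (as $b$ is progressively, hence Borel, measurable), so $e \mapsto \mathcal{E}^e_T(x)$ is jointly measurable in $(e,x)$, and thus $e \mapsto \langle P^e,\varphi\rangle = \int \varphi(x)\mathcal{E}^e_T(x)\,\W_\lambda(dx)$ is measurable for bounded continuous $\varphi$ by Fubini; since $\P(\C^d)$ carries the Borel $\sigma$-field of weak convergence generated by such integrals, $e \mapsto P^e$ is Borel. Finally, the non-anticipativity claim: if $e_s = e'_s$ for all $s \le t$, then progressive measurability of $b$ gives $b(s,x,e) = b(s,x,e')$ for all $s \le t$ and all $x$, so $\mathcal{E}^e_s = \mathcal{E}^{e'}_s$ for $s \le t$; applying the optional-projection/martingale-density identity, for any bounded $R_t$-measurable $\Phi$ we get $\E_{P^e}[\Phi] = \E_{\W_\lambda}[\Phi\,\mathcal{E}^e_T] = \E_{\W_\lambda}[\Phi\,\mathcal{E}^e_t] = \E_{\W_\lambda}[\Phi\,\mathcal{E}^{e'}_t] = \E_{P^{e'}}[\Phi]$, which is exactly $P^e \circ R_t^{-1} = P^{e'} \circ R_t^{-1}$.

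I expect the main (though still routine) obstacle to be the verification that the exponential martingale $\mathcal{E}^e$ is a true martingale rather than merely a local one, uniformly enough to run the argument — this requires combining the linear growth \eqref{ap:growthassump-path} with a Gronwall estimate on the second moment of $\|X\|_t$ under $\W_\lambda$ (or under the candidate solution law). Everything else is bookkeeping: measurability is a Fubini argument and non-anticipativity is immediate from progressive measurability of $b$ once the density representation is in hand.
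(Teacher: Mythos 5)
Your proposal is correct and follows essentially the same route as the paper: Girsanov's theorem (justified under linear growth via \cite[Theorem 7.7]{liptser2001statistics}) for existence and uniqueness in law, the explicit density $dP^e/d\W_\lambda$ for measurability, and non-anticipativity from progressive measurability of $b$. The only point you pass over a little quickly is that the stochastic integral $\int_0^T b(s,w,e)\cdot dw_s$ is defined only up to $\W_\lambda$-null sets for each fixed $e$, so joint measurability of $(w,e)\mapsto \mathcal{E}^e_T(w)$ requires constructing a jointly measurable version (the paper does this with a monotone class argument) before Fubini applies.
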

\begin{proof}
Using \eqref{ap:growthassump-path}, the existence and uniqueness in law is a standard consequence of Girsanov's theorem. See \cite[Theorem 3.5.16]{Karatzas} for existence, and uniqueness is a quick consequence of \cite[Theorem 7.7]{liptser2001statistics}. 
The claimed non-anticipativity follows from the uniqueness and from the assumed progressive measurability of $b$.
The Radon-Nikodym derivative is easily identified as
\begin{align*}
\frac{dP^e}{d\W_\lambda}(w) &= \exp\left(\int_0^T b(t,w,e) \cdot dw_t - \frac12\int_0^T|b(t,w,e)|^2\,dt\right).
\end{align*}
Note that $\int_0^T|b(t,w,e)|^2\,dt$ is Borel measurable in $(w,e)$. By a straightforward monotone class argument, the stochastic integral $\int_0^T b(t,w,e) \cdot dw_t$ can be shown to admit a jointly measurable version, in the sense that there exists a Borel function of $(w,e)$ which agrees $\W_\lambda$-a.s.\ with $\int_0^T b(t,w,e) \cdot dw_t$ for each fixed $e \in \CE$; cf.\ \cite[Theorem 63]{protter2005stochastic}. Hence, we may find a Borel function $F : \C^d \times \CE \to \R$ such that $dP^e/d\W_\lambda =F(\cdot,e)$, $\W_\lambda$-a.s.\ for each $e$. Thus $\langle P^e,h\rangle = \langle \W_\lambda, F(\cdot,e)h\rangle$ is a measurable function of $e$ for each bounded continuous $h$, which implies the claim.
\end{proof}

\begin{lemma}[Existence] \label{ap:le:SDE-exist}
Let $M \in \P(\CE)$. Let $(\eta,X)$ be a $\CE \times \C^d$-valued random variable defined on some probability space $(\Omega,\F,\PP)$, with joint law $M(de)P^e(dx)$ where $P^e$ is the law of the solution of the deterministic SDE \eqref{ap:eq:SDE-determ}.
Define 
\begin{align*}
W_t := X_t - X_0 - \int_0^t b(s,X,\eta)\,ds.
\end{align*}
Then $W$ is an $\FF^{\eta,X}$-Brownian motion. Moreover, $W$ is an $\FF^{\eta,X}$-Brownian motion under the conditional measure $\PP(\cdot\,|\,\eta=e)$, for $M$-a.e.\ $e \in \CE$.
\end{lemma}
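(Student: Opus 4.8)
The plan is to prove Lemma \ref{ap:le:SDE-exist} by reducing the random-coefficient SDE to the deterministic case handled in Lemma \ref{ap:le:SDE-determ} via conditioning on $\eta$. Since $(\eta,X)$ has law $M(de)P^e(dx)$, the regular conditional distribution of $X$ given $\eta=e$ is exactly $P^e$ for $M$-a.e.\ $e$. The key observation is that under $\PP(\cdot\,|\,\eta=e)$, the coefficient $b(\cdot,\cdot,e)$ is \emph{deterministic} (the second argument being frozen), so the process $W^e_t := X_t - X_0 - \int_0^t b(s,X,e)\,ds$ is, by the very definition of $P^e$ as the law of a solution to \eqref{ap:eq:SDE-determ}, an $\FF^X$-Brownian motion under $\PP(\cdot\,|\,\eta=e)$. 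Because $b$ is progressively measurable, on the event $\{\eta=e\}$ we have $\int_0^t b(s,X,\eta)\,ds = \int_0^t b(s,X,e)\,ds$, so $W$ and $W^e$ agree $\PP(\cdot\,|\,\eta=e)$-a.s.; hence $W$ is an $\FF^X$-Brownian motion under $\PP(\cdot\,|\,\eta=e)$ for $M$-a.e.\ $e$. This is the second (and stronger) assertion, and the first follows from it by integrating over $e$.

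The steps I would carry out, in order: (1) Invoke the structure of the law $M(de)P^e(dx)$ to assert that $e\mapsto P^e$ is a version of the conditional law $\L(X\,|\,\eta=e)$, using measurability of $e\mapsto P^e$ from Lemma \ref{ap:le:SDE-determ}. (2) Fix $e$ in the full-measure set where $\L(X\,|\,\eta=e)=P^e$; by definition of $P^e$ as the law of the unique solution of \eqref{ap:eq:SDE-determ}, there is \emph{some} probability space carrying a solution $\widetilde X\sim P^e$ with its driving Brownian motion $\widetilde W$; since the map $x\mapsto (x_t - x_0 - \int_0^t b(s,x,e)\,ds)_{t\in[0,T]}$ is a fixed measurable functional on $\C^d$ (measurability of the drift integral follows from progressive measurability and the growth bound \eqref{ap:growthassump-path}, so the integral is finite and the functional well-defined), the law of $(\widetilde X,\widetilde W)$ equals the pushforward of $P^e$ under $x\mapsto(x,\,x-x_0-\int_0^\cdot b(s,x,e)\,ds)$. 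Hence the corresponding process built from $X$ on our space is an $\FF^X$-Brownian motion under $\PP(\cdot\,|\,\eta=e)$. (3) Replace $b(s,X,e)$ by $b(s,X,\eta)$ under $\PP(\cdot\,|\,\eta=e)$, using that these agree a.s.\ on $\{\eta=e\}$; conclude $W$ is a Brownian motion in the filtration $\FF^{\eta,X}$ under the conditional measure (enlarging from $\FF^X$ to $\FF^{\eta,X}$ is harmless since $\eta$ is fixed under the conditional measure). (4) Integrate: for the unconditional claim, note that for any $0\le s<t$, bounded $\F^{\eta,X}_s$-measurable $Z$, and bounded measurable $h$, $\E[h(W_t-W_s)Z] = \int \E[h(W_t-W_s)Z\,|\,\eta=e]\,M(de) = \E[h(W_t-W_s)]\,\E[Z]$ by step (3) and the independence/Brownian increment property under each conditional measure, which gives that $W$ has independent Gaussian increments relative to $\FF^{\eta,X}$, i.e.\ is an $\FF^{\eta,X}$-Brownian motion.

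The main obstacle, and the place requiring genuine care, is the measurability bookkeeping in step (2): one must ensure that ``$W^e_t := X_t - X_0 - \int_0^t b(s,X,e)\,ds$ is a $P^e$-Brownian motion'' is not merely a statement about the abstract solution used to define $P^e$, but transfers to \emph{our} process $X$ on $(\Omega,\F,\PP)$ conditioned on $\eta=e$. This works because being a Brownian motion (in a given filtration) is a property of the \emph{joint law} of $(X, W^e)$, and that joint law is determined by $P^e$ together with the fixed measurable drift functional — so it is the same on any space where $X\sim P^e$. The growth bound \eqref{ap:growthassump-path} is needed precisely to guarantee $\int_0^T|b(s,x,e)|\,ds<\infty$ for $P^e$-a.e.\ $x$ (in fact one already knows $P^e\ll\W_\lambda$ with a Girsanov density, giving $\int_0^T|b(s,x,e)|^2ds<\infty$ $P^e$-a.s.), so the functional $x\mapsto W^e(x)$ is well-defined $P^e$-a.s. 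A minor additional point is the null-set management: the ``$M$-a.e.\ $e$'' in the conclusion absorbs (i) the null set where $\L(X\,|\,\eta=e)\ne P^e$ and (ii) for each such good $e$, the $P^e$-null set where the drift integral diverges; a countable-generation / Fubini argument over the increments and a countable dense family of times $s<t$ keeps everything within a single $M$-null exceptional set.
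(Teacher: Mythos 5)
Your proposal is correct and follows essentially the same route as the paper's proof: condition on $\eta=e$, use that $\L(X\,|\,\eta=e)=P^e$ so that $W$ coincides with the driving Brownian motion of the deterministic-coefficient SDE under the conditional measure, and then integrate over $e$ (via the factorization of $\E[h(W_t-W_s)Z]$ for $\F^{\eta,X}_s$-measurable $Z$) to get the unconditional statement. The extra care you take in transferring the Brownian-motion property through the joint law of $(X,W^e)$ is exactly the bookkeeping the paper leaves implicit in the phrase ``by definition.''
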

\begin{proof}
We begin with the second claim.
Note that $\FF^{W,X} \subset \FF^{\eta,X}$, since $W$ is clearly $\FF^{\eta,X}$-adapted. Note that $\PP(X \in \cdot\,|\,\eta=e)=P^e(\cdot)$ by definition, and thus 
\begin{align*}
W_t=X_t - X_0  - \int_0^t b(s,X,e)\,ds
\end{align*}
is a Brownian motion in the completion of  $\FF^{\eta,X}$ under $\PP( \cdot\,|\,\eta=e)$, for $M$-a.e.\ $e$. 

To prove the first claim, let $0 \le s < t \le T$, and let $g(\eta)$ and $h(X)$ be bounded random variables, measurable with respect to $\F^\eta_s$ and $\F^X_s$, respectively. Then, for bounded measurable $\varphi : \R^d \to \R$,
\begin{align*}
\E[g(\eta)h(X)\varphi(W_t-W_s)] &= \E\big[g(\eta)\E\big[h(X)\varphi(W_t-W_s)\,|\,\eta\big] \big] \\
	&= \E\big[g(\eta)\E\big[h(X)\,|\,\eta\big] \big] \,\E[\varphi(W_t-W_s)] \\
	&= \E[g(\eta)h(X)]\E[\varphi(W_t-W_s)],
\end{align*}
with the second step using the result of the first paragraph of the proof. This shows that $W_t-W_s$ is independent of $\F^{\eta,X}_s$, completing the proof.
\end{proof}

\begin{lemma}[Uniqueness in law] \label{ap:le:SDE-uniq}
Suppose $(\Omega,\F,\FF,\PP)$ is a filtered probability space supporting continuous adapted processes $\eta$, $X$, and $W$, with values in $E$, $\R^d$, and $\R^d$, respectively. Let $M=\L(\eta)$. Assume $W$ is an $\FF$-Brownian motion, and that $\eta$ and $(X_0,W)$ are independent. Suppose also that
\begin{align*}
dX_t = b(t,X,\eta)\, dt + dW_t.
\end{align*}
Suppose $W$ is an $\FF^{\eta,X}$-Brownian motion under $\PP(\cdot\,|\,\eta=e)$, for $M$-a.e.\ $e \in \CE$. Then $\L(\eta,X)=M(de)P^e(dx)$. Moreover, $\F^X_t$ is conditionally independent of $\F^\eta_T$ given $\F^\eta_t$, for each $t \in [0,T]$.
\end{lemma}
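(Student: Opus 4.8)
The plan is to reduce the statement to the deterministic case of Lemma \ref{ap:le:SDE-determ} by conditioning on $\eta$. First I would fix a regular conditional probability: since $\eta$ takes values in the Polish space $\CE$, for $M$-a.e.\ $e$ there is a well-defined conditional law $\PP(\cdot\,|\,\eta=e)$, and under this measure the process $X$ satisfies, by the hypothesis, $dX_t = b(t,X,e)\,dt + dW_t$ with $W$ an $\FF^{\eta,X}$-Brownian motion (in particular an $\FF^X$-Brownian motion) under $\PP(\cdot\,|\,\eta=e)$. The key point is that the coefficient has been frozen: $b(t,X,e)$ is now a genuine (progressively measurable, linearly growing) functional of $X$ alone. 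Next I would check that $X_0$ still has law $\lambda$ under $\PP(\cdot\,|\,\eta=e)$ for $M$-a.e.\ $e$; this follows from the assumed independence of $\eta$ and $(X_0,W)$, which gives $\L(X_0\,|\,\eta=e)=\L(X_0)=\lambda$. Then $X$ is, under $\PP(\cdot\,|\,\eta=e)$, a weak solution of the SDE \eqref{ap:eq:SDE-determ} with parameter $e$, so by the uniqueness asserted in Lemma \ref{ap:le:SDE-determ} we get $\PP(X \in \cdot \,|\,\eta=e)=P^e$ for $M$-a.e.\ $e$. Disintegrating, $\L(\eta,X) = \int_{\CE} \delta_e \otimes P^e \, M(de) = M(de)P^e(dx)$, which is the first claim.

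For the conditional independence statement, I would argue as follows. By the non-anticipativity in Lemma \ref{ap:le:SDE-determ}, the restriction $R_t(X)=X|_{[0,t]}$ has, under $\PP(\cdot\,|\,\eta=e)$, a law $P^e \circ R_t^{-1}$ which depends on $e$ only through $e|_{[0,t]}$. In other words, the conditional law of $R_t(X)$ given $\eta$ is a measurable function of $\eta|_{[0,t]}$, i.e.\ is $\F^\eta_t$-measurable. Hence for bounded measurable $h$ depending only on $X|_{[0,t]}$ we have $\E[h(X)\,|\,\F^\eta_T] = \E[h(X)\,|\,\F^\eta_t]$ a.s. Since $\F^X_t$ is generated by such functionals $h(X)$, this is exactly the statement that $\F^X_t$ is conditionally independent of $\F^\eta_T$ given $\F^\eta_t$. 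A small technical point to be careful about here is that one should phrase this via a countable generating family of $\F^X_t$ (e.g.\ finite-dimensional cylinder functions) so that the a.s.\ equalities hold simultaneously, which is routine.

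The main obstacle, such as it is, is purely measure-theoretic bookkeeping: making sure that the regular conditional probabilities $\PP(\cdot\,|\,\eta=e)$ can be chosen so that "$W$ is an $\FF^X$-Brownian motion and the SDE holds" is valid for $M$-a.e.\ $e$ simultaneously, and that the map $e \mapsto P^e$ used in the disintegration is the same Borel map produced in Lemma \ref{ap:le:SDE-determ}. Both follow from standard facts about regular conditional distributions on Polish spaces (e.g.\ \cite[Theorem 6.3]{kallenberg-foundations}) together with the measurability of $e \mapsto P^e$ already established; there is no real analytic difficulty. Everything else — the Girsanov-based uniqueness — has been outsourced to Lemma \ref{ap:le:SDE-determ}.
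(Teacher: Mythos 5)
Your proposal is correct and follows essentially the same route as the paper's proof: condition on $\eta=e$ to freeze the coefficient, invoke the uniqueness of Lemma \ref{ap:le:SDE-determ} to get $\PP(X\in\cdot\,|\,\eta=e)=P^e$, and then use the non-anticipativity of $e\mapsto P^e$ to deduce $\E[h(X)\,|\,\F^\eta_T]=\E[h(X)\,|\,\F^\eta_t]$ for $\F^X_t$-measurable $h$, which yields the conditional independence. The extra measure-theoretic bookkeeping you flag (regular conditional probabilities, countable generating families) is indeed routine and the paper leaves it implicit.
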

\begin{proof}
Under $\PP(\cdot\,|\,\eta=e)$ for $M$-a.e.\ $e \in \CE$, $X$ solves the SDE
\begin{align*}
dX_t = b(t,X,e)\,dt + dW_t, \qquad X_0\sim\lambda,
\end{align*}
with $W$ an $\FF^{\eta,X}$-Brownian motion. By Lemma \ref{ap:le:SDE-determ}, this implies $\PP(X \in \cdot\,|\,\eta=e)=P^e(\cdot)$, proving the first claim. To prove the second, let $t \in [0,T]$, and let $h(X)$ be a bounded $\F^X_t$-measurable random variable. The non-anticipativity of Lemma \ref{ap:le:SDE-determ} implies that $\E[h(X)\,|\,\F^\eta_T]=\E[h(X)\,|\,\eta]=\langle P^\eta,h\rangle$ is $\F^\eta_t$-measurable. Hence, $\E[h(X)\,|\,\F^\eta_T]=\E[h(X)\,|\,\F^\eta_t]$ a.s. This holds for every such $h$, which implies the claimed conditional independence.
\end{proof}

\begin{lemma}[Strong uniqueness]  \label{ap:le:SDE-uniq-strong}
Under the assumptions of Lemma \ref{ap:le:SDE-uniq}, suppose also that
\begin{align}
b(t,x,e)=\widetilde{b}(t,x_t,e), \qquad \forall (t,x,e) \in [0,T] \times \C^d \times \CE, \label{ap:def:Markov}
\end{align}
for some semi-Markov function $\widetilde{b} : [0,T] \times \R^d \times \CE \to \R^d$. Then the process $X$  in Lemma \ref{ap:le:SDE-uniq} is necessarily adapted to the completion of the filtration $\FF^{X_0,W,\eta}=(\sigma(X_0,W_s,\eta_s:s \le t))_{t \in [0,T]}$.
\end{lemma}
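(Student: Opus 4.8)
\textbf{Proof proposal for Lemma \ref{ap:le:SDE-uniq-strong}.}

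The plan is to reduce the claim to a Yamada--Watanabe-type argument: since the coefficient is now Markovian in the state (condition \eqref{ap:def:Markov}), uniqueness in law (Lemma \ref{ap:le:SDE-uniq}) together with pathwise uniqueness will upgrade to a strong solution, i.e.\ adaptedness to $\FF^{X_0,W,\eta}$. The first step is to establish pathwise uniqueness: given two $\CE \times \C^d \times \C^d$-valued processes $(\eta, X, W)$ and $(\eta, X', W)$ on a common filtered space, both satisfying the SDE $dX_t = \widetilde b(t,X_t,\eta)\,dt + dW_t$ with the \emph{same} driving pair $(X_0, W, \eta)$ and with $W$ a Brownian motion in the relevant filtrations, we must show $X = X'$ a.s. The subtlety is that $\widetilde b$ is only assumed measurable (not Lipschitz), so the classical Gronwall argument is unavailable; instead I would freeze $\eta = e$ and work under the conditional measure $\PP(\cdot\,|\,\eta = e)$, where the equation becomes $dX_t = \widetilde b(t, X_t, e)\,dt + dW_t$ with a merely measurable but bounded-growth drift. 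For such one-dimensional-noise SDEs with additive identity diffusion, pathwise uniqueness holds by the Zvonkin/Veretennikov transformation, or more simply here via Girsanov: both $X$ and $X'$ are Brownian motions under equivalent changes of measure, and the Novikov/growth bound \eqref{ap:growthassump-path} makes this rigorous. Actually the cleanest route, following \cite[Appendix A]{Lacker_closedloop} in spirit, is to avoid pathwise uniqueness of the full SDE and instead argue directly: freeze $\eta = e$, and observe that under $\PP(\cdot\,|\,\eta=e)$ the process $X$ is the unique-in-law solution (Lemma \ref{ap:le:SDE-determ}), which by Girsanov is $\W_\lambda$-absolutely continuous with a Radon--Nikodym density $F(\cdot, e)$ that is a measurable functional of the driving Brownian path; this already exhibits $X$ as a measurable function of $(X_0, W)$ conditionally on $\eta$.

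Concretely, the key steps I would carry out in order: (1) Fix $e \in \CE$ and work under $\PP(\cdot\,|\,\eta=e)$. By Lemma \ref{ap:le:SDE-determ}, the law of $X$ is $P^e$, absolutely continuous w.r.t.\ $\W_\lambda$. (2) Apply the strong existence result for SDEs with bounded-growth measurable drift and nondegenerate (here, identity) diffusion --- this is where \eqref{ap:def:Markov} is essential, since the Markovian structure is what makes the Zvonkin--Veretennikov strong-existence machinery applicable --- to conclude that the SDE $dX^e_t = \widetilde b(t, X^e_t, e)\,dt + dW_t$, $X^e_0 \sim \lambda$, admits a solution which is adapted to the completion of $\FF^{X^e_0, W}$; combined with uniqueness in law from Lemma \ref{ap:le:SDE-determ}, pathwise uniqueness follows (Yamada--Watanabe in the classical one-dimensional form, or directly from Veretennikov's theorem). (3) Pathwise uniqueness plus existence of a strong solution gives, for each $e$, a Borel functional $\Phi_e : \R^d \times \C^d \to \C^d$, non-anticipative in the Brownian variable, with $X = \Phi_e(X_0, W)$ a.s.\ under $\PP(\cdot\,|\,\eta=e)$. (4) Check joint measurability of $(e, x_0, w) \mapsto \Phi_e(x_0, w)$ --- this can be arranged by the same monotone-class/measurable-selection argument used in Lemma \ref{ap:le:SDE-determ} to make the density jointly measurable, or by noting $\Phi_e$ arises as an a.s.\ limit of Picard-type or Euler-type approximations whose measurability in $e$ is transparent. (5) Unfreeze: since $\eta$ and $(X_0, W)$ are independent and $X = \Phi_\eta(X_0, W)$ a.s., the process $X$ is a measurable functional of $(\eta, X_0, W)$; finally, non-anticipativity of $\Phi_e$ (from Lemma \ref{ap:le:SDE-determ}) together with the fact that $W$ is an $\FF$-Brownian motion yields that $X_t$ is $\sigma(X_0, W_s, \eta_s : s \le t)$-measurable up to completion, which is the claim.

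I expect the main obstacle to be step (2)/(4): asserting \emph{strong} existence for the frozen equation $dX^e_t = \widetilde b(t, X^e_t, e)\,dt + dW_t$ when $\widetilde b$ is only Borel measurable (with linear growth) and $e$ is an arbitrary path. For $d = 1$ this is classical (Zvonkin), and for general $d$ with identity diffusion this is Veretennikov's theorem --- so the mathematical content is available off the shelf, but one must be careful that these results apply to the time-inhomogeneous, path-frozen coefficient $\widetilde b(\cdot, \cdot, e)$, which they do since $e$ enters only as a fixed parameter making $(t,x) \mapsto \widetilde b(t,x,e)$ a bounded-growth measurable field. The remaining delicate point is the joint-in-$e$ measurability of the strong-solution functional $\Phi_e$; here I would mirror the measurable-selection / monotone-class argument already deployed in the proof of Lemma \ref{ap:le:SDE-determ} (constructing a jointly measurable version of the relevant stochastic integrals and densities), so no genuinely new technique is needed --- it is a bookkeeping step. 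Alternatively, one can sidestep explicit construction of $\Phi_e$ by invoking a parametrized Yamada--Watanabe theorem directly, but the hands-on route keeps the appendix self-contained as the authors intend.
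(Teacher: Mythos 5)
Your overall strategy (Veretennikov's pathwise uniqueness for the frozen equation, then a Yamada--Watanabe upgrade to adaptedness) is exactly the paper's, but the implementation of the final step differs in a way that matters. The paper never constructs, and never needs to construct, a jointly measurable solution map $(e,x_0,w)\mapsto\Phi_e(x_0,w)$. Instead it runs the classical Yamada--Watanabe coupling: it forms the measure $M(de)\W_{\delta_0}(dw)\lambda(dx_0)P(dx;e,w,x_0)\widetilde P(d\widetilde x;e,w,x_0)$, under which the given solution $X$ and the canonical solution $\widetilde X$ from Lemma \ref{ap:le:SDE-exist} are conditionally \emph{independent} given $(\eta,W,X_0)$; pathwise uniqueness (Veretennikov for bounded drift, plus a localization to cover the linear-growth case, which you should not omit) forces $X=\widetilde X$ a.s., and two conditionally independent, a.s.\ equal random variables are automatically a.s.\ equal to a measurable function of the conditioning variables. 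This sidesteps entirely the joint-in-$e$ measurability of $\Phi_e$.

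That joint measurability is precisely the soft spot in your primary route, and your proposed fixes for it do not work as stated: Picard or Euler approximations do not converge for a drift that is merely Borel measurable (the whole point of the Zvonkin--Veretennikov theory is that the strong solution is \emph{not} obtained by such schemes), and the monotone-class argument from Lemma \ref{ap:le:SDE-determ} produces a jointly measurable version of a Girsanov \emph{density}, not of a solution \emph{map}. So step (4) of your outline is not bookkeeping; it is the step the coupling argument is designed to avoid. Your closing remark that one could instead "invoke a parametrized Yamada--Watanabe theorem directly" is the right instinct --- that is, in concrete form, what the paper does --- so to make your proof complete you should promote that alternative from an aside to the main argument, or else supply a genuine proof of the measurability of $e\mapsto\Phi_e$.
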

\begin{proof}
It follows from a result of Veretennikov \cite{veretennikov1981strong} in the case of bounded $b$ that the SDE \eqref{ap:eq:SDE-determ} is pathwise unique, for each (non-random) $e \in \CE$. For general $b$ satisfying \eqref{ap:growthassump-path} and \eqref{ap:def:Markov}, the same is true by a straightforward localization argument. 

Now, let $P(dx;e,w,x_0)$ denote a version of the conditional law of $X$ given $(\eta,W,X_0)=(e,w,x_0)$.
Let $(\widetilde\eta,\widetilde X)$ be the process constructed in Lemma \ref{ap:le:SDE-exist}, and let $\widetilde W$ be the corresponding Brownian motion therein. Define $\widetilde{P}(d\widetilde{x};e,w,x_0)$ to be a version of the conditional law of $\widetilde{X}$ given $(\eta,W,\widetilde{X}_0)=(e,w,x_0)$.
Consider the space $\overline\Omega= \CE \times \C^d \times \R^d \times \C^d\times \C^d$, equipped with the probability measure
\begin{align*}
\PP(de,dw,dx_0,dx,d\widetilde{x}) := M(de)\W_{\delta_0}(dw)\lambda(dx_0)P(dx;e,w,x_0)\widetilde{P}(d\widetilde{x};e,w,x_0),
\end{align*}
recalling that $\W_{\delta_0}$ is the Wiener measure.
Now, on  some new probability space, construct $(\eta,W,X_0,X,\widetilde{X})$ to have joint law $\PP$, so that $X$ and $\widetilde{X}$ are conditionally independent given $(\eta,W,X_0)$.
Note also that $\L(\eta,\widetilde{X})=M(de)P^e(dx)$ by construction, and 
\begin{align*}
\widetilde{X}_t = X_0 + \int_0^t b(s,\widetilde{X},\eta)\,ds + W_t, \qquad \forall t\in [0,T], \ a.s.
\end{align*}
It is straightforward to check that, under the conditional measure $\PP(\cdot\,|\,\eta=e)$, for $M$-a.e.\ $e$, it holds that $W$ is a Brownian motion in the filtration $\FF^{W,X,\widetilde{X}}$, that $X_0$ has law $\lambda$, and that both $X$ and $\widetilde{X}$ satisfy
\begin{align*}
\widetilde{X}_t &= X_0 + \int_0^t b(s,\widetilde{X},e)\,ds + W_t, \\
X_t &= X_0 + \int_0^t b(s,X,e)\,ds + W_t,\qquad \forall t\in [0,T].
\end{align*}
The pathwise uniqueness argued above implies that $\PP(X=\widetilde{X}\,|\,\eta)=1$, and thus also $\PP(X=\widetilde{X}\,|\,\eta,W,X_0)=1$ a.s. Since $X$ and $\widetilde{X}$ are conditionally independent given $(\eta,W,X_0)$, it follows that there must exist a Borel map $F : \CE \times \C^d \times \R^d \to \C^d$ such that $X=\widetilde{X}=F(\eta,W,X_0)$ a.s. The same is valid for any time $t < T$, and the claim follows.
\end{proof}

\begin{lemma}\label{ap:le:stability_lemma}
Let $\eta$ be a random variable with law $M \in \P(\CE)$.
Consider a sequence of semi-Markov functions $b_n : [0,T] \times \R^d \times \CE \rightarrow \R^d$ such that $\lim_{n\to\infty}b_n(t,x,e)= b(t,x,e)$ for Lebesgue a.e.\ $(t,x) \in [0,T] \times \R^d$ and $M$-a.e.\ $e \in \CE$.
Suppose one of the following holds, for some $C < \infty$:
\begin{itemize}
\item \textit{Bounded case}: $|b| \le C$ and $|b_n| \le C$ for all $n \in \N$.
\item \textit{Unbounded case}: For all $(t,x,e) \in [0,T] \times \R^d \times \CE$ and $n \in \N$,
\[
|b(t,x, e)| + | b_n(t, x, e)| \le C\bigg(1 + |x| + \sup_{s \in [0,t]}d_E(e_s,\circ)\bigg),
\]
and also $\lambda \in \P^2(\R^d)$.
\end{itemize}
 Consider the SDEs
\begin{align}
dX_t & = b(t,X_t,\eta)dt + dW_t, \quad\quad X_0\sim \lambda, \label{def:ap:SDE-stochastic-markovian} \\
dX^n_t & = b_n(t,X_t^n,\eta)dt + dW_t, \quad\quad X^n_0\sim \lambda, \label{def:ap:SDE-stochastic-n}
\end{align}
which are well-posed in the sense of Lemma \ref{ap:le:SDE-exist} when $(X_0,W)$ and $(X_0^n,W)$ are each independent of $\eta$.
Then, for any measurable functions $h_n,h : \CE \times \C^d \to \R$ sharing a common uniform bound, such that $h_n(\eta,X) \to h(\eta,X)$ in probability, we have
\begin{align*}
\lim_{n\rightarrow\infty}\E\,h_n(\eta,X^n) = \E\, h(\eta,X).
\end{align*}
\end{lemma}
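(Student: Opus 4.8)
The strategy is to reduce the statement to a convergence of Radon--Nikodym densities, exactly as in the proof of Lemma \ref{ap:le:SDE-determ}, but carried out \emph{conditionally} on $\eta$ and then integrated out against $M$. First I would condition on $\eta=e$: for $M$-a.e.\ $e$ the processes $X^n$ and $X$ solve the deterministic-coefficient SDEs \eqref{ap:eq:SDE-determ} with drifts $b_n(\cdot,\cdot,e)$ and $b(\cdot,\cdot,e)$, so by Lemma \ref{ap:le:SDE-exist} and the explicit Girsanov formula recalled in Lemma \ref{ap:le:SDE-determ}, under $\PP(\cdot\,|\,\eta=e)$ the laws of $X^n$ and $X$ are $P^e_n$ and $P^e$ respectively, with
\[
\frac{dP^e_n}{d\W_\lambda}(w) = \exp\!\left(\int_0^T b_n(t,w,e)\cdot dw_t - \tfrac12\int_0^T|b_n(t,w,e)|^2\,dt\right),
\]
and likewise for $P^e$. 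Because $h_n$ share a common bound and $h_n(\eta,X)\to h(\eta,X)$ in probability, it suffices (by a standard subsequence argument together with dominated convergence in $e$) to show that for $M$-a.e.\ $e$ one has $\|P^e_n - P^e\|_{\mathrm{TV}}\to 0$; indeed then $\int h_n\,dP^e_n - \int h\,dP^e \to 0$ for a.e.\ $e$ after passing to a further subsequence along which $h_n\to h$ $P^e$-a.s., and the bounded convergence theorem in $e$ finishes the job.

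The remaining point is thus the TV (equivalently $L^1(\W_\lambda)$) convergence of the densities for fixed $e$. By Scheff\'e's lemma, since each density integrates to $1$ against $\W_\lambda$, it is enough to prove convergence $\W_\lambda$-a.s.\ (or in $\W_\lambda$-probability). Write $Z_n := \int_0^T b_n(t,w,e)\cdot dw_t - \tfrac12\int_0^T|b_n(t,w,e)|^2\,dt$ and $Z := \int_0^T b(t,w,e)\cdot dw_t - \tfrac12\int_0^T|b(t,w,e)|^2\,dt$. The Lebesgue integrals converge $\W_\lambda$-a.s.\ by dominated convergence (bounded case) or by the growth bound together with $\lambda\in\P^2(\R^d)$ and $\E^{\W_\lambda}\|w\|_T^2<\infty$ (unbounded case). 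For the stochastic integrals, I would argue convergence in $\W_\lambda$-probability via It\^o isometry along the path $w$: using that $\int_0^T|b_n(t,w,e)-b(t,w,e)|^2\,dt\to 0$ for a.e.\ $w$ (again dominated convergence pathwise) together with uniform integrability supplied by the quadratic growth bound, one gets $\int_0^T (b_n-b)(t,w,e)\cdot dw_t \to 0$ in $L^2(\W_\lambda)$, hence in probability, hence $Z_n\to Z$ in $\W_\lambda$-probability, hence $dP^e_n/d\W_\lambda \to dP^e/d\W_\lambda$ in probability, and Scheff\'e gives $\|P^e_n-P^e\|_{\mathrm{TV}}\to0$.

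The main obstacle is the bookkeeping around the two layers of ``almost everywhere'': the hypotheses give pointwise convergence $b_n\to b$ only for Lebesgue-a.e.\ $(t,x)$ and $M$-a.e.\ $e$, and I must transfer this into pathwise a.e.\ convergence of the time-integrals $\int_0^T|b_n(t,w,e)-b(t,w,e)|^2\,dt$ along $\W_\lambda$-a.e.\ path $w$. This works because, for fixed $e$, the map $(t,w)\mapsto w_t$ pushes $dt\otimes\W_\lambda$ to a measure absolutely continuous in $x$ (the one-dimensional marginals of Brownian motion have densities), so Lebesgue-null sets in $x$ remain null; combined with Fubini this yields the required pathwise convergence $dt\otimes\W_\lambda$-a.e., and the dominating bound promotes it to $L^1(dt)$-convergence $\W_\lambda$-a.s. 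One must be slightly careful that the common growth/boundedness hypothesis provides the domination uniformly in $n$, which it does by assumption. Apart from this measure-theoretic care the argument is routine, and the uniform bound on $h_n$ makes the final passage to the limit automatic.
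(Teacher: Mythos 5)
Your proof is correct, and it diverges from the paper's argument in the step that matters most. The first half is essentially identical: both you and the paper fix $e$, write the Girsanov densities $Z^n(e,\cdot)=dP^{n,e}/d\W_\lambda$, and prove $Z^n(e,\cdot)\to Z(e,\cdot)$ in $\W_\lambda$-probability via It\^o's isometry and dominated convergence (your transfer of the Lebesgue-a.e.\ convergence of $b_n$ through the absolutely continuous marginals of $W_t$ for $t>0$ is exactly the point that needs care, and you handle it correctly). Where you part ways is in upgrading this convergence in probability to convergence of the expectations $\E\,h_n(\eta,X^n)$. The paper works on the product space with the joint density $Z^n=d\L(\eta,X^n)/d(M\times\W_\lambda)$, establishes uniform integrability of $1_{K\times\C^d}Z^n$ for bounded $K\subset\CE$ via an entropy computation ($\langle\W_\lambda,Z^n\log Z^n\rangle$ controlled by a Gronwall second-moment bound) and de la Vall\'ee Poussin, and then splits the target difference into a $2M(K^c)$ remainder plus two terms handled by the hypothesis on $h_n$ and by $L^1$ convergence on $K\times\C^d$. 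You instead invoke Scheff\'e's lemma conditionally on $e$: since each $Z^n(e,\cdot)$ is a probability density (Girsanov applies exactly under the linear-growth hypothesis, so the stochastic exponential is a true martingale), convergence in $\W_\lambda$-probability automatically yields $\|P^{n,e}-P^e\|_{\mathrm{TV}}\to 0$ for $M$-a.e.\ $e$, after which bounded convergence in $e$ and the standard subsequence principle for the $h_n$ finish the proof. Your route buys a cleaner argument — no entropy estimate, no uniform integrability, no truncation in $e$ — at the price of leaning on the fact that the densities integrate to exactly one, which both proofs need anyway to apply Girsanov. Both arguments are sound.
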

\begin{proof}
The bounded case was proven in \cite[Appendix A]{Lacker_closedloop}, so we focus on the unbounded case.
From Lemma \ref{ap:le:SDE-exist}, we know that $\L(\eta, X^n) = M(de)P^{n,e}(dx)$ where $P^{n,e}$ is defined as the law of the unique solution of the SDE
\[
dX^{n,e}_t = b(t, X^{n,e}_t, e)dt + dW_t, \quad X^{n,e}_0 \sim \lambda.
\]
We work mostly on the space $\CE \times \C^d$, with a generic element denoted $(e,w)$, and with the reference probability measure $\PP:=M \times \W_\lambda$, where $\W_\lambda$ again denotes the law of a $d$-dimensional Brownian motion started from initial law $\lambda$.
For each $n \in \N$, the stochastic exponential
\[
Z^n_t(e,w)= \exp\left(\int_0^t b_n(s, w_s, e) \cdot dw_s - \frac{1}{2}\int_0^t |b_n(s, w_s, e)|^2 ds\right)
\]
is well defined $M \times \W_\lambda$-a.e. By Girsanov's theorem (justified in the linear growth case as in \cite[Theorem 3.5.16]{Karatzas} or \cite[Theorem 7.7]{liptser2001statistics}), we have
\begin{align}
\frac{dP^{n,e}}{d\W_\lambda}(w) = Z^n_T(e,w), \label{pf:ap:dpdw1}
\end{align}
and the process $(Z^n_t(e,\cdot))_{t \in [0,T]}$  is a strictly positive martingale under $\W_\lambda$ for every $e$. Define $P^e$ and $Z_T(e,w)$ in exactly the same manner but using $b$ instead of $b_n$, so that $\L(\eta, X) = M(de)P^e(dx)$ and $\frac{dP^{e}}{d\W_\lambda}(w) = Z_T(e,w)$. Abbreviate $Z:=Z_T$ and $Z^n:=Z^n_T$.
The formulas \eqref{pf:ap:dpdw1} and $\L(\eta, X^n) = M(de)P^{n,e}(dx)$, as well as the analogous formulas without the $n$, imply
\begin{align}
Z^n = \frac{d\L(\eta,X^n)}{d\PP}, \qquad Z = \frac{d\L(\eta,X)}{d\PP}. \label{pf:ap-RNform}
\end{align}

\vskip.2cm

\noindent\textbf{Step 1.} 
We first show $Z^n\to Z$ in probabililty under $\PP$. To do so, it suffices to show that $\log Z^n(e,\cdot) \to \log Z(e,\cdot)$ in $L^1(P^e)$ for $M$-a.e.\ $e \in \CE$. 
Recall that $b_n\to b$ for Lebesgue a.e.\ $(t, x) \in [0, T] \times \R^d$ and $M$-a.e.\ $e$. Using the uniform linear growth assumption and the square-integrability of $W$, we deduce from It\^o's isometry and dominated convergence that, for $M$-a.e.\ $e$,
\[
\E \left[\Big|\int_0^T \left(b_n(t, W_t, e) - b(t, W_t, e)\right) \cdot dW_t\Big|^2\right] = \E  \int_0^T \left|b_n(t, W_t, e) - b(t, W_t, e)\right|^2dt  \rightarrow 0,
\]
where $W \sim \W_\lambda$ is a Brownian motion started from law $\lambda$.
The claim easily follows.

\vskip.2cm

\noindent\textbf{Step 2.} 
Next, we show that $\left\{1_{K \times \C^d}  Z^n \ : n\in \N \right\}$ is uniformly integrable under $\PP$, for any bounded set $K \subset \CE$. A well known entropy calculation shows for each $e \in \CE$ that
\begin{align*}
\langle \W_\lambda,\,Z^n(e,\cdot)\log Z^n(e,\cdot)\rangle	&=  \int_{\C^d} \log Z^n(e,w) \,P^{n,e}(dw)  \\
    &= \frac12 \int_{\C^d} \int_0^T |b_n(t, w_t, e)|^2 \,dt \,P^{n,e}(dw)  \\
    &= \frac12\E\left[\int_0^T |b_n(s, X^n_s, \eta)|^2 dt \,\Big|\, \eta=e\right].
\end{align*}
Using the uniform growth assumption and a standard argument involving Gronwall's inequality, we deduce from boundedness of $K$ that
\[
\sup_n\sup_{e \in K}\int_{\C^d}\|w\|_T^2\,P^{n,e}(dw) < \infty.
\]
Hence, 
\begin{align*}
\sup_n \langle \PP, \, 1_{K \times \C^d} Z^n\log Z^n\rangle = \sup_n \frac12\E\left[1_K(\eta)\int_0^T |b_n(s, X^n_s, \eta)|^2 dt \right] < \infty.
\end{align*}
The criterion of de la Vall\'ee Poussin implies the claimed uniform integrability. 

\vskip.2cm

\noindent\textbf{Step 3.} 
To complete the proof, consider functions $h_n$ and $h$ as in the statement of the Lemma. 
Let $K \subset \CE$ be bounded.
Using \eqref{pf:ap-RNform} and $\eta \sim M$, we have
\begin{align*}
\big| \E[ h_n( \eta, X^n)] - \E[h(\eta, X)] \big| & \le 2M(K^c) + \big| \E[ h_n( \eta, X)] - \E[ h(\eta, X)] \big| \\
	&\qquad + \big| \E[1_K(\eta)h_n( \eta, X^n)] - \E[1_K(\eta)h_n(\eta, X)] \big| \\
    & = 2M(K^c) + \big| \E[ h_n(\eta, X)] - \E [h(\eta, X)] \big| \\
    	&\qquad + \big| \langle \PP, 1_{K \times \C^d} (Z^n-Z)h_n\rangle \big|.
\end{align*}
As $n\to\infty$, the second to last term vanishes by assumption, and the last term vanishes because of the results of  Steps 1 and 2. Finally, let $K$ increase to $\CE$ so that $M(K^c)\to 0$.
\end{proof}

\section{Relations between Nash equilibria}
\label{ap:relations_Nash_eq}

This section proves Proposition \ref{pr:np-eq-inclusion0}, relating the different Nash  equilibrium concepts for $n$-player games defined in Section \ref{se:nplayergame}. We first recall a well known projection lemma.

\begin{lemma} \cite[Corollary 3.11]{brunick2013mimicking} \label{le:markovprojection}
Let $(\Omega,\F,\FF,\PP)$ be a filtered probability space supporting an $\FF$-adapted continuous process $X$ and two independent $\FF$-Brownian motion $W$ and $B$, as well as an $\FF$-progressively measurable process $(b_t)_{t \in [0,T]}$ satisfying $\E\int_0^T|b_t|dt < \infty$, such that
\[
X_t = X_0 + \int_0^t b_s\,ds + \sigma W_t + \gamma  B_t, \quad t \in [0,T].
\]
Let $\widetilde{b} : [0,T] \times \C^d \rightarrow \R^d$ be any progressively measurable function satisfying
\begin{align*}
\widetilde{b}(t,X) = \E[b_t\, | \, \F_t^X], \ \ a.s., \ a.e.\ t \in [0,T].
\end{align*}
Then, there exists a filtered probability space $(\widetilde{\Omega},\widetilde{\F},\widetilde{\FF},\widetilde{\PP})$ supporting an $\widetilde{\FF}$-adapted continuous process $\widetilde{X}$ and independent $\widetilde{\FF}$-Brownian motions $\widetilde{W}$ and $\widetilde{B}$,  such that $\widetilde\PP \circ \widetilde{X}^{-1} =  \PP \circ X^{-1}$ and
\[
d\widetilde{X}_t = \widetilde{b}(t,\widetilde{X})dt + \sigma \widetilde{W}_t + \gamma \widetilde{B}_t, \ \ t \in [0,T], \ \  \widetilde{X}_0=X_0.
\]
\end{lemma}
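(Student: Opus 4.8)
The plan is to take the mimicking process $\widetilde X$ to be (a copy of) $X$ itself, merely re-expressed relative to its own filtration $\FF^X$; then the equality of laws $\widetilde\PP\circ\widetilde X^{-1}=\PP\circ X^{-1}$ is automatic, and the entire content is to rewrite the dynamics of $X$ on $\FF^X$ in the advertised form. This is exactly \cite[Corollary 3.11]{brunick2013mimicking}, so one may simply invoke it, but the argument is short enough to recall.

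First I would show that
\[
M_t := X_t - X_0 - \int_0^t \widetilde b(s,X)\,ds
\]
is a continuous $\FF^X$-martingale. Since $N_t:=\sigma W_t+\gamma B_t$ is a continuous $\FF$-martingale and $\F^X_u\subseteq\F_u$, we have $\E[N_t-N_u\mid\F^X_u]=0$ for $u\le t$. Combining this with Fubini's theorem and the tower property (using $\F^X_u\subseteq\F^X_s$ for $s\ge u$) yields
\[
\E\!\left[X_t-X_u-\int_u^t\E[b_s\mid\F^X_s]\,ds\;\Big|\;\F^X_u\right]=0 ,
\]
and the hypothesis $\widetilde b(s,X)=\E[b_s\mid\F^X_s]$ a.s.\ for a.e.\ $s$, together with the bound $\E\int_0^T|\widetilde b(s,X)|\,ds\le\E\int_0^T|b_s|\,ds<\infty$ coming from Jensen's inequality, lets one replace $\E[b_s\mid\F^X_s]$ by $\widetilde b(s,X)$ inside the time integral. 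The process $M$ is continuous with $M_0=0$, is integrable (because $X_0$ cancels), and satisfies $[M]_t=[X]_t=(\sigma\sigma^\top+\gamma\gamma^\top)t$, a deterministic matrix, since $W$ and $B$ are independent.

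Next I would realize $M$ as $\sigma\widetilde W+\gamma\widetilde B$ for independent Brownian motions $\widetilde W$ and $\widetilde B$. Enlarge the space to carry a standard $2d$-dimensional Brownian motion $\widehat N$ independent of $X$; let $\widetilde\FF$ be $\FF^X$ augmented by the filtration of $\widehat N$ (and completed), so that $M$ is still a $\widetilde\FF$-martingale by independence. With $C:=[\sigma\;\;\gamma]\in\R^{d\times 2d}$ and $A:=CC^\top=\sigma\sigma^\top+\gamma\gamma^\top$ (invertible because $\sigma$ is), set
\[
(\widetilde W,\widetilde B):=C^\top A^{-1}M+(I_{2d}-C^\top A^{-1}C)\,\widehat N .
\]
A direct computation shows $C^\top A^{-1}C$ is an orthogonal projection, so $(\widetilde W,\widetilde B)$ is a continuous $\widetilde\FF$-martingale with quadratic covariation matrix $I_{2d}\,t$; by L\'evy's characterization it is a standard $2d$-dimensional Brownian motion, i.e.\ $\widetilde W$ and $\widetilde B$ are independent Brownian motions, and $C(\widetilde W,\widetilde B)=M$, i.e.\ $M=\sigma\widetilde W+\gamma\widetilde B$. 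Taking $\widetilde X:=X$, driven by the drift $\widetilde b(\cdot,\widetilde X)$ and the Brownian motions $\widetilde W,\widetilde B$ with $\widetilde X_0=X_0$, then gives the asserted SDE on $(\widetilde\Omega,\widetilde\F,\widetilde\FF,\widetilde\PP)$. The only genuinely delicate point is the optional-projection bookkeeping in the first step — keeping the various filtrations straight while passing from $\FF$ to $\FF^X$ — while the second step is routine linear algebra followed by L\'evy's theorem.
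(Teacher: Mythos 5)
Your argument is correct, but it is not the route the paper takes: the paper offers no proof at all and simply cites \cite[Corollary 3.11]{brunick2013mimicking}. What your write-up makes clear is that, as stated here, the lemma is the \emph{full-filtration} projection — the drift is conditioned on $\F^X_t$ (the whole past of $X$) and only the law of the entire path $\PP\circ X^{-1}$ must be preserved — so the mimicking process can be taken to be $X$ itself and no genuine mimicking machinery is needed. The two steps you give are exactly what is required: (i) the tower-property/Fubini computation showing $M_t=X_t-X_0-\int_0^t\widetilde b(s,X)\,ds$ is a continuous $\FF^X$-martingale with $\langle M\rangle_t=(\sigma\sigma^\top+\gamma\gamma^\top)t$ (the finite-variation part drops out of the bracket, and the cross-variation of the independent $W$ and $B$ vanishes); and (ii) the standard enlargement plus L\'evy characterization to realize $M=\sigma\widetilde W+\gamma\widetilde B$, using that $A=\sigma\sigma^\top+\gamma\gamma^\top=CC^\top$ is invertible because $\sigma$ is non-singular, that $C^\top A^{-1}C$ is an orthogonal projection, and that $M$ stays a martingale in $\FF^X\vee\FF^{\widehat N}$ by independence of $\widehat N$ from $X$. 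The Brunick--Shreve corollary buys more than is used here — it also covers conditioning on coarser statistics of the path, e.g.\ on $X_t$ alone with only the one-dimensional marginals preserved, which is the genuinely hard case and cannot be obtained by taking $\widetilde X=X$ — but for the statement as formulated in this appendix your self-contained proof is complete.
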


\noindent\textbf{Proof of Proposition \ref{pr:np-eq-inclusion0}.}
Let $\epsilon \ge 0$, and denote $\widetilde{W}^i = (\sigma W^i, \gamma B)$. We omit the proof of (a), as it is identical to the proof of \cite[Proposition 2.2]{Lacker_closedloop}, with every $W^i$ therein replaced by $\widetilde{W}^i$.
The proof of (b) is quite similar as well, but we give the details:
Fix a closed-loop $\epsilon$-Nash equilibrium $\bm{\alpha}=(\alpha^1,\ldots,\alpha^n) \in \A_n^n$ and a signal process $S$. We want to show that $\bm{\alpha}$ is also a S-closed $\epsilon$-Nash equilibrium. Consider the state processes $\bm{X}=(X^1,\ldots,X^n)$ solving the SDE system
\begin{align*}
dX^i_t &= b(t,X^i_t,\mu^n_t,\alpha^i(t,\bm{X}))dt + d\widetilde{W}^i_t, \quad i = 1, \ldots, n, \quad \mu^n_t = \frac{1}{n}\sum_{k=1}^n\delta_{X^k_t}.
\end{align*}
Let $\beta \in \A_n(S)$.
By symmetry, it suffices to just focus on player $1$, showing that 
\begin{align}
J^n_1(\alpha^1,\ldots,\alpha^n) \ge J^n_1(\beta,\alpha^2,\ldots,\alpha^n) - \epsilon. \label{pf:np-inclusion2-3}
\end{align}
Let $\bm{Y}=(Y^1,\ldots,Y^n)$ be the solution of
\begin{align*}
dY^1_t &= b(t,Y^1_t,\nu^n_t,\beta(t,\bm{Y}, S))dt +  d\widetilde{W}^1_t, \\
dY^i_t &= b(t,Y^i_t,\nu^n_t,\alpha^i(t,\bm{Y}))dt +  d\widetilde{W}^i_t, \quad \ i \neq 1, \qquad \nu^n_t = \frac{1}{n}\sum_{k=1}^n\delta_{Y^k_t}.
\end{align*}
There exists progressively measurable functions $c_1: [0, T] \times (\C^d)^n \mapsto \R^d$ and $c_2: [0, T] \times (\C^d)^n \mapsto \R$ such that
\begin{align*}
(c_1(t, \boldsymbol{Y}), c_2(t, \boldsymbol{Y})) = \E\left[\Big(b(t, Y_t^1, \nu_t^n, \beta(t, \boldsymbol{Y}, S)),f(t, Y_t^1, \nu_t^n, \beta(t, \boldsymbol{Y}, S)) \Big)  \Big| \F_t^{\boldsymbol{Y}}\right], \ \ a.s.,
\end{align*}
for each $t \in [0,T]$. For details about constructing jointly measurable versions $(c_1,c_2)$, see, e.g., \cite[Proposition 5.1]{brunick2013mimicking} or \cite[Lemma C.3]{Lacker_closedloop}.
Recalling the definition of the convex set $K(t,x,m)$ from Assumption (\ref{assumption:A}.5), notice that
\begin{align*}
(c_1(t, \boldsymbol{Y}), c_2(t, \boldsymbol{Y})) \in K(t, Y_t^1, \nu_t^n)
\end{align*}
Using a measurable selection argument (see \cite[Theorem A.9]{haussmannlepeltier-existence} or \cite[Lemma 3.1]{dufourstockbridge-existence}), we may find a progressively measurable function $\widehat\alpha : [0, T] \times (\C^d)^n \mapsto A$ such that  
\begin{align*}
\E\left[b(t, Y_t^1, \nu_t^n, \beta(t, \boldsymbol{Y}, S)) \Big| \F_t^{\boldsymbol{Y}} \right] &= c_1(t,\bm Y) = b(t, Y_t^1, \nu_t^n, \widehat\alpha(t,\boldsymbol{Y})), \quad \text{and} \\
\E\left[f(t, Y_t^1, \nu_t^n, \beta(t, \boldsymbol{Y}, S)) \Big| \F_t^{\boldsymbol{Y}} \right] &= c_2(t,\bm Y) \leq f(t, Y_t^1, \nu_t^n, \widehat\alpha(t,\boldsymbol{Y})), \quad a.s., \ \ t\in[0, T].
\end{align*}
Thus, by Lemma \ref{le:markovprojection}, the unique in law weak solution $\bm{\widetilde{Y}}=(\widetilde{Y}^1,\ldots,\widetilde{Y}^n)$ of the SDE system
\begin{align*}
d\widetilde{Y}^1_t &= b(t, \widetilde{Y}^i_t,\widetilde{\nu}^n_t, \widehat\alpha(t,\bm{\widetilde{Y}}))dt + d\widetilde{W}^1_t,  \\
d\widetilde{Y}^i_t &= b(t,\widetilde{Y}^i_t,\widetilde{\nu}^n_t,\alpha^i(t,\bm{\widetilde{Y}}))dt + d\widetilde{W}^i_t, \quad \ i \neq 1, \quad \widetilde{\nu}^n_t = \frac{1}{n}\sum_{k=1}^n\delta_{\widetilde{Y}^k_t},
\end{align*}
satisfies $\L(\bm{\widetilde{Y}}) = \L(\bm{Y})$ and in particular $\widetilde{\boldsymbol{Y}}$ is the state process under the controls $(\widehat\alpha, \alpha^{2}, \cdots, \alpha^n)$. Using Fubini's theorem and the tower property,
\begin{align*}
J^n_1(\beta,\alpha^2,\ldots,\alpha^n) &= \E\left[\int_0^T f(t,Y^1_t,\nu^n_t,\beta(t,\bm{Y}, S))dt + g(Y^1_T,\nu^n_T)\right] \\
	& = \E\left[\int_0^T\E\left[f(t, Y_t^1, \nu_t^n, \beta(t, \boldsymbol{Y}, S)) \Big| \F_t^{\boldsymbol{Y}} \right]dt + g(Y^1_T,\nu^n_T)\right] \\
	& \leq \E\left[\int_0^T f(t,Y^1_t,\nu^n_t,\widehat\alpha(t, \boldsymbol{Y}))dt + g(Y^1_T,\nu^n_T)\right] \\
	& = \E\left[\int_0^T f(t,\widetilde{Y}^1_t,\widetilde{\nu}^n_t,\widehat\alpha(t,\bm{\widetilde{Y}}))dt + g(\widetilde{Y}^1_T,\widetilde{\nu}^n_T)\right] \\
	&= J^n_1(\widehat\alpha,\alpha^2,\ldots,\alpha^n) \\
	&\le J^n_1(\alpha^1,\ldots,\alpha^n)  + \epsilon.
\end{align*}
The last inequality follows from the facts that $\widehat\alpha \in \A_n$ is a closed-loop control and $(\alpha^1,\ldots,\alpha^n)$ is a closed-loop $\epsilon$-Nash equilibrium. \hfill\qedsymbol

\section{A result on strong propagation of chaos}
In this short section, we recall a propagation of chaos result of \cite{lacker2018strong} that will be useful in the proof of Theorem \ref{th:converselimit-strongMFE}. Let $\sigma\in\R^{d\times d}$ be a non-degenerate matrix and $b:[0, T] \times \R^d \times \P(\R^d)\mapsto \R^d$ be a measurable function. Recall that unless stated otherwise the space $\P(\R^d)$ is equipped with the usual weak topology and the corresponding Borel $\sigma$-field. We consider a weak solution $\bm{X}^n = (X^{n,1}, \ldots, X^{n,n})$ to the SDE
\begin{align}
d X_t^{n,i} = b(t, X_t^{n,i}, \mu_t^n)dt + \sigma dW_t^i, \quad \mu^n=\bigg(\mu^n_t = \frac{1}{n} \sum_{k=1}^n \delta_{X^{n,k}_t}\bigg)_{t \in [0,T]},
\label{eq:particle-system-strong-prop-chaos-result}
\end{align}
starting with i.i.d positions $X_0^{n,1}, \ldots, X^{n,n}$, with independent Brownian Motions $W^1, \ldots, W^n$. 
Let $\mu$ be a solution of the associated McKean-Vlasov equation:
\begin{align}
d X_t = b(t,X_t,\mu_t)dt + \sigma d W_t, \quad \mu=(\mu_t = \L(X_t))_{t \in [0,T]}.
\label{eq:McKean-Vlasov-strong-prop-chaos-result}
\end{align}
We have the following theorem, taken from \cite{lacker2018strong}. 
\begin{theorem}\label{ap:th:strong-propagation-of-chaos}
Suppose that $b$ is jointly measurable and bounded. Suppose moreover that there exists $c > 0$ such that, for each $(t,x) \in [0,T] \times \R^d$ and $m,m' \in \P(\R^d)$, we have
\begin{align}
|b(t,x,m) - b(t,x,m')| \le c \|m-m'\|_{\mathrm{TV}}.
\label{eq:Lipschitz-theorem-strong-prop-chaos-result}
\end{align}
Then, there exists a unique in law weak solution $\bm{X}^n$ to the SDE \eqref{eq:particle-system-strong-prop-chaos-result} and a unique in law weak solution $\mu \in C([0,T];\P(\R^d))$ to the McKean-Vlasov equation \eqref{eq:McKean-Vlasov-strong-prop-chaos-result}. Moreover, $\mu^n$ converges in probability to $\mu$, and also $\langle\mu_t^n, \varphi\rangle \to \langle\mu_t, \varphi\rangle$ in probability for each $t \in [0,T]$ and each bounded measurable function $\varphi : \R^d\to\R$.
\end{theorem} 
The claimed existence and uniqueness for \eqref{eq:particle-system-strong-prop-chaos-result} follow from Appendix \ref{ap:SDE}. The existence and uniqueness for \eqref{eq:McKean-Vlasov-strong-prop-chaos-result} is shown in \cite[Theorem 2.4]{lacker2018strong}. The final conclusions come from \cite[Theorem 2.6, Remark 2.8]{lacker2018strong}.

\section{Proof of Superposition Theorem}\label{ap:se:superposition} 
In this section, we provide a proof for Theorem \ref{th:superposition}, which is a simplification of the results of \cite[Theorem 1.3]{superposition_theorem}. We also report a useful Lemma that can be found in that same paper, and that we use several times. 
We use the notation $\G_1 \indep \G_2 \,|\, \G_3$ to mean that  $\G_1$ and $\G_2$ are conditionally independent given $\G_3$. 
\begin{lemma}\cite[Lemma 2.1]{superposition_theorem}\label{ap:le:technical-compatibility-lemma}
Suppose $(\Omega, \FF = (\F_t)_{t\in[0,T]}, \PP)$ is a filtered probability space supporting a $d$-dimensional $\FF$-Brownian motion $W$ as well as two subfiltrations $\GG = (\G_t)_{t\in[0,T]}$ and $\HH = (\H_t)_{t\in[0,T]}$. Assume $W$ is independent of $\G_T$. Then the following two statements are
equivalent:
\begin{enumerate}
\item[(1)] $\H_t \indep \F_T^W \vee \G_T \,|\, \F_t^W \vee \G_t$, for each $t\in[0, T]$.
\item[(2)] The following two conditions hold:
\begin{enumerate}
\item[(2a)] $W$ is a Brownian motion with respect to the filtration $(\G_T \vee \F^W_t \vee \H_t)_{t\in[0,T]}$.
\item[(2b)] $\H_t \vee \F^W_t \indep \G_T \,|\, \G_t$, for all $t \in [0, T]$.
\end{enumerate}
\end{enumerate}
If either one of the conditions (1) or (2) hold, then we have 
\begin{enumerate}
\item[(3)] $\H_t \indep \G_T \,|\, \G_t$, for each $t \in [0, T]$.
\end{enumerate}
\end{lemma}
\noindent\textbf{Proof of Theorem \ref{th:superposition}}
Note that the $\sigma$-field $\F^{\nu,\mu,B}_T$ is countably generated because $(\nu,\mu,B)$ takes values in a Polish space. We may thus apply \cite[Theorem 1.3]{superposition_theorem} to get an extension $(\widetilde\Omega,\widetilde\F,\widetilde\FF,\widetilde\PP)$ of the original filtered probability space $(\Omega,\F,\FF^{\nu,\mu,B},\PP)$ supporting a $d$-dimensional $\widetilde\FF$-Brownian motion $W$ and a continuous $\R^d$-valued $\widetilde\FF$-adapted process $X$, satisfying the following:
\begin{itemize}
\item $B$ is an $\widetilde\FF$-Brownian motion.
\item $X_0$, $W$, and $(B,\mu)$ are independent.
\item We have $X_0 \sim \lambda$, and the SDE holds
\begin{align}
dX_t = b(t,X_t,\mu_t,\alpha(t,X_t,\mu,B))dt + \sigma dW_t + \gamma dB_t, \label{eq:superpositionSDE}
\end{align}
with $\nu_t=\L(X_t\,|\,\F^{\nu,\mu,B}_t)=\L(X_t\,|\,\F^{\nu,\mu,B}_T)$ a.s., for each $t \in [0,T]$.
\item $\widetilde\F_t \indep \F^{\nu,\mu,B,W}_T \,|\, \F^{\nu,\mu,B,W}_t$ for each $t \in [0,T]$.
\end{itemize}
The final condition implies that $\F^X_t \indep \F^{\nu,\mu,B,W}_T \,|\, \F^{\nu,\mu,B,W}_t$ for each $t \in [0,T]$. Applying (1$\Rightarrow$2a) of Lemma \ref{ap:le:technical-compatibility-lemma} with $\HH$ therein taken to be $\FF^X$ and $\GG$ taken to be $\FF^{\nu,\mu,B}$, we find that $W$ is a Brownian motion with respect to the filtration $(\F^{\nu,\mu,B}_T \vee \F^{W,X}_t)_{t \in [0,T]}$. This easily implies that $W$ is a $\FF^{W,X}$-Brownian motion under the conditional measure $\widetilde\PP(\cdot\,|\,\nu,\mu,B)$, a.s. We may then apply Lemmas \ref{ap:le:SDE-uniq} and \ref{ap:le:SDE-uniq-strong} with  $\eta=(\nu,\mu,B)$ (similarly to Remark \ref{re:ap:SDEs-MFG}) to deduce that $X$ is necessarily the unique strong solution of the SDE \eqref{eq:superpositionSDE}, and  in particular $X$ is a.s.\ $\FF^{X_0,\nu,\mu,B,W}$-adapted. We may thus reduce from $\widetilde\FF$ to $\FF^{X_0,\nu,\mu,B,W}$, and the claim follows. \hfill \qed
\end{appendix}

\bibliographystyle{amsplain}
\bibliography{common-noise-bib}

\end{document}